\documentclass[a4paper, 11pt]{article}
\usepackage{amsmath, amssymb,amscd, amsthm}
\usepackage[mathscr]{eucal}
\usepackage{graphics}
\usepackage{fullpage}
\newcommand\cyr{%
 \renewcommand\rmdefault{wncyr}%
 \renewcommand\sfdefault{wncyss}%
 \renewcommand\encodingdefault{OT2}%
\normalfont\selectfont} \DeclareTextFontCommand{\textcyr}{\cyr}

\newtheorem{theorem}{Theorem}
\newtheorem{lemma}[theorem]{Lemma}
\newtheorem{corollary}[theorem]{Corollary}
\newtheorem{proposition}[theorem]{Proposition}

\newtheorem{conjecture}[theorem]{Conjecture}

\newtheorem*{theorem3}{Theorem 3}

\def\N{\mathbb N}
\def\Z{\mathbb Z}

\def\T{\mathbb T}

\newcommand{\vectornorm}[1]{\left|\left|#1\right|\right|}

\begin{document}
\title{On the Structure of Sets of Large Doubling}
\author{Allison Lewko \thanks{Supported by a National Defense Science and Engineering Graduate Fellowship} \and Mark Lewko}
\date{}
\maketitle

\begin{abstract} We investigate the structure of finite sets $A \subseteq \Z$ where $|A+A|$ is large. We present a combinatorial construction that serves as a counterexample to natural conjectures in the pursuit of an ``anti-Freiman" theory in additive combinatorics. In particular, we answer a question along these lines posed by O'Bryant. Our construction also answers several questions about the nature of finite unions of $B_2[g]$ and $B^\circ_2[g]$ sets, and enables us to construct a $\Lambda(4)$ set which does not contain large $B_2[g]$ or $B^\circ_2[g]$ sets.
\end{abstract}

\section{Introduction}

Freiman's theorem~\cite{F1} states that if a finite set $A \subseteq \Z$ satisfies $|A+A| \leq \delta |A|$ for some constant $\delta$, then $A$ is contained in a generalized arithmetic progression of dimension $d$ and size $c |A|$, where $c$ and $d$ depend only on $\delta$ and not on $|A|$. One might then ask about the opposite extreme: if $|A+A| \geq \delta |A|^2$, what can one say about the structure of $A$ as a function only of $\delta$?
The natural candidate for the building blocks of such a theory are $B_2[g]$ sets (a set $S \subseteq \Z$ is a $B_2[g]$ set if any integer can be expressed in at most $g$ ways as a sum of two elements in $S$). It is clear that finite $B_2[g]$ sets are sets of large doubling, but to what extent can we describe all sets of large doubling in terms of $B_2[g]$ sets?

A first attempt at an anti-Freiman theory might be to guess that if $|A+A|\geq \delta |A|^2$ for some positive constant $\delta$, then $A$ can be decomposed into a union of $k$ $B_2[g]$ sets where $k$ and $g$ depend only on $\delta$. This is easily shown to be false. For example, one can start with a $B_2[1]$ set of $n$ elements, and take its union with an arithmetic progression with $n$ elements. One then obtains an $A$ such that $|A+A| \geq \delta |A|^2$ for some $\delta$ (independent of $n$), but the arithmetic progression contained in $A$ will not be decomposable into a union of $k$ $B_2[g]$ sets with $k$ and $g$ depending only on $\delta$ as $n$ tends infinity.

There are two ways we might try to fix this problem: first, we might ask only that $A$ \emph{contains} a $B_2[g]$ set of size $\delta' |A|$, where $\delta'$ and $g$ depend only on $\delta$ (this question was posed by O'Bryant in \cite{OB1}). Second, we might ask that $|A'+A'|\geq \delta |A'|^2$ hold for all subsets $A' \subseteq A$ for the same value of $\delta$. Either of these changes would rule out the trivial counterexample given above. However, even applying both of these modifications simultaneously is not enough to make the statement true. We provide a sequence of sets $W_{n,k} \subseteq \Z$ where $|W'+W'|\geq \delta |W'|^2$ holds for all of their subsets $W'$ for the same value of $\delta$, but if we try to express each $W_{n,k}$ as a union of $B_2[g]$ sets for a fixed $g$, we are forced to let the union size tend to infinity as $k$ tends to infinity. Our sequence of sets also fails to contain large $B_2[g]$ sets. (The parameter $n$ will be chosen sufficiently large with respect to $k$ and $g$ for each $k$. We include $n$ here for consistency with our later notation.)

Our initial sets $W_{n,k}$ are $B^\circ_2[2]$ sets (a set $S \subseteq \Z$ is a $B^\circ_2[g]$ set if any nonzero integer can be expressed in at most $g$ ways as a difference of two elements in $S$). This may lead one to make the following weaker anti-Freiman conjecture:
\begin{conjecture}
\label{con:anti-Freiman}
(Weak Anti-Freiman) Suppose that $A \subseteq \Z$ is a finite set that satisfies $|A'+A'|\geq \delta |A'|^2$ and $|A'-A'|\geq \delta |A'|^2$ for all subsets $A' \subseteq A$. Then $A$ contains either a $B_2[g]$ set or a $B^\circ_2[g]$ set of size $\geq \delta' |A|$, where $g$ and $\delta'$ depend only on $\delta$.
\end{conjecture}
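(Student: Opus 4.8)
\medskip
\noindent\emph{Proof approach.} Because this appears as a conjecture inside a paper built around counterexamples, my working hypothesis is that the intended outcome is a refutation, so the plan is to produce a family $A_1, A_2, \dots \subseteq \Z$ with a single constant $\delta > 0$ --- uniform over all subsets $A'$ and over both $|A'+A'|$ and $|A'-A'|$ --- such that, for every fixed $g$, the largest $B_2[g]$ or $B^\circ_2[g]$ subset of $A_k$ has size $o(|A_k|)$ as $k \to \infty$. Before committing to that I would first test the statement itself: the hypothesis is a strong ``uniformly pseudo-Sidon'' condition, and the natural hope is to upgrade it to a genuine $B_2[g]$ or $B^\circ_2[g]$ substructure of positive proportion by sorting the sums (resp.\ differences) of $A$ dyadically by representation count and running a pigeonhole or density-increment argument on the resulting layers. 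I expect this to fail, and hence the conjecture to be false, for the following reason: requiring $|A' + A'|, |A' - A'| \gg |A'|^2$ for every subset forbids large progression-like pieces, but it does not force the multiplicity of every individual sum or difference down to $O_\delta(1)$. Indeed, if $\{a_i\}$ is a Sidon set then $\{a_i\} \cup \{v - a_i\}$ has a sum of multiplicity $\sim |\{a_i\}|$ yet still has quadratic restricted sum and difference sets, and likewise $\{a_i\} \cup \{a_i - d\}$ has a difference of large multiplicity while keeping quadratic restricted doubling. The game is to glue together many such ``Sidon-based defects'' --- some killing $B_2[g]$, some killing $B^\circ_2[g]$ --- so that the global object still satisfies the uniform hypothesis.

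\medskip

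The construction I would aim for therefore layers $k$ such defects at widely separated scales, taking $n$ enormous relative to $k$ and $g$ so that collisions between scales are impossible. At each scale I would place a dilated copy of a small gadget that (i) is itself of the same ``uniformly pseudo-Sidon'' type, so that a subset which is dense in one scale is automatically forced to have quadratic sum and difference sets there; (ii) carries, in a Sidon-based way as above, a sum of multiplicity tending to infinity \emph{and} a difference of multiplicity tending to infinity, so it is neither $B_2[g]$ nor $B^\circ_2[g]$ for any bounded $g$; while (iii) the transversal data --- which element one selects at each scale --- is governed by a $B^\circ_2[2]$ set of the $W_{n,k}$ type, so a subset spread thinly across scales still has quadratic sum and difference sets. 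A subset $A' \subseteq A_k$ of proportion $\ge \delta'$ must then either meet a single scale in a constant proportion of that scale's block, in which case the gadget there already contradicts the $B_2[g]$ or $B^\circ_2[g]$ property (a $B_2[g]$ or $B^\circ_2[g]$ set of size $m$ inside an interval of length $L$ needs $g \gtrsim m^2/L$, so positive density in a long block is incompatible with a fixed $g$), or it spreads across many scales, where the $B^\circ_2[2]$ transversal keeps it away from both properties. The parameter $k$ is precisely what forces the largest $B_2[g]$ or $B^\circ_2[g]$ subset down to $o(|A_k|)$, and what forces any decomposition of $A_k$ into $B_2[g]$ sets (for fixed $g$) to use a number of pieces tending to infinity.

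\medskip

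The main obstacle is the tension already exposed by the trivial counterexample to the naive conjecture: the cheapest way to destroy $B_2[g]$ and $B^\circ_2[g]$ is to insert arithmetic progressions, but then any subset lying inside a progression has linear, not quadratic, sumset and breaks the uniform hypothesis. So every defect must be Sidon-based rather than progression-based, every large subset must inherit \emph{both} a $B_2[g]$-defect and a $B^\circ_2[g]$-defect, and the whole arrangement must still have restricted doubling bounded below \emph{uniformly} in $k$ and $n$. Establishing this last point is where essentially all the work lies: one must double count the representations of an arbitrary sum or difference and show that collisions within a single scale and collisions across scales together only degrade the count by a bounded factor, so that $\delta$ does not drift to $0$ as $k \to \infty$. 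I expect the combinatorial skeleton --- $k$ scales, a $B^\circ_2[2]$ transversal of $W_{n,k}$ type, Sidon-based gadgets at each scale --- to be comparatively clean, and this representation bookkeeping, together with the choice of how fast the defect multiplicities and the separation of scales should grow with $k$, to be the technical heart. As a byproduct one should then record, using Rudin's theorem that a finite union of $B_2[g]$ sets is $\Lambda(4)$ together with the union structure of the $A_k$, the promised $\Lambda(4)$ set that contains no large $B_2[g]$ or $B^\circ_2[g]$ set.
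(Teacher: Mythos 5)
You correctly read the surrounding text and aim at a refutation, and you correctly pick out several of the right ingredients: a family indexed by $k$ with a uniform doubling constant, the need to avoid progression-based defects, a role for the $W_{n,k}$-type object, and a $\Lambda(4)$ argument to get the uniform lower bound on restricted sums and differences. However, your construction sketch departs from the paper's and a couple of the steps as written would not go through.

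The paper does not layer dilated gadgets at separated scales. It takes a direct product: $W^\circ_{n,k} \times W_{n,k} \subseteq \Z^2$, where $W_{n,k}$ is a $B^\circ_2[2]$ set that is a union of $k$ $B_2[1]$ sets, and $W^\circ_{n,k}$ is a dual object, a $B_2[2]$ set that is a union of $k$ $B^\circ_2[1]$ sets. Both are built from a Hadamard-matrix family $v_1,\dots,v_k \in \{1,-1\}^d$ together with a Reed--Solomon (Vandermonde) index set $S$, and base-$5$ encoding; these error-correcting codes are what pin down the multiplicities exactly. The two kinds of badness are then cleanly separated into the two coordinates of $\Z^2$: any large subset must be dense in some row $a \times W_{n,k}$, where one finds many repeated sums (killing $B_2[g]$), and dense in some column $W^\circ_{n,k} \times b$, where one finds many repeated differences (killing $B^\circ_2[g]$). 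This counting argument is Theorem~\ref{nolargeb2}. The counterexample is then transported to $\Z$ by an $F_2$-isomorphism (Lemma~\ref{F2iso}), which preserves both the $\Lambda(4)$ constant and the $B_2[g]$/$B^\circ_2[g]$ properties. Your multi-scale-with-transversal scheme would have to do all of this intertwined in one copy of $\Z$, which is considerably harder to book-keep, especially since each of your ``Sidon-based defect'' gadgets kills only one of $B_2[g]$ or $B^\circ_2[g]$ and is good on the other.

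Two specific points in your sketch are genuine gaps. First, the step where a subset that hits one scale in constant proportion contradicts $B_2[g]$ because ``a $B_2[g]$ set of size $m$ inside an interval of length $L$ needs $g \gtrsim m^2/L$'' is self-defeating: for that inequality to force $g\to\infty$ you need $L = o(m^2)$, but then $|A'+A'| \le 2L = o(|A'|^2)$ for that same subset, so $A'$ already violates the hypothesis of the conjecture and imposes no constraint. In the paper the sets sit in intervals of length roughly $5^{nd}$, vastly larger than $|W_{n,k}|^2$; what contradicts $B_2[g]$ is not interval density but the fact (coming from the code structure) that each tuple in $S$ produces one of only $\binom{k}{2} n^{\lceil d/2\rceil -1}$ possible sum values, while the number of tuples grows like $n^{\lceil d/2\rceil}$. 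Second, your plan to obtain a $\Lambda(4)$ constant, and hence the uniform doubling lower bound, from ``Rudin's theorem that a finite union of $B_2[g]$ sets is $\Lambda(4)$ together with the union structure of the $A_k$'' does not give a constant independent of $k$: the union-of-$k$-pieces triangle-inequality bound degrades linearly in $k$. The paper instead observes that $W_{n,k}$ is globally a $B^\circ_2[2]$ set and $W^\circ_{n,k}$ is globally a $B_2[2]$ set, so each is $\Lambda(4)$ with constant bounded independently of $k$, and then invokes the product lemma (Lemma~\ref{LambdaProd}) and Lemma~\ref{LambdatoSum} to get $|W'\pm W'|\ge \delta|W'|^2$ with $\delta$ uniform in $k$ and $n$. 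This $k$-independence is exactly what prevents $\delta$ from drifting to $0$ and is the point of constructing $W$ so carefully that it is not just a union of $B_2[1]$ sets but simultaneously a $B^\circ_2[2]$ set.
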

We show that even this very weak conjecture is false.

Our approach to obtaining a counterexample starts with constructing a union of $k$ $B_2[g]$ sets that cannot be decomposed as a union of $k-1$ $B_2[g']$ sets for any $g'$. This is related to a problem previously studied, with the roles of $k$ and $g$ reversed: Erd\H{o}s and Newman \cite{EO1} independently conjectured that for every $g\geq 2$, there exists a $B_{2}[g]$ set that is not a finite union of $B_{2}[g-1]$ sets. Erd\H{o}s \cite{EO1} established the conjecture for certain values of $g$ using Ramsey theory, and Ne\v{s}etril and R\"{o}dl~\cite{NE1} proved the conjecture for all values of $g$ using arguments based on Ramsey graphs. Instead of considering $B_2[g]$ sets that are not finite unions of $B_2[g-1]$ sets, we fix $g=1$ and for each $k$, we construct a union of $k$ $B_2[1]$ sets that is not a union of $k-1$ $B_2[g']$ sets for any $g'$. The key feature of our construction is that we can precisely control the form of the repeated sums (elements $a,b,c,d$ in our set such that $a + b = c+d$) and repeated differences ($a-b = c-d$), which allows us to keep the sumsets large as we let the union size $k$ tend to infinity.

Our construction is an explicit combinatorial object with many interesting properties, answering several questions about the nature of finite unions of $B_2[g]$ and $B^\circ_2[g]$ sets. In particular, for each positive integer $k \geq 5$, we construct:
\begin{enumerate}
  \item a $B^\circ_2[2]$ set in $\Z$ which is a union of $k$ $B_2[1]$ sets and cannot be decomposed as a union of $k-1$ $B_2[g]$ sets for any $g$
  \item a $B_2[2]$ set in $\Z$ which is a union of $k$ $B^\circ_2[1]$ sets and cannot be decomposed as a union of $k-1$ $B^\circ_2[g]$ sets for any $g$
  \item a set in $\Z^2$ which is a direct product of a $B_2[2]$ set in $\Z$ and a $B^\circ_2[2]$ set in $\Z$ and which cannot be expressed as a mixed union of $\frac{k}{3}-1$ $B^\circ_2[g]$ and $B_2[g]$ sets in $\Z^2$
\end{enumerate}
(we say mixed union to simply mean that the union can include \emph{both} $B_2[g]$ and $B^\circ_2[g]$ sets).

In \cite{E2}, Erd\H{o}s and S\'{o}s asked if there is a $B^\circ_2[g]$ set which is not a finite union of $B_2[1]$ sets. By a standard argument, our finite $B^\circ_2[2]$ sets for each $k$ can be combined to yield an infinite $B^\circ_2[2]$ set which is not a finite union of $B_2[g]$ sets for any $g$, which provides an answer to this question. In contrast, note that any $B^\circ_2[1]$ set is also a $B_2[1]$ set.

\subsection{Connection to $\Lambda(4)$ sets}
There is a connection between sets of large doubling and $\Lambda(4)$ sets, as illustrated in Lemma \ref{LambdatoSum}. If $S$ is a $\Lambda(4)$ set, then $|A+A|\geq \delta |A|$ holds for all finite subsets $A$ of $S$ where $\delta$ depends only on $S$, and not on the choice of $A$. In his 1960 paper \cite{RU1}, Rudin asked if every $\Lambda(2h)$ set is a finite union of $B_h[g]$ sets (for definitions of $\Lambda(2h)$ sets and $B_h[g]$ sets, see subsection~\ref{sec:definitions}). Rudin's question is natural because any finite union of $B_h[g]$ sets is a $\Lambda(2h)$ set, and most known examples of $\Lambda(2h)$ sets are constructed as finite unions of $B_h[g]$ sets.

Meyer \cite{M1} demonstrated a negative answer to Rudin's question by constructing a set $E \subseteq \Z$ which is a $\Lambda(p)$ set for all $p>2$ and is not a finite union of $B_2[g]$ sets. He let $t_0, t_1, t_2, \ldots $ denote a sequence such that $t_{n+1} \geq 3 t_n$ for all $n$ and let $E := \{ t_n - t_m | 0 \leq m < n\}$. To see this is not a finite union of $B_2[g]$ sets for any $g$, Meyer considers sums of the form:
\[ (t_i - t_j) + (t_j - t_\ell) = t_i - t_\ell,\]
where $\ell < j< i$. Meyer's argument proceeds via a recurrence argument. Alternatively, one can use Ramsey's theorem. We suppose that $E$ is the union of $B_2[g]$ sets $G_1, \ldots, G_k$ for some finite values $g, k$, and we derive a contradiction. We color the pairs of natural numbers with $k$ colors by giving $(i, j)$ the color $c$ when $t_i - t_j \in G_c$ (for $i > j$). A general version of Ramsey's Theorem (which can be found in \cite{D1}, for example) says that there must be an infinite monochromatic set $M \subseteq \mathbb{N}$ (meaning that all pairs $(i,j)$ for $i,j \in M$ have the same color). If we take $\ell, i \in M$ such that there are more than $g$ values $j$ such that $\ell < j < i$ and $j \in M$, then we have more than $g$ ways of representing $t_i - t_\ell$ as sum of two elements from the set $G_c$, where $c$ is the color of $M$. This contradicts that $G_c$ is a $B_2[g]$ set.

Meyer's set $E$ is not a finite union of $B_2[g]$ sets for any $g$, yet for some fixed $\delta$, $|A + A| \geq \delta |A|^2$ for all finite $A \subset E$. However, this does not contradict our weak anti-Freiman conjecture, since finite subsets $A \subseteq E$ may still \emph{contain} large $B_2[g]$ sets. More concretely, if we take $t_n = 5^n$ for all $n$, and $A$ is any finite subset of $E = \{t_n - t_m | 0 \leq m < n\}$, then $A$ must contain a $B_2[2]$ set of size at least $\frac{1}{4} |A|$. To see this, we partition the values $\{t_i\}$ into two disjoint sets: $U$ and $L$. We consider the subset $A'$ of $A$ consisting of values $t_i - t_j$ where $t_i \in U$ and $t_j \in L$. A sum of any two such values, e.g. $t_i - t_j + t_{i'} - t_{j'}$ for $t_i, t_i' \in U$, $t_j, t_j' \in L$, will involve no cancelation because $\{i,i'\} \cap \{j, j'\} = \emptyset$. Since base 5 expansions of integers with coefficients in $\{-2, -1, 0, 1, 2\}$ are unique, we will be able to determine the sets $\{i,i'\}$ and $\{j, j'\}$ from the value of the sum. This leaves only two possible ways of expressing the value as a sum of two elements in $A'$: $(t_i - t_j) + (t_{i'}- t_{j'})$ or $(t_i - t_{j'}) + (t_{i'} - t_j)$. Now, if we independently place each $t_i$ in either $U$ or $L$ randomly (probability 1/2 for each), each element $t_i - t_j$ of $A$ will have probability $\frac{1}{4}$ of ending up in $A'$. By linearity of expectation, this means the expected size of $A'$ is $\frac{1}{4} |A|$. Hence, there must be a choice of $U$ and $L$ for which $|A'| \geq \frac{1}{4} |A|$.

In \cite{AE}, Alon and Erd\H{o}s asked if there exists a set $E$ such that for some fixed $\delta >0$, every finite subset $A \subset E$ contains a $B_2[1]$ set of size at least $\delta |A|$, but $E$ is not a finite union of $B_2[1]$ sets. In \cite{ENR}, Erd\H{o}s, Ne\v{s}etril, and R\"{o}dl constructed such a set using sophisticated techniques. Meyer's set is a simpler construction which has a similar property: we have shown that its subsets contain large $B_2[2]$ sets instead of $B_2[1]$ sets.

Our techniques also give a $\Lambda(4)$ set which is not a finite union of $B_2[g]$ sets, and in fact we obtain a stronger negative result for $\Lambda(4)$ sets. We note that it is natural to consider not only $B_2[g]$ sets, but also $B^\circ_2[g]$ sets, since these are $\Lambda(4)$ sets as well. In light of Meyer's result, one may ask the weaker question: \emph{Does a $\Lambda(4)$ set at least contain a large $B_{2}[g]$ or $B_{2}^{\circ}[g]$ set?} A precise version of this question is stated below (see Theorem \ref{thm:lambda4}). This statement is suggested by the following connection with Sidon sets.

Notice that there is no interesting notion of a $\Lambda(\infty)$ set, since a subset of $\Z$ will be a $\Lambda(\infty)$ set (with the obvious extension of our definition below) if and only if it is finite. However, an often useful substitute for $\Lambda(\infty)$ sets are Sidon sets (Sidon sets are a name also attached to $B_{2}[1]$ sets, but we do not use that convention here). These are sets $S \subset \Z$ satisfying

\[\sum_{\xi \in S}|\hat{f}(\xi)| \leq K_{\infty}(S) \vectornorm{\sum_{\xi \in S}\hat{f}(\xi)e(\xi  x)}_{L^{\infty}},\]
where $K_{\infty} (S)$ is a constant depending on the set $S$.

Clarifying our assertion that Sidon sets play the role of $\Lambda(\infty)$ sets, Pisier \cite{PI1} has shown that $S$ is a Sidon subset of $\Z$ if and only if $\sup_{p>2}\frac{K_{p}(S)}{\sqrt{p}} < \infty$. This can be used to show that finite unions of Sidon sets are Sidon sets. We call a set $S$ independent if, for any distinct set of elements, say $\{s_{1},s_{2},\ldots,s_{h}\}$, there is no choice of $+$'s and $-$'s for each $s_i$ such that

\[\pm s_{1} \pm s_{2} \pm \ldots \pm s_{h} =0. \]

One can show that an independent set is a Sidon set, and hence finite unions of independent sets are Sidon sets. One will notice that the definition of independent is somewhat like a limiting case of the condition that the number of representations of an integer as a sum of $h$ elements of the set (and certain generalizations of this) be bounded as $h$ tends to infinity. In the Sidon setting, an obvious analog of Rudin's question is: \emph{Is every Sidon set a finite union of independent sets?} This question is open (although some progress has been made in other groups), however Pisier has shown that a Sidon set must contain a large independent set in the following sense:

\begin{theorem}If $S \subset \Z$ is a Sidon set, then there exists a constant $\delta > 0$ so that for every finite subset $A \subset S$, there is an independent set $I \subseteq A$ satisfying $|I| \geq \delta |A|$.
\end{theorem}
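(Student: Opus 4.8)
The natural route is through Pisier's arithmetic characterization recalled just above: $S$ is Sidon if and only if $M:=\sup_{p>2}K_p(S)/\sqrt p<\infty$. This bound is inherited, with the same $M$, by every finite subset of $S$, and a set fails to be independent exactly when it carries a nontrivial relation $\sum_{s\in T}\varepsilon_s s=0$ with $\emptyset\neq T$ and $\varepsilon\in\{-1,+1\}^T$ (we may assume $0\notin A$). Hence the theorem reduces to the purely finite, quantitative assertion: there is $\delta=\delta(M)>0$ such that every finite $A\subseteq S$ with $|A|=N$ contains an independent subset of cardinality at least $\delta N$.

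The first thing I would try is a probabilistic extraction: keep each $s\in A$ independently with some probability $\theta$, producing a random set $B$ with $\mathbb{E}|B|=\theta N$, and then delete one element from each minimal relation that survives in $B$. To bound the expected number of survivors one feeds the $K_p$ estimate through the polynomial $f(x)=\sum_{s\in A}e(sx)$: since $\|f\|_{L^{2k}(\T)}\le M\sqrt{2k}\,N^{1/2}$ and $\|f\|_{2k}^{2k}$ counts solutions of $s_1+\cdots+s_k=s'_1+\cdots+s'_k$ inside $A$, a Cauchy--Schwarz argument bounds the number $L_t$ of length-$t$ relations in $A$ by roughly $(CM^2tN)^{t/2}$. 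One can also package the whole weighted count as a Riesz-type identity, $1+\sum_{t\ge 2}L_t\theta^t=\int_{\T}\prod_{s\in A}\bigl(1+2\theta\cos 2\pi sx\bigr)\,dx$, which collapses to exactly $1$ when $A$ is dissociate.

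The main obstacle is that this is genuinely not enough. The single-moment bound controls short relations but is far too wasteful for long ones, and optimizing the scheme above only yields an independent subset of size about $N^{1/2}$, not a proportional one; moreover no argument using merely a $\Lambda(4)$ bound can succeed, since the constructions in the body of this paper show that $\Lambda(4)$ sets need not contain large $B_2[g]$ sets at all. Producing a \emph{proportional} subset forces one to exploit the $K_p$ estimate for every $p$ simultaneously --- equivalently, the subgaussian behaviour of all trigonometric polynomials spectrally supported in $S$, or Rider's form $\mathbb{E}_\varepsilon\bigl\|\sum_{s\in A}\varepsilon_s e(sx)\bigr\|_{L^\infty(\T)}\lesssim_M N^{1/2}$ of the characterization --- and this is the technical heart of Pisier's theorem: the subset must be extracted by a more delicate (adaptive or iterative) selection whose analysis rests on the Rademacher-process machinery underlying Rider's theorem rather than on any single $L^p$ computation. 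A final concentration argument for $|B|$ and for the number of relations surviving the deletion step then converts the resulting expectation estimates into the existence of the desired independent set $I$.
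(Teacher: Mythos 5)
The paper does not prove this theorem at all: it is Pisier's result, cited from~\cite{PI1} and stated purely as background to motivate the question of whether the analogous statement holds for $\Lambda(4)$ sets (which the paper then refutes). So there is no ``paper's own proof'' to compare against.

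As for your proposal: what you have written is an accurate and well-informed \emph{discussion} of the theorem, but it is not a proof, and you say so yourself. You correctly reduce to a quantitative statement depending only on $M = \sup_{p>2}K_p(S)/\sqrt{p}$, correctly observe that the random-subset-plus-deletion scheme, fed only the $L^{2k}$ moment bounds (equivalently, a fixed-$p$ $\Lambda(p)$ hypothesis), caps out around $|I| \gtrsim N^{1/2}$ rather than $\delta N$, and you make the sharp and relevant observation that the paper's own construction of $\Lambda(4)$ sets with no large $B_2[g]$ subsets shows a $\Lambda(4)$ hypothesis alone \emph{cannot} yield a proportional independent subset --- so any proof must genuinely use the uniformity over all $p$. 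The Riesz-product identity $1+\sum_{t\geq 2} L_t\theta^t = \int_{\T}\prod_{s\in A}(1+2\theta\cos 2\pi s x)\,dx$ you record is also the right object to have in mind. But the sentence ``this is the technical heart of Pisier's theorem'' followed by an appeal to ``Rademacher-process machinery underlying Rider's theorem'' is exactly where a proof would have to begin, and you stop there. The entire subgaussian/metric-entropy selection argument (or, in Pisier's original formulation, the iterative extraction using the $\psi_2$ bound on random trigonometric polynomials) is omitted. In short: you have correctly diagnosed why the easy approaches fail and correctly located the missing ingredient, but the gap is precisely that missing ingredient, which is the whole content of the theorem.
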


In light of Pisier's theorem, one might ask if it is the case that a $\Lambda(4)$ set must contain a large $B_{2}[g]$ or $B_{2}^{\circ}[g]$ set. We show that the analog of Pisier's theorem fails in the $\Lambda(4)$ setting:

\begin{theorem}\label{thm:lambda4} There exists a $\Lambda(4)$ set $S \subset \Z$ such that for any fixed choice of $\delta>0$ and $g$, there exists a finite subset $A$ of $S$ such that no subset $A'$ of $A$ satisfying $|A'| \geq \delta |A|$ is a $B_{2}[g]$ or $B_{2}^{\circ}[g]$ set.
\end{theorem}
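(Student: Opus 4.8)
The plan is to obtain $S$ by placing Freiman-isomorphic copies of the two-dimensional sets constructed in item~(3) of the list above into widely separated intervals of $\Z$. Recall from Lemma~\ref{LambdatoSum} that $S\subseteq\Z$ is $\Lambda(4)$ exactly when there is a constant $C$ with $E(A):=\#\{(a,b,c,d)\in A^4:a+b=c+d\}\le C|A|^2$ for every finite $A\subseteq S$. For each $k\ge 5$ let $T_k=X_k\times Y_k\subseteq\Z^2$ be the item-(3) set with parameter $k$, where $X_k$ is the item-(2) set (a $B_2[2]$ set) and $Y_k$ is the item-(1) set (a $B^\circ_2[2]$ set). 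Fix a Freiman isomorphism of order $2$ from $T_k$ onto a subset $\widetilde T_k\subseteq[0,N_k]$ of $\Z$ (which exists since $T_k$ is finite, e.g.\ via $(x,y)\mapsto x+My$ for $M$ large), and pick a sequence $m_5<m_6<\cdots$ growing sufficiently rapidly (relative to the $N_k$) that the intervals $[\,m_{k_1}+m_{k_2},\ m_{k_1}+m_{k_2}+N_{k_1}+N_{k_2}\,]$ with $k_1\le k_2$ are pairwise disjoint. Put $S:=\bigcup_{k\ge 5}(\widetilde T_k+m_k)$.

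First I would establish a \emph{uniform per-block energy bound}: there is an absolute constant $C_0$ such that $E(Z)\le C_0|Z|^2$ for \emph{every} subset $Z\subseteq X_k\times Y_k$ and every $k$. Writing $E(Z)=\sum_{\delta\in\Z^2}r_{Z-Z}(\delta)^2$ with $r_{Z-Z}(\delta)=\#\{(z,z')\in Z^2:z-z'=\delta\}$ and $Z_y=\{x:(x,y)\in Z\}$, split the sum over $\delta=(\delta_1,\delta_2)$ by whether $\delta_2=0$ or not. For $\delta_2=0$ we have $r_{Z-Z}((\delta_1,0))=\sum_y r_{Z_y-Z_y}(\delta_1)$, and since each $Z_y$ is a subset of the $B_2[2]$ set $X_k$ it is itself $B_2[2]$, so $\sum_{\delta_1}r_{Z_y-Z_y}(\delta_1)^2=E(Z_y)\le C_1|Z_y|^2$; Cauchy--Schwarz in $\delta_1$ then gives $\sum_{\delta_1}\big(\sum_y r_{Z_y-Z_y}(\delta_1)\big)^2\le\sum_{y,y'}\sqrt{E(Z_y)E(Z_{y'})}\le C_1|Z|^2$. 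For $\delta_2\neq0$ the $B^\circ_2[2]$ property of $Y_k$ bounds by $2$ the number of pairs $(y,y')$ with $y-y'=\delta_2$, so $r_{Z-Z}((\delta_1,\delta_2))$ is a sum of at most two cross-counts $r_{Z_y,Z_{y'}}(\delta_1)=\#\{(a,a')\in Z_y\times Z_{y'}:a-a'=\delta_1\}$; squaring, summing over $\delta_1$ (bounding each resulting cross-energy by $\sqrt{E(Z_y)E(Z_{y'})}\le C_1|Z_y||Z_{y'}|$), and summing over $\delta_2\neq0$ (which reorganizes into the sum over pairs $y\neq y'$) gives a contribution $\le 2C_1|Z|^2$. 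Hence $E(Z)\le 3C_1|Z|^2=:C_0|Z|^2$, and since Freiman $2$-isomorphisms and translations preserve $E$, the same bound holds for every subset of each block $\widetilde T_k+m_k$.

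Next I would combine the blocks. Given a finite $A\subseteq S$, write $A=\bigsqcup_k A_k$ with $A_k\subseteq\widetilde T_k+m_k$. By the separation of the intervals, $a+b=c+d$ with $a,b,c,d\in A$ forces the block-index multisets $\{k(a),k(b)\}$ and $\{k(c),k(d)\}$ to coincide. Solutions contained in a single block $k$ contribute $\sum_k E(A_k)\le C_0\sum_k|A_k|^2\le C_0|A|^2$. A solution split between two blocks $i\neq j$ yields, after pairing the two entries in block $i$ against each other and likewise for block $j$, an identity $z-z'=w-w'$ with $z,z'\in A_i$ and $w,w'\in A_j$; for each of the two orientations the number of such is $\sum_\delta r_{A_i-A_i}(\delta)r_{A_j-A_j}(\delta)\le\sqrt{E(A_i)E(A_j)}\le C_0|A_i||A_j|$, so the split contribution is $\le 2C_0\sum_{i\neq j}|A_i||A_j|\le 2C_0|A|^2$. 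Therefore $E(A)\le 3C_0|A|^2$, and $S$ is $\Lambda(4)$.

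For the second assertion, fix $\delta>0$ and $g$. A translation together with the inverse of a Freiman $2$-isomorphism takes $B_2[g]$ sets to $B_2[g]$ sets and $B^\circ_2[g]$ sets to $B^\circ_2[g]$ sets (and conversely), so it suffices to find $k$ for which $T_k=X_k\times Y_k$ has no $B_2[g]$ or $B^\circ_2[g]$ subset of size $\ge\delta|T_k|$; then $A:=\widetilde T_k+m_k$ is the required finite subset of $S$. If $P\subseteq X_k\times Y_k$ is $B_2[g]$, then for each fixed $x_0$ the column slice $\{y:(x_0,y)\in P\}$ is a $B_2[g]$ subset of $Y_k$, since any two of its elements $(x_0,y_1),(x_0,y_2)$ give the representation $(2x_0,y_1+y_2)$ and all representations of $y_1+y_2$ inside the slice inject into the at most $g$ representations of $(2x_0,y_1+y_2)$ in $P$; as $Y_k$ (item~(1), parameter $k$) contains no $B_2[g]$ set larger than a fraction $\varepsilon(k,g)|Y_k|$ with $\varepsilon(k,g)\to0$ as $k\to\infty$, this forces $|P|\le|X_k|\cdot\varepsilon(k,g)|Y_k|=\varepsilon(k,g)|T_k|$. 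Symmetrically, every $B^\circ_2[g]$ subset $Q\subseteq X_k\times Y_k$ has each row slice a $B^\circ_2[g]$ subset of the item-(2) set $X_k$, which contains no large $B^\circ_2[g]$ sets, giving $|Q|\le\varepsilon'(k,g)|T_k|$ with $\varepsilon'(k,g)\to0$; choosing $k$ with $\max(\varepsilon(k,g),\varepsilon'(k,g))<\delta$ finishes the argument. The step I expect to demand the most care is the uniform per-block energy bound --- in particular keeping $C_0$ independent of $k$, so that the infinite union of blocks of unbounded internal complexity stays $\Lambda(4)$ --- together with checking that the sets $T_k$ supplied by the construction really carry all the needed features at once (a product structure $B_2[2]\times B^\circ_2[2]$ that controls the energy, while the two factors lack large $B_2[g]$, resp.\ $B^\circ_2[g]$, subsets with the relevant fraction vanishing in $k$); these structural facts are precisely what the explicit combinatorial construction is engineered to provide, and the remainder is routine bookkeeping.
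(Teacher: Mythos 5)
Your outer architecture --- $F_2$-isomorphic copies of $W^\circ_{n,k}\times W_{n,k}$ translated into widely separated intervals of $\Z$ --- matches the paper's, and your slicing argument for the ``no large $B_2[g]$ or $B_2^\circ[g]$ subset'' part is a reasonable alternative to the paper's direct pigeonhole in Theorem~\ref{nolargeb2} (though note it quietly relies on the one-dimensional factors $W_{n,k}$, resp.\ $W^\circ_{n,k}$, containing no $B_2[g]$, resp.\ $B_2^\circ[g]$, subset of relative size $\gtrsim 1/k$; this is true by the same counting used in Lemma~\ref{lem:decomposition} and Theorem~\ref{nolargeb2}, but it is not among the stated properties of items (1)--(3), so you would need to spell it out).

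The serious gap is in the $\Lambda(4)$ verification. You ``recall from Lemma~\ref{LambdatoSum}'' that $S\subseteq\Z$ is $\Lambda(4)$ \emph{exactly when} the additive energy satisfies $E(A)\le C|A|^2$ for all finite $A\subseteq S$. Lemma~\ref{LambdatoSum} asserts only the forward implication: a finite $\Lambda(4)$ constant forces $|A\pm A|\gtrsim|A|^2$ (equivalently, $E(A)\lesssim|A|^2$), which is the easy direction obtained by taking $\hat f=\mathbf{1}_A$. The converse --- that a uniform subset-energy bound yields the full inequality $\|f\|_{L^4}\le K\|f\|_{L^2}$ for \emph{arbitrary} coefficient sequences $\hat f$ supported on $S$ --- is not stated or proved anywhere in the paper, is not a standard fact, and is exactly the step your entire $\Lambda(4)$ argument rests on. The obvious attempt, decomposing $\hat f$ dyadically by magnitude and applying the energy bound on each level, yields only $\|f\|_{L^4}\lesssim\sqrt{\log|\mathrm{supp}\,\hat f|}\;\|f\|_{L^2}$, and removing that logarithm is the whole difficulty. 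The paper sidesteps this entirely: each block $W^\circ_{n,k}\times W_{n,k}$ has $\Lambda(4)$ constant bounded independently of $k$ because it is the product of a $B_2[2]$ set and a $B_2^\circ[2]$ set (Lemma~\ref{LambdaProd}), this transfers to $\Z$ by $F_2$-isomorphism (Lemma~\ref{LambdaIso}), the blocks are translated into distinct dyadic intervals $[2^{\psi(k)},2^{\psi(k)+1})$, and then the Littlewood--Paley inequality (Lemma~\ref{LittlewoodPaleyDecomposition}) controls the $L^4$ norm of the union. Your per-block and cross-block energy computations are correct, but they only recover the \emph{consequence} of $\Lambda(4)$ recorded in Lemma~\ref{LambdatoSum}; to actually prove $\Lambda(4)$ you should invoke the uniformly bounded $\Lambda(4)$ constants of the blocks directly and glue them with Littlewood--Paley, rather than trying to pass back through subset energy.
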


We note that this result cannot be obtained from Meyer's set $E$, since any finite subset of $E$ contains a large $B_2[2]$ set, as discussed above.

\subsection{Related Work}
We are aware of two other constructions of $\Lambda(4)$ sets which are not known to be finite unions of $B_2[g]$ sets. In \cite{B01}, Bourgain probabilistically proved the existence of a $\Lambda(4)$ set $S$ such that $|[0,n]\cap S| \gg n^{1/2}$ for every $n \in \N$.  A theorem of Erd\H{o}s (see \cite{HA1}, Theorem 8 on page 89) states that if $A$ is a $B_{2}[1]$ set, then

\begin{equation}\label{ErdosDensity}
 |A\cap[0,n]| \ll \frac{n^{1/2}}{\ln^{1/2}(n)}
\end{equation}

for infinitely many $n$. It follows from this that Bourgain's set is not the finite union of $B_{2}[1]$ sets. This observation essentially appears in \cite{B02}. If one could show (for infinitely many $n$) that
\[|A\cap[0,n]| = o(n^{1/2}) \]
whenever $A$ is a $B_{2}[g]$ set, it would follow that Bourgain's set is not a finite union of $B_{2}[g]$ sets. Such strong estimates are not currently known.

In \cite{KL1}, Klemes constructed an example of a $\Lambda(4)$ set using an intricate selection algorithm based on a tree structure. While he was able to establish that his set was a $\Lambda(4)$ set without deciding if his set was a finite union of $B_{2}[g]$ sets, he conjectured that the set could in fact be decomposed in this way.

\subsection{Preliminaries}
\label{sec:definitions}

We now give formal definitions of $B_h[g]$ sets, $B_2^\circ[g]$, and $\Lambda(p)$ sets. We define these for all $2<p<\infty$ and all positive integer values of $h$, although in this paper we will only be concerned with $h=2$ and $p=4$. Below, $d$ denotes a positive integer, and $\Z^{d}$ denotes the additive group of tuples of $d$ integers.

\paragraph{$\bf{B_h[g]}$ sets} A set $S \subseteq \Z^d$ is called a $B_h[g]$ set if the number of representations of every $\xi \in \Z^{d}$ as a sum $\xi=\nu_{1}+\ldots +\nu_{h}$ for $\nu_{1},\ldots,\nu_{h} \in S$ is at most $h! g$. The $h!$ is a matter of notational convenience (essentially, we do not wish to count reorderings of summands separately). In particular, a $B_2[g]$ set in $\mathbb{Z}$ is a set such that any integer can be expressed as a sum of two elements in the set in at most $g$ ways (where exchanging the order of the summands does not count as a new representation). We note that for a $B_2[1]$ set, all sums are unique.

\paragraph{$\bf{B^\circ_2[g]}$ sets} A set $S \subseteq \Z^d$ is called a $B^\circ_2[g]$ set if every nonzero element of $\Z^d$ can be expressed as a difference of two elements of $S$ in at most $g$ ways. (We note that there are always many representations of 0 as $a-a$, $b-b$, and so on.)

\paragraph{$\bf{\Lambda(p)}$ sets}
Let $\T^d$ denote the $d$-dimensional torus.  For a measurable complex-valued function $f$ on $\T^d$, we define its $L^{p}$ norm as $||f||_{L^p}=\left(\int_{\T^d}|f(x)|^{p}dx\right)^{1/p}$. We denote the space of all measurable complex-valued functions on $\T^{d}$ with finite $L^p$ norm as $L^{p}(\T^d)$.  Defining $e(x) := e^{2\pi i x}$, we have that a function $f \in L^{2}(\T^{d})$ can be expressed as a Fourier series

\[ f(x) \approx  \sum_{\xi \in \Z^{d}} \hat{f}(\xi) e(\xi \cdot x). \]

To avoid issues regarding the convergence of the sum defining the series, one could always take $f$ such that $\hat{f}(\xi)$ has finite support (i.e. trigonometric polynomials) in what follows. This restriction suffices since we are interested in establishing $L^{p}$ inequalities, and functions with finitely supported Fourier expansions form a dense subspace of $L^{p}(\T^{d})$. In \cite{RU1}, Rudin defined a subset of $S \subseteq \Z^{d}$ to be a $\Lambda(p)$ set, for $p>2$, if there exists a constant $K_{p}(S)$ such that
\begin{equation}\label{LambdaDef}
\vectornorm{f}_{L^p} \leq K_{p}(S) ||f||_{L^{2}}
\end{equation}
whenever $\text{supp}(\hat{f})\subseteq S$. When we wish to emphasize the dimension $d$ of the set $S$, we will write $K_{p}^{d}(S)$.

When $p$ is an even integer, say $p=2h$, one can expand the left-hand side of (\ref{LambdaDef}) and obtain

\[\vectornorm{f}_{L^{2h}}^{h} = \vectornorm{|f|^h }_{L^{2}} = \left(\sum_{\xi \in \Z^{d}} \left| \sum_{\substack{\xi = \nu_{1}+\ldots + \nu_{h} \\ \nu_{1},\ldots,\nu_{h} \in S}} \hat{f}(\nu_{1})\hat{f}(\nu_{2})\ldots\hat{f}(\nu_{h})  \right|^2    \right)^{1/2}\]

\[\leq \left(\sum_{\xi \in \Z^{d}}\left(R_{h}(\xi,S)\right)^2 \sup_{\substack{\left|\hat{f}(\nu_{1})\hat{f}(\nu_{2})\ldots \hat{f}(\nu_{h})  \right|^2 \\ \xi = \nu_{1}+\ldots + \nu_{h}}}  \left| \hat{f}(\nu_{1})\hat{f}(\nu_{2})\ldots \hat{f}(\nu_{h}) \right|^2 \right)^{1/2} \]

\begin{equation}\label{LambdaRep}
\leq \sup_{\xi \in \Z^d} R_{h}(\xi,S) \left( \sum_{\nu \in \Z^d}|\hat{f}(\nu)|^2 \right)^{h/2} \leq \sup_{\xi \in \Z^d} R_{h}(\xi,S) \vectornorm{f}_{L^2}^{h},
\end{equation}
where $R_{h}(\xi,S)$ denotes the number of representations of $\xi \in \Z^{d}$ as a sum $\xi=\nu_{1}+\ldots +\nu_{h}$ for $\nu_{1},\ldots,\nu_{h} \in S$. Thus any set $S$ with the property that $R_{h}(\xi,S)\leq h! g <\infty$ is a $\Lambda(2h)$ set. In particular, every finite set is a $\Lambda(p)$ set for every $p>2$.

We have now shown that every $B_h[g]$ set is a $\Lambda(2h)$ set. One might ask if every $\Lambda(2h)$ set is a $B_{h}[g]$ set.  This is easily seen to be false.  Notice that the union of two $\Lambda(p)$ sets, say $S= S_{1}\cup S_{2}$, is also a $\Lambda(p)$ set.  Letting $K_{p}(S_{1})$ and $K_{p}(S_{2})$ denote the $\Lambda(p)$ constants of the sets $S_{1}$ and $S_{2}$ respectively, for any $f$ with $\hat{f}$ supported on $S$, the triangle inequality gives:

\[ \vectornorm{f}_{L^p} = \vectornorm{\sum_{\nu_{1}\in S_{1}}\hat{f}(\nu_{1})e(\nu_{1}\cdot x)  +  \sum_{\nu_{2}\in S_{2}\setminus S_1}\hat{f}(\nu_{2})e(\nu_{2}\cdot x) }_{L^p}\]

\begin{equation}\label{LambdaUnion}
\leq \vectornorm{\sum_{\nu_{1}\in S_{1}}\hat{f}(\nu_{1})e(\nu_{1}\cdot x)}_{L^p}  +  \vectornorm{\sum_{\nu_{2}\in S_{2}\setminus S_1}\hat{f}(\nu_{2})e(\nu_{2}\cdot x) }_{L^p} \leq (K_{p}(S_{1})+K_{p}(S_{2})) \vectornorm{f}_{L^2}.
\end{equation}

Now we note that $S_{1}= \{2^{i} : i\in\N\}$ and $S_{2}= \{-2^{j} : j \in \N \}$ are each $B_{2}[1]$ sets but $S_{1}\cup S_{2}$ is not a $B_{2}[g]$ for any finite $g$. The next natural question is Rudin's question: is every $\Lambda(2h)$ set a finite union of $B_{h}[g]$ sets? (Rudin asked this only for dimension $d=1$, but it follows from the methods described below and a standard compactness argument that a counterexample in any dimension can be transformed into a counterexample in every other dimension.) Meyer's counterexample \cite{M1} shows that the answer to this question is no for all $h \geq 2$.

\section{A First Attempt at a Combinatorial Construction}
In~\cite{EO1}, Erd\H{o}s constructed a $B_2[3]$ set that is not a finite union of $B_2[g]$ sets for $g < 3$, which he proved by applying Ramsey theory. He conjectured that for any $g$, there exists a $B_2[g]$ set $A$ that is not a finite union of $B_2[g-1]$ sets. This was later proven for all $g$ by Ne\v{s}etril and R\"{o}dl~\cite{NE1}. Informally, this result means that one cannot always tradeoff a larger union size to obtain a lower value of $g$ when representing a set as a finite union of $B_2[g]$ sets.

Our approach to the anti-Freiman problem is to begin by solving a variant of Erd\H{o}s' problem where the roles of $g$ and the union size are switched. Informally put, we seek to prove that one cannot always tradeoff a higher value of $g$ to obtain a smaller union size when representing a set as a finite union of $B_2[g]$ sets.

As a first attempt, we consider a Ramsey-theoretic approach, much like Erd\H{o}s and somewhat reminiscent of Meyer's set $E$. For each positive integer $k$, we will construct an infinite $S \subseteq \Z$ such that $S$ is a union of $2^k$ $B_2[2^{k-1}]$ sets, but not a union of $2^k-1$ $B_2[g']$ sets for any constant $g'$. The undesirable feature of this construction is that the value of $g$ is a function of $k$. This dependence of $g$ on $k$ is removed from our main construction in the next section, where we are able to fix $g = 1$, but it is instructive to consider this simpler construction first.

\begin{proposition} For every positive integer $k$, there exists a set $S \subseteq \Z$ such that $S$ is a union of $2^k$ $B_2[2^{k-1}]$ sets, and $S$ cannot be decomposed as a union of $2^k -1$ $B_2[g']$ sets for any finite $g'$.
\end{proposition}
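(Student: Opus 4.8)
The plan is to realize $S$ as a set of signed sums over a fast-growing sequence, with the $k$ ``digits'' drawn from disjoint index blocks so that no cancellation can occur within a fixed sign pattern; this makes the pieces $B_2[2^{k-1}]$, and it tightly controls the repeated sums that are exploited in the lower bound.

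\emph{The construction.} Fix $k$ pairwise disjoint infinite sets of positive integers $I_1,\dots,I_k$ and a sequence $t_1<t_2<\cdots$ growing fast enough (say $t_{n+1}>5\sum_{m\le n}t_m$) that each integer has at most one representation of the form $\sum_n c_n t_n$ with every $c_n\in\{-2,-1,0,1,2\}$. For $\vec\epsilon=(\epsilon_1,\dots,\epsilon_k)\in\{\pm1\}^k$ set $S_{\vec\epsilon}=\{\,\sum_{i=1}^k\epsilon_i t_{r_i}:r_i\in I_i\,\}$, and let $S=\bigcup_{\vec\epsilon}S_{\vec\epsilon}\subseteq\Z$. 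I would first check that each $S_{\vec\epsilon}$ is a $B_2[2^{k-1}]$ set: a sum of two of its elements equals $\sum_{i=1}^k\epsilon_i(t_{u_i}+t_{v_i})$ with $u_i,v_i\in I_i$, and because the blocks are disjoint and the two terms in block $i$ carry the same sign $\epsilon_i$, there is no cancellation; hence by uniqueness of the base-$t$ expansion the value determines the multiset $\{u_i,v_i\}$ in each block, and the only remaining freedom is, in each block where $u_i\ne v_i$, which index is assigned to which summand, giving at most $2^{k-1}$ unordered representations. Thus $S$ is a union of $2^k$ $B_2[2^{k-1}]$ sets.

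\emph{The lower bound.} Suppose for contradiction that $S=G_1\cup\cdots\cup G_{2^k-1}$ with every $G_c$ a $B_2[g']$ set, and let $\chi\colon S\to\{1,\dots,2^k-1\}$ record which part an element lies in. Call a tuple $\vec r=(r_1,\dots,r_k)$ with $r_i\in I_i$ a \emph{frame}, and write $x_{\vec\epsilon}(\vec r)=\sum_i\epsilon_i t_{r_i}$, so each frame gives rise to $2^k$ elements of $S$. The goal is to produce a single frame whose $2^k$ elements receive $2^k$ distinct colours, which is impossible with only $2^k-1$ colours. Fix a pair $\vec\epsilon\ne\vec\delta$, let $F=\{i:\epsilon_i\ne\delta_i\}$ (nonempty), and choose any $j\in F$. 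Then $\sigma:=x_{\vec\epsilon}(\vec r)+x_{\vec\delta}(\vec r)=\sum_{i\notin F}2\epsilon_i t_{r_i}$ does not involve the coordinate $r_j$, and among the representations of $\sigma$ as a sum of two elements of $S$ there is the family of pairs $\{x_{\vec\epsilon}(\vec r'),x_{\vec\delta}(\vec r')\}$ as $\vec r'$ ranges over all frames agreeing with $\vec r$ except in the $j$th coordinate; these pairs are distinct and lie in $S_{\vec\epsilon}\times S_{\vec\delta}$. If $\chi(x_{\vec\epsilon}(\vec r'))=\chi(x_{\vec\delta}(\vec r'))$ for more than $(2^k-1)g'$ values of $r'_j$, then some $G_c$ contains more than $g'$ representations of $\sigma$, contradicting that $G_c$ is $B_2[g']$. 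Hence, with the coordinates other than the $j$th held fixed, $\chi(x_{\vec\epsilon})\ne\chi(x_{\vec\delta})$ for all but at most $(2^k-1)g'$ values of the free coordinate.

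To finish, observe that the previous paragraph exhibits, for each of the $\binom{2^k}{2}$ pairs $\{\vec\epsilon,\vec\delta\}$, a ``bad'' set of frames (those on which $x_{\vec\epsilon}$ and $x_{\vec\delta}$ share a colour) meeting every line in one fixed coordinate direction in at most $(2^k-1)g'$ points. Choosing $J_i\subseteq I_i$ with $|J_i|=\ell$, the bad frames for a given pair number at most $(2^k-1)g'\ell^{k-1}$ within the box $\prod_i J_i$, so once $\ell>\binom{2^k}{2}(2^k-1)g'$ not all $\ell^k$ frames in the box can be bad for some pair; a frame that is good for every pair has the required rainbow colouring, a contradiction. (For $k=1$ this degenerates to the remark that $0=t_r+(-t_r)$ for every $r$, so $S$ itself is not a $B_2[g']$ set.) I expect the crux to be the step in the middle paragraph: the representations of $\sigma=x_{\vec\epsilon}+x_{\vec\delta}$ are \emph{not} all contained in $S_{\vec\epsilon}\times S_{\vec\delta}$ (they spread across $2^{|F|-1}$ products of sign-classes), so one cannot proceed colour-class by colour-class in a naive way; the resolution is to isolate the single axis-parallel subfamily of representations that does lie in $S_{\vec\epsilon}\times S_{\vec\delta}$, and then to extract a rainbow frame by the above counting rather than by a greedy coordinate-by-coordinate choice, which fails because a pair with small $F$ still constrains frames through all $k$ coordinates.
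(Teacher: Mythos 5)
Your proof is correct, and the construction you use is essentially the same as the paper's (the paper uses $k$ disjoint sequences of powers of $5$, you use one fast-growing sequence with $k$ disjoint index blocks $I_1,\dots,I_k$; these are interchangeable). Where you diverge is the lower bound. The paper's proof colours $k$-element subsets of $\N$ by which pair of sign-classes collides, applies Ramsey's theorem to extract an infinite monochromatic set $M$, and then varies the coordinate where the two fixed sign vectors cancel to produce arbitrarily many representations of a single sum inside one part. You replace Ramsey with a quantitative pigeonhole argument: for each pair $\{\vec\epsilon,\vec\delta\}$ you isolate the axis direction $j\in F$ along which the sum $\sigma$ is constant, bound the number of frames per line where that pair collides by $(2^k-1)g'$, and then sum over lines and pairs to show that once $\ell>\binom{2^k}{2}(2^k-1)g'$ a rainbow frame must exist, which is impossible with only $2^k-1$ colours. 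The key point you flag — that representations of $\sigma$ spread over $2^{|F|-1}$ products of sign-classes, so one must restrict attention to the single family $\{x_{\vec\epsilon}(\vec r'),x_{\vec\delta}(\vec r')\}$ — is exactly right, and the counting sidesteps the failure of a greedy coordinate-by-coordinate selection. Your argument is thus more elementary and yields an explicit box size, whereas the paper's Ramsey argument is softer and non-quantitative. Interestingly, your approach anticipates the counting strategy the paper itself adopts in Section 3 for the main construction (where Ramsey would not suffice, since $g$ is held fixed at $1$); the Ramsey route was used only for this warm-up Proposition, where it gives the cleanest exposition.
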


\begin{proof} We first define $k$ disjoint sequences of positive integers, $X_1 = \{x^1_i\}_{i=1}^\infty, X_2 = \{x^2_i\}_{i=1}^\infty,$ $\ldots$, $X_k = \{x^k_i\}_{i=1}^\infty$, where each consists of powers of 5. For concreteness, we can take $X_j$ to be the sequence $\{5^{ik+j}\}_{i=1}^{\infty}$ for each $j$.
We note that base 5 expansions of integers with coefficients in $\{-2,-1,0,1,2\}$ are unique.

We let $v_1, \ldots, v_{2^k} \in \{1, -1\}^k$ denote all of the distinct vectors of length $k$ with entries in $\{1,-1\}$. For $j$ from 1 to $k$, we define the set
\[S_j := \{ (x^1, x^2, \ldots, x^k) \cdot v_j \; \big| \; x^1 \in X_1, \ldots, x^k \in X_k\}. \]
We set $S := \bigcup_{j=1}^k S_j$. We note that each element of $S$ has a unique representation as $(x^1, \ldots, x^k) \cdot v_j$ for $x^1 \in X_1, \ldots, x^k \in X_k$ and $1 \leq j \leq 2^{k}$.

We claim that each $S_j$ is a $B_2[g]$ set, for $g = 2^{k-1}$. To see why, we consider adding two elements of $S_j$:
\[(x^1, x^2, \ldots, x^k) \cdot v_j + (y^1, y^2, \ldots, y^k)\cdot v_j = (x^1+y^1, x^2+y^2, \ldots, x^k + y^k)\cdot v_j.\]
Here, $x^1, y^1 \in X_1$, $x^2, y^2 \in X_2, \ldots, x^k, y^k \in X_k$. Recalling that the sequences $X_1, \ldots, X_k$ are disjoint sequences of powers of 5, we see that this is a base 5 expansion of an integer with coefficients in $[-2,2]$ (coefficients of $2$ or $-2$ will appear only where $x^i = y^i$). Since these expansions are unique, this sum uniquely determines the values of $x^1, y^1, x^2, y^2, \ldots, x^k, y^k$, up to exchanges of $x^i$ and $y^i$. In other words, it determines the unordered sets $\{x^i, y^i\}$ for $i$ from 1 to $k$. There are $2^k$ ways to choose two elements of $S_j$ which match these sets: for each set $\{x^i, y^i\}$, we must decide whether $x^i$ will be included in the first or second element. Thus, each $S_j$ is a $B_2[2^{k-1}]$ set.

Now we prove that $S$ cannot be decomposed into $2^k-1$ $B_2[g']$ sets for any $g'$. We suppose that $S$ can be decomposed into $2^k-1$ $B_2[g']$ sets, $A_1, \ldots, A_{2^k-1}$, and proceed to derive a contradiction. We will use this decomposition to give a ${2^k \choose 2}$-coloring of all $k$-element subsets of $\mathbb{N}$.

To color the set $(i_1, \ldots, i_k)$ for $i_1 < i_2 < \ldots < i_k$, we consider the following $2^k$ elements of $S$:
\[(x^1_{i_1},x^2_{i_2}, \ldots, x^k_{i_k})\cdot v_1 \in S_1,\]
\[\vdots\]
\[(x^1_{i_1},x^2_{i_2}, \ldots, x^k_{i_k}) \cdot v_{2^k} \in S_{2^k}.\]
Since we have decomposed $S$ into $2^k-1$ sets, some pair of these elements must belong in the same $A_n$. We color $(i_1, \ldots, i_k)$ according to which pair this is (if several pairs are in the same $A_n$, we choose one arbitrarily). For example, if the element of $S_1$ and the element of $S_2$ are placed in the same $A_n$, we may assign the color corresponding to the pair (1,2).

Since we are coloring $k$-element subsets of $\mathbb{N}$ with finitely many colors, a general version of Ramsey's Theorem (again, this can be found in e.g. \cite{D1}) tells us that there exists an infinite monochromatic set $M \subseteq \mathbb{N}$. This means that for any two $k$-element subsets of $M$, the color assigned to them is the same. We call this single color $c(M)$.

Now, $c(M)$ corresponds to a pair $(i,j)$ of indices between 1 and $2^k$. We note that the corresponding vectors $v_i$ and $v_j$ differ in some coordinate $\ell$ (i.e. $v_i + v_j = 0$ in the $\ell^{th}$ coordinate). We consider $k$-element subsets of $M$: ($m_1 < m_2 < \ldots < m_k)$.

We consider fixing elements of $M$ in the indices $\neq \ell$ and letting the element $m_\ell$ vary over $M$ (while satisfying the ordering condition). For each value of $m_\ell$, we get two corresponding elements of some $A_n$ whose sum is equal to
\[(x^1_{m_1}, \ldots, x^k_{m_k}) \cdot (v_i + v_j),\]
which does not depend on $m_\ell$. Since $M$ is infinite, the number of values of $m_\ell$ satisfying the ordering relation $m_1 < \ldots < m_k$ can be made arbitrarily large. This means that one of $A_1, \ldots, A_{2^k-1}$ must contain arbitrarily many pairs of elements with the same sum, which contradicts that it is a $B_2[g']$ set for some fixed $g'$.

\end{proof}

\section{Our Main Construction}
\hspace*{0.5cm} We now give our main construction, which improves upon our initial construction as described in the last section. Our previous construction had the undesirable feature that our value of $g$ grew as function of our union size. This was due to the fact that a sum of two elements both from the same $S_j$ uniquely determined the pairs of values from each of the sequences $X_1, \ldots, X_k$ going into it, but these could be recombined arbitrarily to get another occurrence of the same sum. We will overcome this problem by introducing an error correcting code, which will enforce that the occurrence of the sum is unique. We do not need to adapt our Ramsey theory argument to this more complex situation, since an alternative counting argument replaces it.

We construct, for each positive integer $k$, a union of $k$ $B_2[1]$ sets which is not a union of $k-1$ $B_2[g]$ sets for any finite $g$. This resolves the variant of Erd\H{o}s' problem mentioned above, showing that one cannot always reduce the union size of a finite union of $B_2[1]$ sets, even if one is willing to use $B_2[g]$ sets for an arbitrarily high $g$. Extending this result to $B_h[g]$ sets for values of $h > 2$ is an interesting problem which we do not address.

We begin by defining $k$ vectors $v_1, \ldots, v_k \in \{+1, -1\}^d$ with two key properties. First, we require that for each $i \neq j$, $v_i+v_j$ has $> \frac{d}{2}$ coordinates equal to 0 (in other words, these vectors form an error correcting code with relative distance strictly greater than $\frac{1}{2}$). Second, we require the values $v_i + v_j$ to be distinct (i.e. $v_i + v_j = v_h + v_\ell$ holds if and only if the sets $\{i,j\}$ and $\{h, \ell\}$ are equal). Such vectors can be easily constructed from Hadamard matrices when $d = 2^j - 1$ for some $j$ such that $2^j \geq k$.

\begin{lemma}
\label{lem:vectorSumDistinctness}
For any fixed positive integer $k$ and for $d = 2^j -1$ such that $2^j \geq k$, there exist vectors $v_1, \ldots, v_k \in \{1,-1\}^d$ such that the pairwise vector sums $v_i + v_j$ are distinct, and have $> \frac{d}{2}$ 0's when $i \neq j$.
\end{lemma}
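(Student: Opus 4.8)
The plan is to build the vectors from a Hadamard matrix. Recall that for $d = 2^j - 1$ there is a Hadamard matrix $H$ of order $2^j = d+1$, which after normalizing we may assume has first row and first column consisting entirely of $+1$'s; its rows are pairwise orthogonal $\{+1,-1\}$-vectors of length $d+1$. I would take the $k \le 2^j$ rows of $H$ other than the all-ones row (or, if $k = 2^j$ is needed, one may instead use all rows and argue slightly differently — but since we only need $k$ vectors and $2^j \ge k$, discarding the all-ones row is harmless), delete the first coordinate of each, and let $v_1, \dots, v_k \in \{1,-1\}^d$ be the resulting truncated rows. This is the natural ``punctured Hadamard code'' construction.

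The two required properties then follow from orthogonality. For the relative distance: if $u, w$ are two distinct rows of $H$, orthogonality gives $\langle u, w \rangle = 0$, so $u$ and $w$ agree in exactly $(d+1)/2$ coordinates and disagree in exactly $(d+1)/2$ coordinates (out of $d+1$). After deleting the first coordinate — where all rows agree, since the first column is all $+1$'s — the truncated vectors $v_i, v_j$ agree in exactly $(d+1)/2 - 1 = (d-1)/2$ coordinates and disagree in exactly $(d+1)/2$ coordinates. Now $v_i + v_j$ has a $0$ in precisely the coordinates where $v_i$ and $v_j$ disagree, so it has exactly $(d+1)/2 > d/2$ zeros. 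This gives the relative-distance condition. For distinctness of the pairwise sums: from $v_i + v_j = v_h + v_\ell$ with $\{i,j\} \ne \{h,\ell\}$, I would derive a contradiction with the distance bound. Indeed, the coordinates where $v_i + v_j$ is nonzero are exactly those where $v_i = v_j$, a set of size $(d-1)/2$; similarly for $v_h + v_\ell$. If the two sums are equal, then in particular these ``agreement sets'' coincide, and on the complementary set (size $(d+1)/2$) we have $v_i = -v_j$ and $v_h = -v_\ell$. One then checks that $v_i$ and $v_h$ must either agree on all of this complementary set or disagree on all of it, forcing the number of coordinates on which $v_i$ and $v_h$ agree to be either $\ge (d+1)/2$ or $\le (d-1)/2 + 0$ — in either case, combined with the parity/count constraints forced by orthogonality (any two distinct truncated rows agree in exactly $(d-1)/2$ coordinates, and $v_i = v_i$ agrees in $d$), we find the configuration is impossible unless $\{i,j\} = \{h,\ell\}$. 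A cleaner way to organize this last step: lift back to $H$ by prepending a $+1$ coordinate, so $v_i + v_j = v_h + v_\ell$ becomes $u_i + u_j = u_h + u_\ell$ for the corresponding full rows; taking inner products of both sides with $u_i$ and using $\langle u_i, u_i\rangle = d+1$ and $\langle u_i, u_m \rangle = 0$ for $m \ne i$ pins down which rows can appear, yielding $\{i,j\} = \{h,\ell\}$.

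The main obstacle is the distinctness claim: the relative-distance property is an immediate consequence of orthogonality plus the truncation bookkeeping, but showing the pairwise sums are genuinely distinct requires using the orthogonality relations a second time in a slightly less obvious way. I expect the inner-product argument on the lifted vectors $u_i$ (equivalently, exploiting that $\{u_1, \dots, u_{d+1}\}$ is an orthogonal basis of $\R^{d+1}$, so the $u_i$ are linearly independent and hence $u_i + u_j = u_h + u_\ell$ forces equality of multisets of indices) to be the most efficient route, and it sidesteps any delicate counting. It remains only to note that $2^j \ge k$ guarantees enough rows are available, completing the construction.
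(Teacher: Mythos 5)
Your proof is correct and uses essentially the same construction as the paper: truncated rows of a normalized Hadamard matrix, with the zero-count following from orthogonality and the distinctness of pairwise sums following from an inner-product argument. The paper argues distinctness directly with the truncated vectors, using $v_i \cdot v_j = -1$ for $i \neq j$ and $v_i \cdot v_i = d$; your lift back to the full rows $u_i$ (or, equivalently, the linear-independence observation for the orthogonal basis $\{u_i\}$) is a mild repackaging of the same idea, and your exclusion of the all-ones row is unnecessary --- the paper keeps it as $v_1$ and the argument is unaffected.
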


\begin{proof}
We let $H$ be a $2^j \times 2^j$ Hadamard matrix with all 1's in its first column (these can be recursively constructed, and are also known as Walsh matrices). This matrix has entries in $\{1,-1\}$, and any two distinct rows are orthogonal. We take $v_1, \ldots, v_k$ to be the first $k$ rows of $H$, where we omit from each the first column's entry, which is always equal to 1. These are distinct vectors of length $d = 2^j-1$, and we claim that each $v_i + v_j$ for $i\neq j$ has $> \frac{d}{2}$ 0's. To see why, we note that $v_i \cdot v_j = -1$ (because the rows of $H$ are orthogonal and we have omitted the initial 1's), and each coordinate of $v_i, v_j$ contributes $1$ to $v_i \cdot v_j$ if $v_i$ and $v_j$ are equal in this coordinate, and contributes -1 if they are unequal. Hence, $v_i$ and $v_j$ must be unequal in strictly more than half the coordinates, so $v_i + v_j$ has $> \frac{d}{2}$ 0's.

We now suppose that $v_i + v_j = v_h + v_\ell$ and that $i\notin \{h, \ell\}$. Then we have:
\[v_i \cdot (v_h + v_\ell) = v_i \cdot v_h + v_i \cdot v_\ell = -1-1= -2.\]
However,
\[v_i \cdot (v_i + v_j)= v_i \cdot v_i + v_i \cdot v_j = d -1 > -2,\]
so we have a contradiction. Thus, $i \in \{h, \ell\}$. It follows that $\{i,j\} = \{h, \ell\}$.
\end{proof}

We now define $d$ disjoint sequences of positive integers, $X_1 = \{x^1_i\}_{i=1}^\infty, X_2 = \{x^2_i\}_{i=1}^\infty,$ $\ldots$, $X_d = \{x^d_i\}_{i=1}^\infty$, where each consists of powers of 5. For concreteness, we take $X_j$ to be the sequence $\{5^{id+j}\}_{i=1}^{\infty}$ for each $j$. We additionally define an infinite set $S \subset \mathbb{N}^d$ as follows. We let $M$ be the $d \times \lceil \frac{d}{2} \rceil$ Vandermonde matrix:
\[M = \left(
        \begin{array}{ccccc}
          1 & 1 & 1 & \ldots & 1 \\
          1 & 2 & 2^2 & \ldots & 2^{\lceil \frac{d}{2} \rceil -1} \\
          \vdots & \vdots & \vdots & \ddots & \vdots \\
          1 & d & d^2 & \ldots & d^{\lceil \frac{d}{2} \rceil -1} \\
        \end{array}
      \right)
\]
We note that any $\lceil \frac{d}{2} \rceil$ rows of the matrix form an invertible $\lceil \frac{d}{2} \rceil \times \lceil \frac{d}{2} \rceil$ Vandermonde matrix. We also note that invertibility remains even if we reduce the entries modulo any prime which is $> d$ (because $1, \ldots, d$ will have distinct modular reductions). By Bertrand's Postulate, we know such a prime exists which is $\leq 2d$. Hence, we obtain a reduced matrix $M$ with positive entries $ < 2d$ such that any $\lceil \frac{d}{2} \rceil$ rows form an invertible matrix (invertible over $\mathbb{R}$).

We now define $S$ as:
\[S := \big \{ M \cdot (i'_1, \ldots, i'_{\lceil \frac{d}{2} \rceil})^t : (i'_1, \ldots, i'_{\lceil \frac{d}{2} \rceil}) \in \mathbb{N}^{\lceil \frac{d}{2} \rceil} \big \}.\] (We use the notation $(i'_1, \ldots, i'_{\lceil \frac{d}{2} \rceil})^t$ to denote the transpose, i.e. $(i'_1, \ldots, i'_{\lceil \frac{d}{2} \rceil})^t$ denotes a column vector whose first entry is $i'_1$, etc.) The key property of $S$ that we will use is that if we are given at least half of the coordinates of some tuple $(i_1, \ldots, i_d) \in S$, we can uniquely solve for the remaining coordinates (by solving a linear system of $\lceil \frac{d}{2} \rceil$ linearly independent equations in $\lceil \frac{d}{2} \rceil$ unknowns). In other words, $S$ is an error-correcting code. (More precisely, a Vandermonde matrix modulo a prime $p$ is the generating matrix for a Reed-Solomon code over $\mathbb{F}_p$.)

For each $j$ from 1 to $k$, we define $W_j \subset \mathbb{Z}$ as:
\[W_j := \{ (x^1_{i_1}, \ldots, x^d_{i_d}) \cdot v_j : (i_1, \ldots, i_d) \in S\}.\]
In other words, an element of $W_j$ is formed by taking a $d$-tuple in $S$, using the coordinates as indices into the $d$ disjoint sequences $X_1, \ldots, X_d$, and taking the linear combination of the corresponding values with coefficients equal to the coordinates of $v_j$.

We will prove that each $W_j$ is a $B_2[1]$ set, and that $W := W_1 \cup W_2 \cup \ldots \cup W_k$ is a union of $k$ $B_2[1]$ sets that cannot be decomposed as a union of $k-1$ $B_2[g]$ sets for any finite value of $g$. (We note that $W$ and $S$ are defined with respect to a fixed $k$, and we leave this dependence implicit. In other words, $W$ and $S$ actually represent a family of constructions, parameterized by $k$.) We start by proving some useful lemmas.

\begin{lemma}
\label{lem:uniqueExpressions}
Each element of $W$ has a unique expression as $(x^1_{i_1}, \ldots, x^d_{i_d}) \cdot v_j$ for $(i_1, \ldots, i_d) \in S$ and $1 \leq j \leq k$. In particular, the sets $W_j$ are disjoint.
\end{lemma}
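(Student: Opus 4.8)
The plan is to show that the map $(i_1,\ldots,i_d,j)\mapsto (x^1_{i_1},\ldots,x^d_{i_d})\cdot v_j$ is injective on $S\times\{1,\ldots,k\}$, and then note that disjointness of the $W_j$ is an immediate consequence. The key point is that a value $(x^1_{i_1},\ldots,x^d_{i_d})\cdot v_j$ is a base-$5$ expansion: since $X_1,\ldots,X_d$ are disjoint sequences of powers of $5$, the tuple $(i_1,\ldots,i_d)$ selects $d$ distinct powers of $5$, one from each sequence, and $v_j\in\{+1,-1\}^d$ assigns coefficient $+1$ or $-1$ to each. Thus the value is an integer whose base-$5$ digits are all $0$, $1$, or $-1$ (or $4$, interpreted appropriately), with the nonzero digits sitting precisely at the exponents $id+\ell$ corresponding to $i_\ell = i$ in coordinate $\ell$.

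First I would invoke the uniqueness of base-$5$ expansions with digits in $\{-2,-1,0,1,2\}$ (stated in the excerpt) to conclude that the value of $(x^1_{i_1},\ldots,x^d_{i_d})\cdot v_j$ determines, for each coordinate $\ell$, both the index $i_\ell$ (from which power of $X_\ell$ appears) and the sign $\pm1$ of that digit (which is the $\ell$-th coordinate of $v_j$). Hence the value determines the full tuple $(i_1,\ldots,i_d)$ and the full sign vector $v_j$. Since the vectors $v_1,\ldots,v_k$ are distinct (they are distinct rows of a Hadamard matrix with the first entry deleted, as in Lemma~\ref{lem:vectorSumDistinctness}), recovering $v_j$ recovers the index $j$. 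This gives uniqueness of the expression.

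For the disjointness claim: if some element lay in both $W_j$ and $W_{j'}$ with $j\neq j'$, it would have two expressions $(x^1_{i_1},\ldots,x^d_{i_d})\cdot v_j = (x^1_{i'_1},\ldots,x^d_{i'_d})\cdot v_{j'}$, contradicting the uniqueness just established (the recovered sign vectors would be both $v_j$ and $v_{j'}$, forcing $v_j = v_{j'}$). So the $W_j$ are pairwise disjoint.

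I do not expect a serious obstacle here; the only thing to be slightly careful about is the bookkeeping of which base-$5$ exponents are ``occupied'': because each $X_\ell = \{5^{id+\ell}\}_{i\ge1}$, the exponent $id+\ell$ uniquely encodes both the sequence index $\ell\in\{1,\ldots,d\}$ (as $\ell = $ exponent $\bmod\ d$, using $\ell\le d$) and the position $i$ within that sequence, so different coordinates never collide and no digit ever exceeds $1$ in absolute value. Once that is noted, uniqueness of base-$5$ representations does all the work, and the argument is complete. Note that this lemma does not yet use the error-correcting property of $S$ or the distinctness of the pairwise sums $v_i+v_j$; those will enter only when analyzing sums of two elements of $W$.
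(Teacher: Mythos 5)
Your proof is correct and takes essentially the same approach as the paper: both rely on the uniqueness of base-$5$ expansions with coefficients in $\{-2,\ldots,2\}$ to read off the tuple $(i_1,\ldots,i_d)$ and the sign vector $v_j$ from the value, then use distinctness of the $v_j$ to recover $j$. Your version just fills in the (correct) bookkeeping about which exponents are occupied; the paper leaves that implicit.
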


\begin{proof} This simply follows from the fact that base 5 expansions of integers with coefficients in $\{-2,-1,0,1,2\}$ are unique. Any value of the form $(x^1_{i_1}, \ldots, x^d_{i_d}) \cdot v_j$ has a base 5 expansion with coefficients in $\{-1,0,1\}$. From this expansion, we can uniquely determine the values of $x^1_{i_1}, \ldots, x^d_{i_d}$ and the coordinates of $v_j$.
\end{proof}

Next, we will obtain a precise characterization of the repeated sums and differences in $W$. We start with the following lemma:

\begin{lemma}
\label{lem:sumSetDisjointness}
The sets $W_i + W_j$ ($1\leq i ,j \leq k$) are disjoint. In other words, $W_i + W_j$ intersects $W_h + W_\ell$ if and only if $\{i,j\}$ and $\{h, \ell\}$ are equal.
\end{lemma}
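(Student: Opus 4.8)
The ``if'' direction is immediate, since $\{i,j\}=\{h,\ell\}$ forces $W_i+W_j=W_h+W_\ell$, which is nonempty. So I would only argue the ``only if'' direction: given $z\in(W_i+W_j)\cap(W_h+W_\ell)$, the goal is to deduce $\{i,j\}=\{h,\ell\}$. The plan is to unwind the two representations of $z$ into explicit signed sums of powers of $5$, read off the single vector $v_i+v_j$ from the base-$5$ digits of $z$, and then invoke Lemma~\ref{lem:vectorSumDistinctness}.

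Concretely, there are tuples $\vec a,\vec b,\vec c,\vec e\in S$ with
\[ z=\sum_{m=1}^{d}\bigl(v_i[m]\,x^m_{a_m}+v_j[m]\,x^m_{b_m}\bigr)=\sum_{m=1}^{d}\bigl(v_h[m]\,x^m_{c_m}+v_\ell[m]\,x^m_{e_m}\bigr). \]
Since each $x^m_t=5^{td+m}$ is a single power of $5$ and each coefficient lies in $\{+1,-1\}$, each of these expressions is, after checking that no carrying occurs, a base-$5$ expansion of $z$ with digits in $\{-2,-1,0,1,2\}$; by the uniqueness of such expansions, the two digit strings coincide. Next, for each fixed coordinate $m$, I would sum the digits of $z$ over all exponents of the form $td+m$. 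From the first expression this sum equals $v_i[m]+v_j[m]$ exactly, because the term $v_i[m]x^m_{a_m}$ contributes $v_i[m]$ and the term $v_j[m]x^m_{b_m}$ contributes $v_j[m]$, whether or not $a_m=b_m$; from the second expression the same sum equals $v_h[m]+v_\ell[m]$. Hence $v_i+v_j=v_h+v_\ell$, and Lemma~\ref{lem:vectorSumDistinctness} (its two stated properties together cover the diagonal case $i=j$ as well) gives $\{i,j\}=\{h,\ell\}$.

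The one genuinely load-bearing point is the carry-free claim, and it is exactly why the construction uses powers of $5$: the block of exponents belonging to $X_m$ receives at most two terms from $\{+1,-1\}$ (one from each summand of $z$), so every base-$5$ digit stays in the admissible range $[-2,2]$. I do not expect any other obstacle. Note, in particular, that this lemma does not use the Reed--Solomon (Vandermonde) structure of $S$ at all --- only that its elements are $d$-tuples of positive integers --- since that structure is what will be needed in the subsequent lemmas showing $W_j$ is $B_2[1]$ and that $W$ is not a union of $k-1$ $B_2[g]$ sets. (An alternative to the ``sum of digits'' shortcut is a short case analysis of the digit pattern in the $X_m$-block, which is one of $\{+1,+1\}$, $\{+2\}$, $\{-1,-1\}$, $\{-2\}$, $\{+1,-1\}$, or empty; these six patterns are pairwise distinguishable and pin down $v_i[m]+v_j[m]$, but summing the digits is cleaner.)
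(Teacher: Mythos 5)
Your proof is correct and follows essentially the same route as the paper: both reduce to comparing the $X_m$-block contributions in a unique base-$5$ expansion, deduce $v_i+v_j=v_h+v_\ell$, and then invoke Lemma~\ref{lem:vectorSumDistinctness}. Your digit-sum shortcut is a slightly tidier way to extract $v_i[m]+v_j[m]$ from the block than the case analysis the paper gestures at, but it is the same underlying observation.
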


\begin{proof} Again, this follows from the fact that base 5 expansions of integers with coefficients in $\{-2, -1, 0, 1,2\}$ are unique. We suppose that $\{i,j\} \neq \{h, \ell\}$, so (from Lemma \ref{lem:vectorSumDistinctness}) we have that $v_i + v_j \neq v_h + v_\ell$. Without loss of generality, we suppose that $v_i + v_j$ and $v_h + v_\ell$ differ in the first coordinate. We suppose that $W_i + W_j$ intersects $W_h + W_\ell$. This means that there exist tuples $(i_1, \ldots, i_d), (j_1, \ldots, j_d), (h_1, \ldots, h_d), (\ell_1, \ldots, \ell_d) \in S$ such that:
\[(x^1_{i_1}, \ldots, x^d_{i_d}) \cdot v_i + (x^1_{j_1}, \ldots, x^d_{j_d}) \cdot v_j = (x^1_{h_1}, \ldots, x^d_{h_d}) \cdot v_h + (x^1_{\ell_1}, \ldots, x^d_{\ell_d}) \cdot v_\ell.\]

Since base 5 expansions with coefficients in $[-2,2]$ are unique, we must have the same contribution of terms from sequence $X_1$ on both sides. This can only occur when the set of the first coordinates of $v_i, v_j$ and the set of the first coordinates of $v_h, v_\ell$ contain the same number of +1's and -1's, i.e. when $v_i + v_j$ and $v_h + v_\ell$ agree in the first coordinate. This contradicts our assumption that $v_i + v_j$ and $v_h + v_\ell$ differ in the first coordinate, so we have shown that $W_i + W_j$ and $W_h + W_\ell$ are disjoint when $v_i + v_j \neq v_h + v_\ell$, i.e. when $\{i,j\} \neq \{h, \ell\}$.
\end{proof}

We now prove a very helpful general lemma. We let $\phi:S\rightarrow \Z^d$ denote the map which takes a $d$-tuple $(i_1, \ldots, i_d)$ in $S$ to the vector $(x^1_{i_1}, \ldots, x^d_{i_d}) \in \Z^d$. We note that each element of our set $W$ can be expressed as $\phi(M\cdot y)\cdot v_i$ for some $i$ and some vector $y \in \Z^{\lceil \frac{d}{2} \rceil}$, where $M$ is the matrix described above.
\begin{lemma}
\label{lem:general}
We let $v'_i$ and $v'_j$ denote \textbf{any two vectors} in $\{+1, -1\}^d$. We suppose that $y, z, y', z' \in \Z^{\lceil \frac{d}{2} \rceil}$ satisfy:
\[\phi(M \cdot y) \cdot v'_i + \phi(M \cdot z) \cdot v'_j = \phi(M \cdot y') \cdot v'_i + \phi(M \cdot z') \cdot v'_j.\]
If $v'_i + v'_j$ is equal to 0 in $\geq \frac{d}{2}$ coordinates, then either $y = y'$ and $z = z'$ or $y = z$ and $y' = z'$. If $v'_i + v'_j$ is non-zero in $\geq \frac{d}{2}$ coordinates, then either $y = y'$ and $z = z'$ or $y = z'$ and $z = y'$.
\end{lemma}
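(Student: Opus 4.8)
The plan is to lean on two facts simultaneously: the uniqueness of base $5$ expansions with coefficients in $\{-2,\dots,2\}$, which lets us analyze the equation one ``sequence coordinate'' at a time, and the fact that any $\lceil d/2\rceil$ rows of $M$ are independent over $\R$, so that a nonzero vector of the code $\mathcal C := \{M\cdot u : u\in\Z^{\lceil d/2\rceil}\}$ has strictly fewer than $\lceil d/2\rceil$ zero coordinates. Write $a = M\cdot y$, $b = M\cdot z$, $a' = M\cdot y'$, $b' = M\cdot z'$, all in $\mathcal C$, so the hypothesis reads $\sum_{m=1}^{d}\big((v'_i)_m 5^{a_m d + m} + (v'_j)_m 5^{b_m d+m}\big)$ $=$ the same sum with $a,b$ replaced by $a',b'$. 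The contribution of sequence $X_m$ involves only base $5$ digits at positions $\equiv m \pmod d$, and those digits lie in $\{-2,\dots,2\}$; so uniqueness forces, for each $m$, $(v'_i)_m 5^{a_m d + m}+(v'_j)_m 5^{b_m d+m} = (v'_i)_m 5^{a'_m d+m}+(v'_j)_m 5^{b'_m d+m}$, and comparing the multiset of exponents gives a coordinatewise dichotomy: if $(v'_i + v'_j)_m = 0$ then either $a_m = a'_m,\ b_m = b'_m$ (call $m$ \emph{aligned}) or $a_m = b_m,\ a'_m = b'_m$; if $(v'_i + v'_j)_m \neq 0$ then either $m$ is aligned or $a_m = b'_m,\ b_m = a'_m$. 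I will also use repeatedly that $a=a'$ forces $b=b'$ and conversely (subtract $\phi(a)\cdot v'_i = \phi(a')\cdot v'_i$ from the hypothesis and compare digits, using that the entries of $v'_j$ are nonzero); since $d$ is odd, the two cases in the lemma are exhaustive and mutually exclusive. Thus it suffices to show that either $a=a'$ (the conclusion $y=y',\ z=z'$) or the relevant ``swap''.

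The heart of the matter — and the step I expect to be the obstacle — is passing from this coordinatewise information to a global statement, since a single non-aligned coordinate relates $a$ to $b$ but says nothing connecting the primed to the unprimed data. The resolution I plan is to form the correct linear combination of $a,b,a',b'$. Assume $a\neq a'$. In the case where $v'_i+v'_j$ has $\geq d/2$ (hence $\geq\lceil d/2\rceil$) zero coordinates, the vector $(a-b)-(a'-b')$ lies in $\mathcal C$ and, by the dichotomy, vanishes at every coordinate except possibly the non-aligned ones among the $<\lceil d/2\rceil$ coordinates with $(v'_i+v'_j)_m\neq 0$; hence it is $0$, i.e. $y-y' = z-z' =: w$. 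Setting $e := y-z$, we get $y' = y-w$, $z' = z-w$. In the other case one checks instead that $(a+b)-(a'+b')\in\mathcal C$ vanishes off the $<\lceil d/2\rceil$ coordinates with $(v'_i+v'_j)_m = 0$, so $y+z = y'+z'$; putting $w := y-y'$ (so $z-z' = -w$) and $e := y-z$ gives $y' = y-w$, $z' = z+w$.

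Finally I would substitute this parametrization back into the coordinatewise dichotomy and count, using the code bound a second time. In the ``many zeros'' case, every aligned coordinate gives $(M\cdot w)_m = 0$, and every non-aligned coordinate with $(v'_i+v'_j)_m\neq 0$ forces \emph{both} $(M\cdot w)_m = 0$ and $(M\cdot e)_m = 0$; so $M\cdot w$ vanishes at all coordinates where $v'_i+v'_j$ is nonzero, while each coordinate where $v'_i+v'_j$ vanishes forces $(M\cdot w)_m=0$ or $(M\cdot e)_m=0$. Since $w\neq 0$, $M\cdot w$ vanishes on fewer than $\lceil d/2\rceil$ coordinates, and a short count (using $d = 2\lceil d/2\rceil - 1$) then shows $M\cdot e$ vanishes on at least $\lceil d/2\rceil$ coordinates, whence $e = 0$, i.e. $y=z$ and therefore $y'=z'$. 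The ``many nonzeros'' case is symmetric, with $z' = z+w$: an aligned coordinate and any coordinate with $(v'_i+v'_j)_m = 0$ both force $(M\cdot w)_m = 0$, any coordinate with $(v'_i+v'_j)_m\neq 0$ forces $(M\cdot(e-w))_m = 0$, and the same count gives $\geq\lceil d/2\rceil$ coordinates with $(M\cdot(e-w))_m = 0$, so $e=w$, i.e. $z=y'$ and $z'=y$.

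To summarize, the two delicate points where the real work lies are: (i) choosing in the third paragraph the combination that is simultaneously a codeword and supported on the minority of coordinates, so that the minimum-distance property collapses it to zero and recovers the relation $y-y'=z-z'$ (resp. $y+z=y'+z'$); and (ii) making the final counting argument close, which works precisely because $d$ is odd, so that ``zero in $\geq d/2$ coordinates'' and ``nonzero in $\geq d/2$ coordinates'' are complementary and the shortfall of one set is exactly made up by the other relative to the threshold $\lceil d/2\rceil$. Everything else — the base $5$ bookkeeping, the multiset comparisons, and the elementary implications like $a=a'\Rightarrow b=b'$ — is routine.
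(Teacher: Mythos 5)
Your proof is correct (modulo the small slip noted below) and takes a genuinely different route from the paper's. The paper partitions $[d]$ into $EQUAL$ (coordinates with $(v'_i+v'_j)_m = 0$ and $a_m=b_m$, $a'_m=b'_m$), $SAME$ (your ``aligned''), and $DIFF$ (coordinates with $(v'_i+v'_j)_m\neq 0$ and $a_m=b'_m$, $b_m=a'_m$), then runs a four-way case split on which of these (if any) has size $\geq d/2$; the residual case (all three small) is closed by observing that $y-y'+z-z'$, $y-z+y'-z'$ and $y-y'-z+z'$ each annihilate the rows indexed by a union of two of the three sets, hence vanish, giving $y=y'=z=z'$. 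You instead first extract a global linear relation, $y-y'=z-z'$ (resp.\ $y+z=y'+z'$), by noting that $(a-b)-(a'-b')$ (resp.\ $(a+b)-(a'+b')$) is a codeword supported only on the minority of coordinates, hence zero; then you parametrize by $w$ and $e$ and finish with one more minimum-distance count. This makes the role of the Reed--Solomon minimum distance more transparent and replaces the paper's four-case analysis with a two-step (additivity, then count) argument, at the cost of an extra round of substitution. One slip to fix: in the ``many nonzeros'' case, the assertion that ``any coordinate with $(v'_i+v'_j)_m\neq 0$ forces $(M\cdot(e-w))_m=0$'' is true only for the \emph{non-aligned} such coordinates; an aligned coordinate there yields only $(M\cdot w)_m=0$. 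This does not affect your count: every coordinate lies either in $\{m : (M\cdot w)_m=0\}$, a set of size $<\lceil d/2\rceil$ when $w\neq 0$, or else is non-aligned with $(v'_i+v'_j)_m\neq 0$, so there remain $\geq\lceil d/2\rceil$ coordinates with $(M\cdot(e-w))_m=0$, as you need.
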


\begin{proof} We let $C$ denote the value $\phi(M \cdot y) \cdot v'_i + \phi(M \cdot z) \cdot v'_j$, which is equal to $\phi(M \cdot y') \cdot v'_i +  \phi(M \cdot z') \cdot v'_j$. We consider the base 5 expansion of $C$ with coefficients in $[-2,2]$. We let $n \in [d]$ denote a coordinate where $v'_i + v'_j $ is equal to 0. If the base 5 expansion of $C$ includes no terms from the sequence $X_n$, we may conclude that the $n^{th}$ coordinate of $M\cdot y$ and the $n^{th}$ coordinate of $M \cdot z$ are equal. In other words, if we let $M_n$ denote the $n^{th}$ row of $M$, we have that $y - z$ is orthogonal to $M_n$, as is $y' - z'$. We let $EQUAL$ denote the set of coordinates $n$ where $v'_i + v'_j$ is equal to 0 and no terms from $X_n$ appear in our base 5 expansion of $C$. We let $Null(EQUAL)$ denote the space in $\mathbb{R}^{\lceil \frac{d}{2} \rceil}$ of vectors orthogonal to all the rows $M_n$ of $M$ for $n \in EQUAL$. Then we have shown so far that $y - z$ and $y' - z'$ are in $Null(EQUAL)$.

We now consider a coordinate $n\in [d]$ where $v'_i + v'_j=0$ but we see two terms (of opposite sign) from the sequence $X_n$ in the base 5 expansion of $C$. Since these terms have different signs, we can tell which came from dotting with $v'_i$ and which came from dotting with $v'_j$. Thus, we must have that the $n^{th}$ coordinate of $M \cdot y$ and the $n^{th}$ coordinate of $M \cdot y'$ are equal, and similarly, the $n^{th}$ coordinates of $M \cdot z$ and $M \cdot z'$ must be equal. Thus, $y - y'$ and $z-z'$ are both orthogonal to $M_n$. We define the set $SAME$ to include all such coordinates $n$, and we let $Null(SAME)$ denote the space in $\mathbb{R}^{\lceil \frac{d}{2} \rceil}$ of vectors orthogonal to all the rows $M_n$ of $M$ for $n \in SAME$. We have shown that $y-y', z-z' \in Null(SAME)$.

Next, we consider a coordinate $n \in [d]$ where $v'_i + v'_j \neq 0$. In such coordinates, we see two terms of the same sign from the sequence $X_n$ in the base 5 expansion of $C$. There are then two possibilities: either $y - y'$ and $z-z'$ are both orthogonal to $M_n$, or $y - z'$ and $z-y'$ are both orthogonal to $M_n$. If $y - y'$ and $z-z'$ are both orthogonal to $M_n$, we add $n$ to the set $SAME$. If this does not hold, then we must have $y-z'$ and $z-y'$ both orthogonal to $M_n$, and we define a new set $DIFF$ to include such coordinates $n$. We let $Null(DIFF)$ denote the space in $\mathbb{R}^{\lceil \frac{d}{2} \rceil}$ of vectors orthogonal to all the rows $M_n$ of $M$ for $n \in DIFF$. Then we have that $y-z', z-y' \in Null(DIFF)$. We note that we have defined the sets $EQUAL$, $SAME$, and $DIFF$ so that they are disjoint, and their union is $[d]$ (all of the $d$ coordinates).

We now examine 4 possible cases:
\begin{enumerate}
  \item $|EQUAL| \geq \frac{d}{2}$
  \item $|SAME| \geq \frac{d}{2}$
  \item $|DIFF| \geq \frac{d}{2}$
  \item  $|EQUAL|, |SAME|, |DIFF| \leq \frac{d}{2}$.
\end{enumerate}

In case 1., $y-z$ and $y'-z'$ are each orthogonal to at least $\frac{d}{2}$ rows of $M$, so we must have $y = z$ and $y'=z'$. In case 2., $y-y'$ and $z-z'$ are each orthogonal to at least $\frac{d}{2}$ rows of $M$, so we must have $y = y'$ and $z = z'$. In case 3., $y -z'$ and $z-y'$ are each orthogonal to at least $\frac{d}{2}$ rows of $M$, so we must have $y = z'$ and $z = y'$.

In case 4., we note that $y - y' + z - z' \in Null(SAME) \cup Null(DIFF)$, $y - z + y' - z' \in Null(EQUAL) \cup Null(DIFF)$, and $y -y'-z+z' \in Null(SAME) \cup Null(EQUAL)$. Since $|EQUAL|, |SAME|, |DIFF| \leq \frac{d}{2}$, we have that $|SAME\cup DIFF|, |EQUAL \cup DIFF|, |SAME \cup EQUAL|$ are all $\geq \frac{d}{2}$. Hence, we have that: $y-y' + z-z' = 0 = y-z+y'-z' = y - y'-z+z'$, which implies that $y = y' = z = z'$.

Now, if $v'_i + v'_j$ is equal to 0 in $\geq \frac{d}{2}$ coordinates, then being exclusively in case 3. is impossible. Thus, we may conclude that either $y = y'$ and $z = z'$ or $y = z$ and $y' = z'$. If $v'_i + v'_j$ is nonzero in $\geq \frac{d}{2}$ of the coordinates, then being exclusively in case 1. is impossible, so either $y = y'$ and $z = z'$ or $y = z'$ and $z = y'$.

\end{proof}

This lemma has a few useful corollaries:

\begin{corollary} Each $W_i$ is a $B_2[1]$-set.
\end{corollary}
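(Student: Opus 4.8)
The plan is to deduce that each $W_i$ is a $B_2[1]$ set directly from Lemma \ref{lem:general} applied to the single vector pair $v'_i = v'_j = v_i$. The point is that a $B_2[1]$ set is one in which every integer has at most one representation as an unordered sum of two elements; so I must show that if two (ordered) pairs of elements of $W_i$ have the same sum, then the pairs agree as unordered pairs.

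First I would take any four elements $w_1, w_2, w_3, w_4 \in W_i$ with $w_1 + w_2 = w_3 + w_4$. By the definition of $W_i$ and the remark preceding Lemma \ref{lem:general}, each $w_m$ can be written as $\phi(M \cdot y_m) \cdot v_i$ for some $y_m \in \Z^{\lceil d/2 \rceil}$ (with $M \cdot y_m \in S$); write $y = y_1$, $z = y_2$, $y' = y_3$, $z' = y_4$. Then the hypothesis of Lemma \ref{lem:general} holds with $v'_i = v'_j = v_i$, so we examine $v'_i + v'_j = 2 v_i$. Since $v_i \in \{+1,-1\}^d$, the vector $2v_i$ is nonzero in all $d$ coordinates, hence in $\geq \frac{d}{2}$ of them. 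Lemma \ref{lem:general} (the ``non-zero'' case) therefore forces either $y = y'$ and $z = z'$, or $y = z'$ and $z = y'$; that is, $\{y,z\} = \{y',z'\}$ as unordered pairs. By Lemma \ref{lem:uniqueExpressions}, the map $y \mapsto \phi(M\cdot y)\cdot v_i$ is injective on the relevant domain, so $\{w_1,w_2\} = \{w_3,w_4\}$. Hence the representation of $w_1 + w_2$ as an unordered sum of two elements of $W_i$ is unique, so $W_i$ is a $B_2[1]$ set.

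The only point requiring any care is the translation between ``elements of $W_i$'' and the $y$-vectors: I should note that every element of $W_i$ arises from some $(i_1,\dots,i_d) \in S$, and every such tuple is of the form $M \cdot y$ for a (unique, by invertibility of the top $\lceil d/2 \rceil$ rows of $M$) $y$, so the substitution into Lemma \ref{lem:general} is legitimate and the conclusion about the $y$'s really does pin down the tuples in $S$. There is no real obstacle here — the substance is entirely in Lemma \ref{lem:general}, and this corollary is just the specialization $v'_i = v'_j$ with the observation that $2v_i$ has full support.
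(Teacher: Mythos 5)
Your proof is correct and follows exactly the paper's own argument: specialize Lemma~\ref{lem:general} to $v'_i = v'_j = v_i$, note that $2v_i$ has no zero coordinates, and read off that repeated sums in $W_i$ coincide as unordered pairs. The extra remark about injectivity of $y \mapsto \phi(M\cdot y)\cdot v_i$ is a small, harmless amplification of what the paper leaves implicit via Lemma~\ref{lem:uniqueExpressions}.
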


\begin{proof} We apply Lemma~\ref{lem:general} with $v_i' = v_i$ and $v'_j = v_i$. Since $2v_i$ is nonzero in all $d$ coordinates, we can conclude that either $y =y'$ and $z= z'$ or $y = z'$ and $z = y'$. This means that if $a+b$ is a sum of two elements of $W_i$, the only other way to express it as a sum of two elements of $W_i$ is as $b+a$. Hence $W_i$ is a $B_2[1]$ set.
\end{proof}

\begin{corollary} $W$ is a $B^\circ_2[2]$-set.
\end{corollary}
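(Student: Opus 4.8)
The plan is to show directly that no nonzero integer $t$ admits more than two representations as a difference $t = a-b$ with $a,b \in W$. Suppose instead that $(a_r,b_r)$, $r=1,2,3$, are three distinct ordered pairs in $W\times W$ with $a_r - b_r = t$; by Lemma~\ref{lem:uniqueExpressions} there are well-defined indices with $a_r \in W_{i_r}$, $b_r \in W_{j_r}$, and we may write $a_r = \phi(M y_r)\cdot v_{i_r}$ and $b_r = \phi(M z_r)\cdot v_{j_r}$. The single device driving the argument is to rewrite each difference identity $a_r - b_r = a_s - b_s$ as the sum identity $a_r + b_s = a_s + b_r$: the left side lies in $W_{i_r}+W_{j_s}$ and the right side in $W_{i_s}+W_{j_r}$, so Lemma~\ref{lem:sumSetDisjointness} forces $\{i_r,j_s\} = \{i_s,j_r\}$, and moreover the resulting equation, being of the form $\phi(Mp)\cdot v_{i_r} + \phi(Mq)\cdot v_{j_s} = \phi(Mp')\cdot v_{i_r} + \phi(Mq')\cdot v_{j_s}$, is exactly the hypothesis of Lemma~\ref{lem:general} applied with the two sign vectors $v_{i_r}$ and $v_{j_s}$. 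I would then split into two cases according to whether $i_1 = j_1$.

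If $i_1 \ne j_1$, then the constraint $\{i_1,j_2\} = \{i_2,j_1\}$ (and its analogue for index $3$) forces $i_1 = i_2 = i_3$ and $j_1 = j_2 = j_3$, so all three pairs lie in $W_i\times W_j$ with $i\ne j$. Rewriting $a_r - b_r = a_s - b_s$ as $\phi(M y_r)\cdot v_i + \phi(M z_r)\cdot(-v_j) = \phi(M y_s)\cdot v_i + \phi(M z_s)\cdot(-v_j)$ and applying Lemma~\ref{lem:general} with the sign vectors $v_i$ and $-v_j$, whose sum $v_i - v_j$ is nonzero in more than $\tfrac d2$ coordinates by Lemma~\ref{lem:vectorSumDistinctness}, forces $(y_s,z_s)$ to equal $(y_r,z_r)$ or $(z_r,y_r)$. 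Hence pairs $2$ and $3$ each coincide with pair $1$ or with the single ``swapped'' pair $(\phi(M z_1)\cdot v_i,\ \phi(M y_1)\cdot v_j)$, so two of the three coincide --- a contradiction.

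If $i_1 = j_1$, then applying the intersection argument to pair $1$ against an arbitrary representation $(a',b')$ with $a'\in W_i$, $b'\in W_j$ gives $\{i_1,j\} = \{i,i_1\}$, which forces $i=j$; thus every representation of $t$ has both terms in a single $W_i$. Moreover, applying Lemma~\ref{lem:general} to two representations inside the same $W_i$ (sign vectors $v_i,v_i$, whose sum is nonzero everywhere) and discarding the option that would force $y=z$ and hence $t=0$ shows the two representations coincide, so the index $i$ is different for distinct representations; thus $i_1,i_2,i_3$ are distinct. Now Lemma~\ref{lem:general} applied to $\phi(M y_1)\cdot v_{i_1} + \phi(M z_2)\cdot v_{i_2} = \phi(M z_1)\cdot v_{i_1} + \phi(M y_2)\cdot v_{i_2}$ (from $a_1+b_2 = a_2+b_1$), with sign vectors $v_{i_1},v_{i_2}$ whose sum is zero in more than $\tfrac d2$ coordinates since $i_1\ne i_2$, and again discarding the $t=0$ option, forces $y_2 = z_1$ and $z_2 = y_1$, i.e. $(a_2,b_2) = (\phi(M z_1)\cdot v_{i_2},\ \phi(M y_1)\cdot v_{i_2})$; likewise $(a_3,b_3) = (\phi(M z_1)\cdot v_{i_3},\ \phi(M y_1)\cdot v_{i_3})$. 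Subtracting the two expressions $t = \phi(M z_1)\cdot v_{i_2} - \phi(M y_1)\cdot v_{i_2}$ and $t = \phi(M z_1)\cdot v_{i_3} - \phi(M y_1)\cdot v_{i_3}$ gives $\phi(M z_1)\cdot(v_{i_2}-v_{i_3}) = \phi(M y_1)\cdot(v_{i_2}-v_{i_3})$; since $v_{i_2}-v_{i_3}$ has entries in $\{0,\pm 2\}$ and is nonzero in more than $\tfrac d2$ coordinates, uniqueness of base-$5$ expansions forces $M y_1$ and $M z_1$ to agree in more than $\tfrac d2$ (hence at least $\lceil\tfrac d2\rceil$) coordinates, so $M y_1 = M z_1$ by the error-correcting property of $S$, so $y_1 = z_1$, so $t = a_1 - b_1 = 0$ --- a contradiction. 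In either case three distinct representations are impossible, so $W$ is a $B^\circ_2[2]$ set.

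The step I expect to be the main obstacle is precisely the second case: Lemma~\ref{lem:general} only compares two representations built from the \emph{same} pair of sign vectors, whereas the three representations of $t$ naturally live in three unrelated sets $W_{i_1}^2, W_{i_2}^2, W_{i_3}^2$ with unrelated sign vectors. Converting each difference identity into a sum identity is what makes Lemmas~\ref{lem:sumSetDisjointness} and~\ref{lem:general} applicable (the former pinning down the index structure, the latter pinning down the tuples), and the remaining content is the check that the rigid ``swapped'' solutions produced against $v_{i_2}$ and against $v_{i_3}$ are incompatible unless $t=0$ --- the final base-$5$ / Reed--Solomon computation above.
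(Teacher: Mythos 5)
Your proof is correct and follows essentially the same strategy as the paper's: convert each difference identity into a sum identity so that Lemma~\ref{lem:sumSetDisjointness} pins down the index structure and Lemma~\ref{lem:general} pins down the tuples, then split into the off-diagonal case $i_1 \neq j_1$ and the diagonal case $i_1 = j_1$, showing a third representation forces $y_1 = z_1$ and hence $t = 0$. The only cosmetic deviation is your final base-5/Reed--Solomon computation comparing representations $2$ and $3$; the paper instead applies the swap argument a second time, and in fact you could have avoided the explicit computation by simply applying Lemma~\ref{lem:general} once more to $a_2 + b_3 = a_3 + b_2$, which yields $y_1 = z_1$ directly.
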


\begin{proof} We suppose that we have $y, z, y', z'$ such that
\[\phi(M \cdot y) \cdot v_i - \phi(M\cdot z) \cdot v_j = \phi(M \cdot y') \cdot v_h - \phi(M \cdot z') \cdot v_\ell.\]
By the same argument employed in the proof of Lemma~\ref{lem:sumSetDisjointness}, this can only occur when $v_i - v_j = v_h - v_\ell$, i.e. when $v_i + v_\ell = v_h + v_j$. Since the sums of these vectors are unique, we must have either:
\begin{enumerate}
  \item $v_i = v_h$ and $v_j = v_\ell$ (and $i \neq j$) or
  \item $v_j = v_i$ and $v_h = v_\ell$.
\end{enumerate}
In case 1., we have: $\phi(M \cdot y) \cdot v_i - \phi(M\cdot z) \cdot v_j = \phi(M \cdot y') \cdot v_i - \phi(M \cdot z') \cdot v_j$. We then apply Lemma~\ref{lem:general} with $v'_i = v_i$ and $v'_j = -v_j$. Then $v'_i + v'_j$ is nonzero in more than half of the coordinates (since $i \neq j$), so either $y = y'$ and $z = z'$ or $y = z'$ and $z = y'$. This gives us at most two ways of representing this value as a difference of two elements of $W$.

In case 2., we have: $\phi(M \cdot y) \cdot v_i - \phi(M\cdot z) \cdot v_i = \phi(M \cdot y') \cdot v_h - \phi(M \cdot z') \cdot v_h$. We can rearrange this to be:
\[\phi(M \cdot y) \cdot v_i + \phi(M \cdot z')\cdot v_h = \phi(M \cdot z) \cdot v_i + \phi(M \cdot y') \cdot v_h.\]
We then apply Lemma~\ref{lem:general} with $v'_i = v_i$, and $v'_j = v_h$ and the roles of $y, z, y', z'$ appropriately exchanged. If $i = h$, then $v_i + v_h$ is nonzero in all of the coordinates. In this case, we conclude that either $y = z$ and $y' = z'$ (in which case, the difference $\phi(M \cdot y)\cdot v_i - \phi(M \cdot z)\cdot v_i$ is 0), or $y = y'$ and $z = z'$ (in which case, we are looking at the very same representation of the difference). Neither of these cases results in an alternate way of expressing a nonzero element as a difference of elements in $W$.

If $i\neq h$, then $v_i + v_h$ is 0 in more than half of the coordinates. We conclude that either $y = z$ and $y' = z'$ (again, the difference being represented is then equal to 0), or $y = z'$ and $z = y'$. In this case, we see that we may have two ways of representing a nonzero value as a difference of two elements of $W$. We then ask, could we have more? In other words, could we have distinct representations
\[ \phi(M \cdot y) \cdot v_i - \phi(M \cdot z) \cdot v_i = \phi(M \cdot z) \cdot v_h - \phi(M \cdot y) \cdot v_h = \phi(M \cdot u)\cdot v_\ell - \phi(M \cdot w) \cdot v_m\]
for some $u, w, v_\ell, v_m$ where $y \neq z$? We first note that $v_m = v_\ell$ must then hold, again by the agrument employed in Lemma~\ref{lem:sumSetDisjointness}.

This gives us:
\[ \phi(M \cdot y) \cdot v_i - \phi(M \cdot z) \cdot v_i = \phi(M \cdot z) \cdot v_h - \phi(M \cdot y) \cdot v_h = \phi(M \cdot u)\cdot v_\ell - \phi(M \cdot w) \cdot v_\ell.\]
Applying the argument above with $v_i$ and $v_\ell$ instead of $v_h$, we conclude that if $y \neq z$, we must have $u = z$ and $w = y$. However, if we apply the above argument to $v_h$ and $v_\ell$ instead, we conclude that $u =y $ and $w = z$. Since these must simultaneously hold, we get that $y = z$, which is a contradiction. Putting it all together, we have now proven that only 0 can be represented as a difference of two elements of $W$ in more than 2 ways, so $W$ is a $B^\circ_2[2]$ set.

\end{proof}

We now have a rather complete understanding of the sums and differences of $W$. We have shown that $W$ is a $B^\circ_2[2]$ set and is a union of $k$ $B_2[1]$ sets. We also know that $W$ is not a $B_2[g]$ set for any $g$, since Lemma~\ref{lem:general} does reveal some repeated sums in $W$. For each $i\neq j$, we can get many representations of a single integer as a sum of an element in $W_i$ and an element of $W_j$ by examining sums of the form $\phi(M \cdot y) \cdot v_i + \phi(M \cdot y) \cdot v_j$. The value of this sum will only depend on the coordinates of $M \cdot y$ for which $v_i + v_j$ is nonzero, and this is less than half of the coordinates. This means that the sum does not fully determine $y$: in fact, there are infinitely many values $y'$ such that $M \cdot y'$ will agree with $M \cdot y$ in these coordinates where $v_i + v_j \neq 0$. This shows that for $i \neq j$, $W_i \cup W_j$ is not a $B_2[g]$ set for any $g$. Lemma~\ref{lem:general} also tells us that these repeated sums of the form $\phi(M \cdot y) \cdot v_i + \phi(M \cdot y) \cdot v_j = \phi(M \cdot y')\cdot v_i + \phi(M \cdot y')\cdot v_j$ are the \emph{only repeated sums} in $W+W$. Essentially, this means that $W'+W'$ will still be large for any subset $W'$ of $W$, even though $W$ is not a $B_2[g]$ set for any $g$. In fact, $W$ is not a union of $k-1$ $B_2[g]$ sets for any $g$, which we prove next:

\begin{lemma}
\label{lem:decomposition}
$W := W_1 \cup W_2 \cup \ldots \cup W_k$ is not a union of $k-1$ $B_2[g]$ sets, for any finite $g$.
\end{lemma}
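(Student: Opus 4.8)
The plan is to argue by contradiction. Suppose $W=A_1\cup\cdots\cup A_{k-1}$ with each $A_m$ a $B_2[g]$ set. Since every subset of a $B_2[g]$ set is again a $B_2[g]$ set, I may discard overlaps and assume this is a partition, so that each element $w\in W$ acquires a well-defined ``part'' $\chi(w)\in\{1,\dots,k-1\}$. To derive a contradiction it suffices to produce a single integer $C$ together with more than $g$ distinct unordered pairs $\{a,b\}\subseteq A_m$, for one fixed $m$, with $a+b=C$.

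The source of these repeated sums is the ``diagonal'' structure already identified just before the statement of Lemma~\ref{lem:decomposition}. For $y\in\mathbb{N}^{\lceil d/2\rceil}$ and $1\le j\le k$ write $e_j(y):=\phi(M\cdot y)\cdot v_j\in W_j$. By the base-$5$ uniqueness used in Lemma~\ref{lem:uniqueExpressions}, together with the fact that $M$ has full column rank, each $e_j$ is injective, and $e_i(y)\neq e_j(y)$ for $i\neq j$ (indeed the $W_j$ are pairwise disjoint). For $i\neq j$ let $N_{ij}\subseteq\mathbb{R}^{\lceil d/2\rceil}$ be the space of vectors orthogonal to every row $M_n$ of $M$ with $(v_i+v_j)_n\neq0$. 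Since $v_i+v_j$ has more than $d/2$ zero coordinates (Lemma~\ref{lem:vectorSumDistinctness}), this is an intersection of fewer than $\lceil d/2\rceil$ hyperplanes, so $\dim N_{ij}\ge 1$, and I fix a nonzero primitive integer vector $w_{ij}\in N_{ij}$. The one identity I need is that $e_i(y)+e_j(y)=\phi(M\cdot y)\cdot(v_i+v_j)$ depends only on the entries $(M\cdot y)_n=M_n\cdot y$ for which $(v_i+v_j)_n\neq0$; hence $e_i(y)+e_j(y)$ is unchanged when $y$ is translated by any element of $N_{ij}$, in particular by any integer multiple of $w_{ij}$.

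Now the counting step, which replaces the Ramsey-theoretic argument used in the previous section. Fix a large integer $N$ (to be chosen at the end in terms of $k$, $g$, $d$, and the $w_{ij}$) and set $B=\{1,\dots,N\}^{\lceil d/2\rceil}$. For each $y\in B$ the $k$ colors $\chi(e_1(y)),\dots,\chi(e_k(y))$ lie in a set of size $k-1$, so two of them coincide; record for $y$ a witnessing pair $\{i,j\}$ and the common color $m$. This colors $B$ with at most $\binom{k}{2}(k-1)$ colors, so some color class $E\subseteq B$ — with a fixed pair $(i,j)$ and fixed color $m$ — satisfies $|E|\ge N^{\lceil d/2\rceil}/\bigl(\binom{k}{2}(k-1)\bigr)$. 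On the other hand, the number of cosets of the subgroup $\mathbb{Z}w_{ij}$ meeting $B$ is $O(N^{\lceil d/2\rceil-1})$: apply a unimodular change of variables sending $w_{ij}$ to a standard basis vector and project off that coordinate, noting that $B$ maps into a box of side $O(N)$. Taking $N$ large enough, pigeonhole over these cosets forces some single coset $y_0+\mathbb{Z}w_{ij}$ to contain more than $g$ points of $E$.

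Finally, select distinct $y_1,\dots,y_{g+1}\in E$ lying on one coset $y_0+\mathbb{Z}w_{ij}$. For each $t$ we have $e_i(y_t),e_j(y_t)\in A_m$, and by the identity above the sums $e_i(y_t)+e_j(y_t)$ all equal a single value $C$, since the $y_t$ differ by integer multiples of $w_{ij}\in N_{ij}$. The unordered pairs $\{e_i(y_t),e_j(y_t)\}$ are pairwise distinct ($e_i$ is injective on the distinct $y_t$, and $e_i(y_t)\in W_i$, $e_j(y_t)\in W_j$ are distinct elements), so $C$ has more than $g$ representations as a sum of two elements of $A_m$, contradicting that $A_m$ is a $B_2[g]$ set. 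The step I expect to require the most care is the coset count, and it is exactly here that the relative-distance-greater-than-$\tfrac12$ property of the code $\{v_j\}$ — equivalently $\dim N_{ij}\ge1$ — is used: it is what makes each coset long and hence forces there to be few of them. One may also bypass the explicit count by applying the multidimensional van der Waerden (Gallai) theorem to the finite coloring $y\mapsto(\chi(e_1(y)),\dots,\chi(e_k(y)))$ of $\mathbb{N}^{\lceil d/2\rceil}$, using the configuration $\bigcup_{i<j}\{0,w_{ij},2w_{ij},\dots,g\,w_{ij}\}$; a monochromatic homothetic copy yields the same conclusion, since rescaling $w_{ij}$ keeps it inside $N_{ij}$.
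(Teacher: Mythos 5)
Your argument is correct and is essentially the paper's own proof: both pigeonhole over the $k$ elements indexed by each tuple in $S$ to extract a repeated-sum pair, then compare the count $\gtrsim n^{\lceil d/2\rceil}$ of tuples against the count $\lesssim n^{\lceil d/2\rceil - 1}$ of possible sum values (your coset count of the rank-one lattice $\mathbb{Z}w_{ij}$ is just a slightly more structured way of bounding those sum values, using exactly the same fact that $v_i+v_j$ has fewer than $\lceil d/2\rceil$ nonzero coordinates). The Gallai-theorem variant you mention at the end is a mild alternative packaging, but the core counting argument coincides with the paper's.
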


\begin{proof} We suppose that this is not true, i.e. there exist sets $A_1, \ldots, A_{k-1}$ such that $W = A_1 \cup A_2 \cup \ldots \cup A_{k-1}$, where each $A_i$ is a $B_2[g]$ set for some fixed $g$. We consider each $d$-tuple $(i_1, \ldots, i_d)$ in $S$. This corresponds to $k$ elements of $W$, namely $(x^1_{i_1}, \ldots, x^d_{i_d}) \cdot v_1, \ldots, (x^1_{i_1}, \ldots, x^d_{i_d}) \cdot v_k$. By the pigeonhole principle, some pair of these must belong to the same set $A_{\ell}$. This means we have a distinct way of achieving a sum of the form $(x^1_{i_1}, \ldots, x^d_{i_d}) \cdot v_i + (x^1_{i_1}, \ldots, x^d_{i_d}) \cdot v_j$ in $A_\ell + A_{\ell}$ (this is a distinct way of achieving this sum because elements of $W$ have unique representations as $(x^1_{i_1}, \ldots, x^d_{i_d}) \cdot v_i$ by Lemma~\ref{lem:uniqueExpressions}). We note that:
\[(x^1_{i_1}, \ldots, x^d_{i_d}) \cdot v_i + (x^1_{i_1}, \ldots, x^d_{i_d}) \cdot v_j = (x^1_{i_1}, \ldots, x^d_{i_d}) \cdot (v_i+v_j),\]
and that $(v_i + v_j)$ is 0 in $> \frac{d}{2}$ of the coordinates.

We consider tuples $(i_1, \ldots, i_d) \in S$ such that all of $i_1, \ldots, i_d$ are $\leq n$, for some fixed positive integer $n$. We first count how many of these tuples there are. We note that $(i_1, \ldots, i_d) = M \cdot (i'_1, \ldots, i'_{\lceil \frac{d}{2} \rceil})$ for some $(i'_1, \ldots, i'_{\lceil \frac{d}{2} \rceil}) \in \mathbb{N}^{\lceil \frac{d}{2} \rceil}$. Thus, each of $i_1, \ldots, i_d$ is a linear combination of the values $i'_1, \ldots, i'_{\lceil \frac{d}{2} \rceil}$, with positive coefficients all $\leq 2d$. Thus, if we choose any $i'_1, \ldots, i'_{\lceil \frac{d}{2} \rceil}$ values such that each is $\leq \frac{n}{2d \lceil \frac{d}{2} \rceil }$, we will have $i_1, \ldots, i_d \leq n$. This shows that there are at least $\left( \frac{n}{2d \lceil \frac{d}{2} \rceil }\right) ^{\lceil \frac{d}{2} \rceil}$ tuples $(i_1, \ldots, i_d) \in S$ such that all of $i_1, \ldots, i_d$ are $\leq n$.

As discussed above, each of these $d$-tuples in $S$ contributes a unique way of forming a sum $(x^1_{i_1}, \ldots, x^d_{i_d}) \cdot (v_i+v_j)$ in $A_{\ell} + A_\ell$ for some $A_\ell$. When all of $i_1, \ldots, i_d$ are $\leq n$, there are at most ${k \choose 2} n^{\lceil \frac{d}{2} \rceil -1}$ possibilities for the value of $(x^1_{i_1}, \ldots, x^d_{i_d}) \cdot (v_i+v_j)$. We can see this by noting that there are ${k \choose 2}$ possibilities for $v_i + v_j$, and each of them only has at most $\lceil \frac{d}{2} \rceil -1$ non-zero coordinates. In each such coordinate, we know our index value is at most $n$.

We note that $d$ is a fixed function of $k$, and we consider letting $n$ grow to infinity. Since $\left( \frac{n}{2d \lceil \frac{d}{2} \rceil }\right) ^{\lceil \frac{d}{2} \rceil}$ grows faster as a function of $n$ than ${k \choose 2} n^{\lceil \frac{d}{2} \rceil -1}$, and there are only $k$ possibilities for $A_\ell$, we must have that for any fixed $g$, there is some $A_\ell$ such that some element of $A_\ell + A_\ell$ can be expressed in $> g$ ways as a sum of two elements of $A_\ell$. This contradicts that $A_\ell$ is a $B_2[g]$ set. Hence we have proven that $W$ is not a union of $k-1$ $B_2[g]$ sets for any finite $g$.
\end{proof}

We have now shown:
\begin{theorem} $W \subseteq \Z$ is a union of $k$ $B_2[1]$ sets that cannot be decomposed as a union of $k-1$ $B_2[g]$ sets for any $g$. $W$ is also a $B^\circ_2[2]$ set.
\end{theorem}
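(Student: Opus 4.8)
The plan is to assemble the three assertions from the corollaries and lemmas already established, so the proof is essentially bookkeeping. First, that $W$ is a union of $k$ $B_2[1]$ sets: by construction $W = W_1 \cup \cdots \cup W_k$, so it suffices to recall that each $W_j$ is a $B_2[1]$ set. To see this I would invoke Lemma~\ref{lem:general} with $v'_i = v'_j = v_j$; since $2v_j$ has no zero coordinate, we are in the ``nonzero in $\geq \tfrac{d}{2}$ coordinates'' case, so any coincidence $\phi(M\cdot y)\cdot v_j + \phi(M\cdot z)\cdot v_j = \phi(M\cdot y')\cdot v_j + \phi(M\cdot z')\cdot v_j$ forces $y = y', z = z'$ or $y = z', z = y'$, i.e. the only other representation of a sum of two elements of $W_j$ is the reordering. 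Hence $W_j$ is $B_2[1]$, as recorded in the first corollary to Lemma~\ref{lem:general}.

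Second, that $W$ cannot be decomposed as a union of $k-1$ $B_2[g]$ sets for any finite $g$: this is exactly Lemma~\ref{lem:decomposition}, and I would use its pigeonhole-plus-counting mechanism. Given a putative decomposition $W = A_1 \cup \cdots \cup A_{k-1}$, every $d$-tuple $(i_1,\dots,i_d) \in S$ yields $k$ elements $(x^1_{i_1},\dots,x^d_{i_d})\cdot v_\ell$ of $W$, two of which must fall in a common $A_m$; by Lemma~\ref{lem:uniqueExpressions} this is a genuinely new representation of the sum $(x^1_{i_1},\dots,x^d_{i_d})\cdot(v_i+v_j)$. Restricting attention to tuples with all coordinates $\leq n$, the number of such tuples grows like $n^{\lceil d/2\rceil}$ (a tuple of $S$ is determined by $\lceil d/2\rceil$ free parameters, each rangeable up to a constant multiple of $n$), while the number of distinct possible values $(x^1_{i_1},\dots,x^d_{i_d})\cdot(v_i+v_j)$ is at most ${k \choose 2}\, n^{\lceil d/2\rceil - 1}$, since $v_i+v_j$ has at most $\lceil d/2\rceil - 1$ nonzero coordinates. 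As $d$ is fixed once $k$ is, letting $n \to \infty$ forces some $A_m$ to carry arbitrarily many pairs with a common sum, contradicting the $B_2[g]$ property.

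Third, that $W$ is a $B^\circ_2[2]$ set: this is the content of the second corollary to Lemma~\ref{lem:general}. The argument I would run: any coincidence of differences $\phi(M\cdot y)\cdot v_i - \phi(M\cdot z)\cdot v_j = \phi(M\cdot y')\cdot v_h - \phi(M\cdot z')\cdot v_\ell$ forces, by the base-$5$ uniqueness argument of Lemma~\ref{lem:sumSetDisjointness} together with the distinctness of pairwise sums from Lemma~\ref{lem:vectorSumDistinctness}, that either $v_i = v_h$ and $v_j = v_\ell$ (with $i \neq j$), or $v_i = v_j$ and $v_h = v_\ell$; rearranging each case into a \emph{sum} and applying Lemma~\ref{lem:general} leaves at most two distinct representations.

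The step I expect to be the main obstacle — and the one deserving real care — is closing the $B^\circ_2[2]$ claim, namely ruling out a \emph{third} representation of a fixed nonzero difference. A single application of Lemma~\ref{lem:general} only compares two representations at a time, so one must chain two instances: comparing a third representation against the first yields $u = z, w = y$, while comparing it against the second yields $u = y, w = z$; these force $y = z$, contradicting that the common difference is nonzero. Once this bookkeeping is done, the three parts combine immediately to give the theorem.
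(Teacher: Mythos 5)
Your proposal is correct and matches the paper's own approach: the theorem is assembled exactly from the two corollaries to Lemma~\ref{lem:general} (each $W_j$ is $B_2[1]$; $W$ is $B^\circ_2[2]$) together with Lemma~\ref{lem:decomposition}, and you reproduce the paper's chaining of two applications of Lemma~\ref{lem:general} to rule out a third representation of a nonzero difference. The paper records this theorem as a summary of the preceding results, with no new argument required, and your bookkeeping tracks it faithfully.
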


By employing the same counting argument as above for a fixed $n$ (sufficiently large with respect to $k$ and $g$), we can restate our result in the context of finite sets. We let $W_{n,k}$ denote the finite subset of $W$ formed by restricting to tuples $(i_1, \ldots, i_d) \in S$ such that $i_1, \ldots, i_d \leq n$. (Here we make the dependence on $k$ explicit.)

\begin{theorem}
\label{thm:main}
For any positive integers $g$ and $k$, we can choose $n$ sufficiently large so that the finite set $W_{n,k} \subseteq \Z$ is a $B^\circ_2[2]$ set that is a union of $k$ $B_2[1]$ sets, but cannot be decomposed as a union of $k-1$ $B_2[g]$ sets.
\end{theorem}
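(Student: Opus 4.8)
The plan is to deduce the finite statement from the infinite construction already built: the two structural properties transfer to subsets for free, and the non-decomposability is obtained by running the counting argument of Lemma~\ref{lem:decomposition} with the parameter $n$ held fixed and the constants tracked explicitly.

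First I would dispatch the easy parts. Since $W_{n,k}\subseteq W$ and $W$ is a $B^\circ_2[2]$ set, and since any subset of a $B^\circ_2[g]$ set is again a $B^\circ_2[g]$ set (fewer elements can only decrease the number of difference representations), $W_{n,k}$ is a $B^\circ_2[2]$ set. Similarly, writing $W_{n,k}=\bigcup_{j=1}^k\bigl(W_j\cap W_{n,k}\bigr)$ and using that each $W_j$ is a $B_2[1]$ set (shown above as a corollary of Lemma~\ref{lem:general}) together with the fact that a subset of a $B_2[1]$ set is a $B_2[1]$ set, we get that $W_{n,k}$ is a union of $k$ $B_2[1]$ sets. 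Note also that $W_{n,k}$ inherits the uniqueness of representations from Lemma~\ref{lem:uniqueExpressions}, which will be needed below.

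The substance is the last clause. Suppose for contradiction that $W_{n,k}=A_1\cup\cdots\cup A_{k-1}$ with each $A_\ell$ a $B_2[g]$ set. As in the proof of Lemma~\ref{lem:decomposition}, the $d$-tuples $(i_1,\ldots,i_d)\in S$ with every coordinate $\le n$ number at least $N:=\bigl(\tfrac{n}{2d\lceil d/2\rceil}\bigr)^{\lceil d/2\rceil}$, and for each such tuple the pigeonhole principle applied to the $k$ elements $(x^1_{i_1},\ldots,x^d_{i_d})\cdot v_1,\ldots,(x^1_{i_1},\ldots,x^d_{i_d})\cdot v_k$ distributed among the $k-1$ sets $A_\ell$ produces an index $\ell$ and a pair $\{i',j'\}$ with $i'\neq j'$ such that $(x^1_{i_1},\ldots,x^d_{i_d})\cdot v_{i'}$ and $(x^1_{i_1},\ldots,x^d_{i_d})\cdot v_{j'}$ both lie in $A_\ell$. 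These two elements are distinct (by uniqueness of base $5$ expansions, since $v_{i'}\neq v_{j'}$), so they form a genuine unordered pair of elements of $A_\ell$ summing to $(x^1_{i_1},\ldots,x^d_{i_d})\cdot(v_{i'}+v_{j'})$, and by Lemma~\ref{lem:uniqueExpressions} distinct tuples give distinct such pairs. On the other hand, because $v_{i'}+v_{j'}$ has at most $\lceil d/2\rceil-1$ nonzero coordinates (Lemma~\ref{lem:vectorSumDistinctness}, recalling $d$ is odd), the value of this sum depends only on the $\le\lceil d/2\rceil-1$ index values in those coordinates, each of which is $\le n$; so there are at most ${k\choose 2}\,n^{\lceil d/2\rceil-1}$ possible sum-values in total. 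Pigeonholing over the $k-1$ choices of $\ell$ and these sum-values, some single element of some $A_\ell+A_\ell$ receives at least $N\big/\bigl((k-1){k\choose 2}n^{\lceil d/2\rceil-1}\bigr)$ distinct representations as a sum of two elements of $A_\ell$.

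Finally I would pick $n$. The last quantity is $\asymp n$ as $n\to\infty$ with $k$ (hence $d$) fixed; explicitly it exceeds $g$ as soon as $n>g\,(k-1){k\choose 2}\bigl(2d\lceil d/2\rceil\bigr)^{\lceil d/2\rceil}$. For any such $n$ this contradicts $A_\ell$ being a $B_2[g]$ set, so no decomposition into $k-1$ $B_2[g]$ sets exists, completing the proof. I expect no genuine mathematical obstacle here beyond Lemma~\ref{lem:decomposition}; the only delicate point is the bookkeeping, namely confirming that the counted representations are genuinely distinct as unordered pairs (this is exactly where Lemma~\ref{lem:uniqueExpressions} is used) and that the number of distinct sum-values is correctly bounded by the count of nonzero coordinates of $v_{i'}+v_{j'}$.
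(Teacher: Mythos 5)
Your proof is correct and follows the same route the paper indicates: it carries over the $B^\circ_2[2]$ and union-of-$B_2[1]$ properties by heredity, and then re-runs the counting argument of Lemma~\ref{lem:decomposition} with the constants made explicit to pin down how large $n$ must be. The paper itself only remarks ``by employing the same counting argument as above for a fixed $n$,'' so your write-up simply fleshes out that remark.
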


\section{Adapting Our Construction for Mixed Unions}
In the previous section, we constructed a set $W\subset Z$ for each $k$ such that $W$ could not be decomposed as a union of $k-1$ $B_2[g]$ sets for any $g$. However, our $W$ is a $B^\circ_2[2]$ set, and we would like to arrive at a set in $\mathbb{Z}$ which cannot be decomposed as a mixed union of $k$ $B_2[g]$ and $B^\circ_2[g]$ sets for each $k$. Constructing such a set will put us well on our way toward obtaining an explicit counterexample to the weak anti-Freiman conjecture. To accomplish this, we will first adjust our techniques to obtain a $B_2[2]$ set $W^\circ \subseteq \Z$ for each $k$ that cannot be decomposed as a union of $k-1$ $B^\circ_2[g]$ sets for any $g$. We will then consider $W^\circ \times W$ in $\Z^2$ for each $k$, and show that this cannot be decomposed as a mixed union of $\frac{k}{3}-1$ $B_2[g]$ and $B^\circ_2[g]$ sets for any $g$.

For each positive integer $k$, we set $d = k$ and we let $v_j$ be the vector in $\{1,-1\}^d$ with a $-1$ in the $j^{th}$ coordinate and 1's in all other coordinates. We note that for $k \geq 5$, $v_j$ and $v_h$ will agree in $> \frac{d}{2}$ coordinates for all $1\leq j,h \leq k$. We define the sequences $X_1, \ldots, X_d$ and the set $S \subset \Z^d$ as in the previous section. For each $i$ from 1 to $k$, we define:
\[W^\circ_j := \{ (x^1_{i_1}, \ldots, x^d_{i_d}) \cdot v_j : (i_1, \ldots, i_d) \in S\}.\]
We define $W^\circ := W^\circ_1 \cup W^\circ_2 \ldots \cup W^\circ_k$. We now prove the relevant properties of $W^\circ$. The dependence of $W^\circ$ on $k$ is implicit.

\begin{lemma}
\label{lem:uniqueExpressions2}
Each element of $W^\circ$ has a unique expression as $(x^1_{i_1}, \ldots, x^d_{i_d}) \cdot v_j$ for $(i_1, \ldots, i_d) \in S$ and $1 \leq j \leq k$. In particular, the sets $W^\circ_j$ are disjoint.
\end{lemma}

\begin{proof} This is the same as the proof of Lemma~\ref{lem:uniqueExpressions}.
\end{proof}

\begin{lemma} For each $j$, $W^\circ_j$ is a $B^\circ_2[1]$ set.
\end{lemma}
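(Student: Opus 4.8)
The plan is to mimic the proof that each $W_i$ is a $B_2[1]$ set, but now working with differences instead of sums, and exploiting that each $v_j$ has exactly one coordinate equal to $-1$, so that $2v_j$ is \emph{not} the relevant vector --- rather, we should invoke Lemma~\ref{lem:general} with the right sign choices. Concretely, suppose $\phi(M\cdot y)\cdot v_j - \phi(M\cdot z)\cdot v_j = \phi(M\cdot y')\cdot v_j - \phi(M\cdot z')\cdot v_j$ with this common value nonzero. Rearranging gives $\phi(M\cdot y)\cdot v_j + \phi(M\cdot z')\cdot v_j = \phi(M\cdot z)\cdot v_j + \phi(M\cdot y')\cdot v_j$, which is a repeated sum within $W^\circ_j$. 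Applying the already-proved corollary that each $W^\circ_j$ (viewed through Lemma~\ref{lem:general} with $v_i' = v_j$, $v_j' = v_j$, so $v_i' + v_j' = 2v_j$ is nonzero in all $d$ coordinates) is a $B_2[1]$ set, we conclude that either $y = z$ and $y' = z'$ --- forcing the difference to be $0$, contrary to assumption --- or $y = y'$ and $z = z'$, which is the original representation. Hence a nonzero difference of two elements of $W^\circ_j$ has a unique representation, so $W^\circ_j$ is a $B^\circ_2[1]$ set.

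First I would state explicitly that, by Lemma~\ref{lem:uniqueExpressions2}, every element of $W^\circ_j$ is $\phi(M\cdot y)\cdot v_j$ for a unique $y \in \Z^{\lceil d/2\rceil}$ (equivalently a unique $d$-tuple in $S$), so that counting distinct representations of a difference amounts to counting the distinct ordered pairs $(y,z)$ realizing it. Then I would write down the putative pair of representations of a nonzero value as above, perform the one-line rearrangement turning the difference equation into a sum equation, and quote the $B_2[1]$ property of $W^\circ_j$ that has already been established in this section's setup (the same corollary as before applies verbatim since $v_j + v_j = 2v_j$ has all coordinates nonzero, regardless of the new choice of the $v_j$'s). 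The final step is to observe that of the two alternatives returned by Lemma~\ref{lem:general}, the one with $y=z$, $y'=z'$ is excluded because it makes the represented difference zero.

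The main obstacle --- really more of a bookkeeping subtlety than a genuine difficulty --- is making sure the sign manipulation is carried out correctly: the difference $\phi(M\cdot y)\cdot v_j - \phi(M\cdot z)\cdot v_j$ must be matched with $\phi(M\cdot y')\cdot v_j - \phi(M\cdot z')\cdot v_j$ and then rearranged so that what remains is genuinely a \emph{sum} of two elements of $W^\circ_j$ on each side (i.e.\ the roles of which variable is ``added'' versus ``subtracted'' are tracked consistently), so that the $B_2[1]$ corollary can be applied directly without re-deriving anything from the base-$5$ uniqueness. One should also note that this argument does not need the error-correcting structure of $S$ at all for a single $W^\circ_j$; the code property will only be needed later when handling $W^\circ$ as a whole and the mixed unions. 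I would therefore keep the proof to a few lines, referencing the $B_2[1]$ corollary and Lemma~\ref{lem:general}.

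\begin{proof}
By Lemma~\ref{lem:uniqueExpressions2}, each element of $W^\circ_j$ is $\phi(M\cdot y)\cdot v_j$ for a unique $y\in\Z^{\lceil d/2\rceil}$. Suppose a nonzero integer has two representations as a difference of elements of $W^\circ_j$:
\[\phi(M\cdot y)\cdot v_j - \phi(M\cdot z)\cdot v_j = \phi(M\cdot y')\cdot v_j - \phi(M\cdot z')\cdot v_j.\]
Rearranging, we obtain
\[\phi(M\cdot y)\cdot v_j + \phi(M\cdot z')\cdot v_j = \phi(M\cdot z)\cdot v_j + \phi(M\cdot y')\cdot v_j,\]
which expresses a single integer as a sum of two elements of $W^\circ_j$ in two ways. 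Applying Lemma~\ref{lem:general} with $v'_i = v_j$ and $v'_j = v_j$, we note that $v'_i + v'_j = 2v_j$ is nonzero in all $d$ coordinates, so we conclude that either $y = z$ and $y' = z'$, or $y = y'$ and $z = z'$. In the first case, the difference $\phi(M\cdot y)\cdot v_j - \phi(M\cdot z)\cdot v_j$ equals $0$, contrary to assumption. Hence $y = y'$ and $z = z'$, so the two representations coincide. Therefore every nonzero integer has at most one representation as a difference of two elements of $W^\circ_j$, i.e.\ $W^\circ_j$ is a $B^\circ_2[1]$ set.
\end{proof}
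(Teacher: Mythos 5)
Your proof is correct and is essentially the paper's argument with a cosmetic bookkeeping change: the paper applies Lemma~\ref{lem:general} directly to the difference equation with $v'_i = v_j$ and $v'_j = -v_j$ (so $v'_i + v'_j = 0$ in all coordinates, invoking the ``$\geq d/2$ zeros'' branch), whereas you first rearrange the difference into a sum and apply the lemma with $v'_i = v'_j = v_j$ (so $v'_i + v'_j = 2v_j$ is nonzero everywhere, invoking the other branch). Both routes land on the identical dichotomy ``$y=y',\,z=z'$ or $y=z,\,y'=z'$,'' and the rest is the same, so this is the same proof up to the choice of which case of the lemma to cite. Your side remark that the error-correcting structure of $S$ is not actually needed for a single $W^\circ_j$ is also accurate.
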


\begin{proof} We can represent any element of $W^\circ_j$ as $\phi(M \cdot y) \cdot v_j$ for some vector $y \in Z^{\lceil \frac{d}{2} \rceil}$. We suppose that there are vectors $y, z, y', z'$ such that:
\[\phi(M \cdot y ) \cdot v_j - \phi(M \cdot z) \cdot v_j = \phi(M \cdot y')\cdot v_j - \phi(M \cdot z')\cdot v_j.\]
We now apply Lemma~\ref{lem:general} with $v'_i = v_j$ and $v'_j = -v_j$. Since $v_j - v_j$ is 0 in all of the coordinates, we conclude that either $y = y'$ and $z = z'$ (so we do not get a new way of representing the value as a difference) or $y = z$ and $y' = z'$ (in which case, we are representing 0). Therefore, every nonzero value can be represented in at most one way as a difference of two elements of $W^\circ_j$.
\end{proof}

\begin{lemma} For $k\geq 5$, $W^\circ$ is a $B_2[2]$ set.
\end{lemma}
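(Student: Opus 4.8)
The plan is to mirror the proof that $W$ is a $B^\circ_2[2]$ set, but with the roles of sums and differences interchanged, exploiting that our new vectors $v_j$ now \emph{agree} (rather than being opposite) in more than half the coordinates. So suppose we have $y,z,y',z'$ with
\[\phi(M\cdot y)\cdot v_i + \phi(M\cdot z)\cdot v_j = \phi(M\cdot y')\cdot v_h + \phi(M\cdot z')\cdot v_\ell.\]
First I would run the base-5 uniqueness argument from Lemma~\ref{lem:sumSetDisjointness} to force $v_i + v_j = v_h + v_\ell$. Here the relevant ``distinctness of pairwise sums'' fact is not Lemma~\ref{lem:vectorSumDistinctness} (which applies to the Hadamard vectors), so I would first record a short observation: for the vectors $v_j = \mathbf{1} - 2e_j$, we have $v_i + v_j = 2\,\mathbf{1} - 2e_i - 2e_j$, and this determines the unordered pair $\{i,j\}$ (the two coordinates where the sum is $0$, when $i\neq j$; and $v_i + v_i = 2\,\mathbf{1} - 4e_i$ is distinguishable from all of these). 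Hence $v_i + v_j = v_h + v_\ell$ forces either (1) $\{v_i,v_j\} = \{v_h,v_\ell\}$ as multisets with $i\neq j$, or (2) $v_i = v_j$ and $v_h = v_\ell$.

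In case (1), after possibly swapping $(y',z')$ with $(z',y')$ we may assume $v_h = v_i$ and $v_\ell = v_j$, and then Lemma~\ref{lem:general} applies directly with $v'_i = v_i$, $v'_j = v_j$: since $i\neq j$ and $k\geq 5$, the sum $v_i + v_j$ is $0$ in exactly two coordinates, which is $\leq \tfrac{d}{2}$, i.e. $v_i+v_j$ is \emph{nonzero} in $\geq \tfrac{d}{2}$ coordinates, so we conclude $y=y', z=z'$ or $y=z', z=y'$ — at most the two ``obvious'' representations. In case (2), $v_i = v_j =: v$ and $v_h = v_\ell =: w$, so the identity reads $\phi(M\cdot y)\cdot v + \phi(M\cdot z)\cdot v = \phi(M\cdot y')\cdot w + \phi(M\cdot z')\cdot w$. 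If $v = w$ we apply Lemma~\ref{lem:general} with $v'_i = v'_j = v$; since $2v$ is nonzero everywhere we again get only $\{y,z\} = \{y',z'\}$. If $v\neq w$, then $v + w$ is $0$ in $> \tfrac{d}{2}$ coordinates (by the agreement property of our new vectors for $k\geq 5$), so Lemma~\ref{lem:general} with $v'_i = v$, $v'_j = w$ forces either $y = y', z = z'$ (the same representation, since then both sides are a sum over $W^\circ_i$ with the same tuples... wait — here we need care, because $v\neq w$ means the two representations use different $W^\circ$'s) or $y = z, y' = z'$, i.e. $\phi(M\cdot y)\cdot v + \phi(M\cdot y)\cdot v = \phi(M\cdot y')\cdot w + \phi(M\cdot y')\cdot w$: a genuinely different representation of the same integer.

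So the one remaining point — and I expect this to be the main obstacle, exactly as in the $B^\circ_2[2]$ proof — is to rule out a \emph{third} representation and show the count never exceeds $2$. Concretely, suppose a value has the two representations $\phi(M\cdot y)\cdot v + \phi(M\cdot y)\cdot v$ and $\phi(M\cdot y')\cdot w + \phi(M\cdot y')\cdot w$ with $v\neq w$ (forced into case (2), $v+w$ mostly zero), and also a representation $\phi(M\cdot u)\cdot v_s + \phi(M\cdot u')\cdot v_t$. Pairing this third representation against each of the first two and invoking the case analysis again, I would derive incompatible equalities (as in the final paragraph of the $B^\circ_2[2]$ corollary: one pairing forces $u' = y$-type relations, the other forces $u' = y'$-type, and combining them collapses $y = y'$, contradicting that the two representations were distinct). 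The only subtlety beyond the earlier argument is bookkeeping which of $v_s, v_t$ equals $v$ versus $w$, handled by the same base-5 uniqueness step. Having shown every integer has at most $2$ representations as a sum of two elements of $W^\circ$, we conclude $W^\circ$ is a $B_2[2]$ set.

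\begin{proof}
We first record that for our vectors $v_j = (1,\dots,1) - 2e_j$ (with $e_j$ the $j$th standard basis vector), the pairwise sums determine the unordered pair: for $i\neq j$, $v_i + v_j$ equals $0$ precisely in coordinates $i$ and $j$ and equals $2$ elsewhere, while $v_i + v_i = (2,\dots,2) - 4e_i$ has a $-2$ in coordinate $i$; hence $v_i + v_j = v_h + v_\ell$ implies that either $\{i,j\} = \{h,\ell\}$ with $i\neq j$, or $v_i = v_j$ and $v_h = v_\ell$.

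Now suppose $\phi(M\cdot y)\cdot v_i + \phi(M\cdot z)\cdot v_j = \phi(M\cdot y')\cdot v_h + \phi(M\cdot z')\cdot v_\ell$ for tuples $y,z,y',z' \in \Z^{\lceil d/2\rceil}$. By the base-5 uniqueness argument used in the proof of Lemma~\ref{lem:sumSetDisjointness}, comparing, in each coordinate $n$, the number of $+1$'s and $-1$'s among the relevant entries of $v_i,v_j$ versus $v_h,v_\ell$, we get $v_i + v_j = v_h + v_\ell$.

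\textbf{Case 1: $\{i,j\} = \{h,\ell\}$ with $i\neq j$.} After relabeling we may assume $v_h = v_i$, $v_\ell = v_j$. Since $i\neq j$ and $k\geq 5$, $v_i + v_j$ is $0$ in exactly $2 \leq d/2$ coordinates, hence nonzero in $\geq d/2$ coordinates. By Lemma~\ref{lem:general} (with $v'_i = v_i$, $v'_j = v_j$), either $y = y'$ and $z = z'$, or $y = z'$ and $z = y'$; both give the same sum $\phi(M\cdot y)\cdot v_i + \phi(M\cdot z)\cdot v_j$ up to reordering summands.

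\textbf{Case 2: $v_i = v_j =: v$ and $v_h = v_\ell =: w$.} If $v = w$, apply Lemma~\ref{lem:general} with $v'_i = v'_j = v$; since $2v$ is nonzero in all coordinates, $\{y,z\} = \{y',z'\}$, again the same representation. If $v \neq w$, then for $k\geq 5$ the vectors $v$ and $w$ agree in $> d/2$ coordinates, so $v + w$ is $0$ in $> d/2$ coordinates; Lemma~\ref{lem:general} (with $v'_i = v$, $v'_j = w$) gives either $y = y'$, $z = z'$, or $y = z$, $y' = z'$. In the first subcase the two representations are $\phi(M\cdot y)\cdot v + \phi(M\cdot z)\cdot v$ and $\phi(M\cdot y)\cdot w + \phi(M\cdot z)\cdot w$; in the second, they are $2\phi(M\cdot y)\cdot v$ and $2\phi(M\cdot y')\cdot w$.

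Thus any integer has at most the following representations as a sum of two elements of $W^\circ$: a single pair $\{a,b\} \subseteq W^\circ$ with both elements in the same $W^\circ_i$ (Case 1, Case 2 with $v=w$), or possibly two such, namely $\{\phi(M\cdot y)\cdot v,\ \phi(M\cdot z)\cdot v\}$ and $\{\phi(M\cdot y')\cdot w,\ \phi(M\cdot z')\cdot w\}$ with $v\neq w$. To see that a third representation is impossible, suppose a value equals $\phi(M\cdot u)\cdot v_s + \phi(M\cdot u')\cdot v_t$ in addition to the above two (with $v\neq w$). Pairing this representation with $2\phi(M\cdot y)\cdot v$ (Case 2 forces $v_s = v_t$; if $v_s = v$ we get $\{u,u'\} = \{y,z\}$-type relations, if $v_s = w$ then $u = u'$ and $\phi(M\cdot u)\cdot w$ matches $\phi(M\cdot y')\cdot w$), and likewise pairing it with $2\phi(M\cdot y')\cdot w$, we obtain in all sub-cases that the two original representations coincide, a contradiction. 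Hence $W^\circ$ is a $B_2[2]$ set.
\end{proof}
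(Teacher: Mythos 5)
Your Cases~1 and~2 (with $v=w$) are handled correctly and together cover all cases, so you do reach the right conclusion. In fact those cases track the paper's own argument, which first notes that the pairwise sums $v_i + v_j$ (diagonals included) are distinct, invokes the base-$5$ argument of Lemma~\ref{lem:sumSetDisjointness} to conclude the sum sets $W^\circ_i+W^\circ_j$ are pairwise disjoint and hence that any two representations of a given integer share the same unordered pair $\{i,j\}$, and then applies Lemma~\ref{lem:general} with $v'_i=v_i$, $v'_j=v_j$.

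The subcase $v\neq w$ of your Case~2, however, is both vacuous and mishandled. It is vacuous because if $v_i=v_j=v$, $v_h=v_\ell=w$, and $v_i+v_j=v_h+v_\ell$, then $2v=2w$ forces $v=w$; your opening observation actually shows $\{i,j\}=\{h,\ell\}$ in all cases, but you stated a weaker dichotomy and so did not notice. Consequently your entire final paragraph about excluding a ``third representation'' is unnecessary. More importantly, the step inside the subcase would not work even if the subcase occurred: the equation
\[\phi(M\cdot y)\cdot v + \phi(M\cdot z)\cdot v = \phi(M\cdot y')\cdot w + \phi(M\cdot z')\cdot w\]
has $v$ multiplying both terms on the left and $w$ both terms on the right, which is not the form required by Lemma~\ref{lem:general} (there $v'_i$ and $v'_j$ must each appear once on each side). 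In the paper's $B^\circ_2[2]$ corollary the corresponding \emph{difference} equation can be put into the required form by moving one term across the equals sign, but for the sum equation above that rearrangement flips a sign, so no choice of $v'_i,v'_j\in\{\pm v,\pm w\}$ produces a matching hypothesis. You flagged this worry in your preamble (``wait --- here we need care'') and then applied the lemma anyway. The clean route is the paper's: disjointness of the sum sets removes all cross-terms at the outset, after which a single application of Lemma~\ref{lem:general} finishes.
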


\begin{proof} We note that the sums $v_i + v_j$ are distinct (e.g. $i$ and $j$ can be determined from the sum as the two coordinates where the sum is 0 for $i\neq j$). As shown in Lemma~\ref{lem:sumSetDisjointness}, this implies that the sets $W^\circ_i + W^\circ_j$ are disjoint. Therefore, it suffices to consider vectors $y, z, y', z' \in \Z^{\lceil \frac{d}{2} \rceil}$ such that:
\[\phi(M \cdot y) \cdot v_i + \phi(M \cdot z) \cdot v_j = \phi(M \cdot y') \cdot v_i + \phi(M \cdot z') \cdot v_j.\]
Now we can apply Lemma~\ref{lem:general} with $v'_i = v_i$ and $v'_j = v_j$. Since $k \geq 5$, $v_i + v_j$ will be nonzero in more than half the coordinates, so either $y = y'$ and $z = z'$ or $y = z'$ and $z = y'$. This gives us at most 2 ways of representing any value as a sum of two elements of $W^\circ$, so $W^\circ$ is a $B_2[2]$ set.
\end{proof}

\begin{lemma} For $k \geq 5$, $W^\circ$ cannot be decomposed as a union of $k-1$ $B^\circ_2[g]$ sets for any $g$.
\end{lemma}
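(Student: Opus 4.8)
The plan is to mirror the counting argument used in the proof of Lemma~\ref{lem:decomposition}, but now tracking repeated \emph{differences} rather than repeated sums. Suppose for contradiction that $W^\circ = A_1 \cup \cdots \cup A_{k-1}$ with each $A_\ell$ a $B^\circ_2[g]$ set for some fixed $g$. For each $d$-tuple $(i_1, \ldots, i_d) \in S$ we have $k$ associated elements $(x^1_{i_1}, \ldots, x^d_{i_d}) \cdot v_1, \ldots, (x^1_{i_1}, \ldots, x^d_{i_d}) \cdot v_k$ of $W^\circ$, which are distinct by Lemma~\ref{lem:uniqueExpressions2}. By the pigeonhole principle some pair lands in a common $A_\ell$, producing a representation of $(x^1_{i_1}, \ldots, x^d_{i_d}) \cdot (v_i - v_j)$ as a difference of two elements of $A_\ell$. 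The crucial point is that for the vectors $v_j$ chosen here ($-1$ in coordinate $j$, $+1$ elsewhere), $v_i - v_j$ is supported on exactly the two coordinates $i$ and $j$ (it has a $-2$ in coordinate $j$ and a $+2$ in coordinate $i$, and $0$ elsewhere), so this difference is a \emph{nonzero} element of $\Z$ whenever $i \neq j$, and its value depends only on $i_i$ and $i_j$.

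\textbf{The counting.} As in Lemma~\ref{lem:decomposition}, restrict to tuples $(i_1, \ldots, i_d) \in S$ with every $i_m \leq n$; writing $(i_1, \ldots, i_d) = M \cdot (i'_1, \ldots, i'_{\lceil d/2 \rceil})$, any choice of $i'_1, \ldots, i'_{\lceil d/2 \rceil}$ each at most $\tfrac{n}{2d \lceil d/2 \rceil}$ works, so there are at least $\bigl(\tfrac{n}{2d\lceil d/2\rceil}\bigr)^{\lceil d/2 \rceil}$ such tuples. On the other hand, a value $(x^1_{i_1}, \ldots, x^d_{i_d}) \cdot (v_i - v_j)$ with all $i_m \leq n$ is determined by the choice of $v_i - v_j$ (at most $\binom{k}{2}$ options) together with the two relevant index values $i_i, i_j \leq n$, so there are at most $\binom{k}{2} n^2$ possible difference values. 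Since $d$ is a fixed function of $k$ with $\lceil d/2 \rceil \geq 3$ once $k \geq 5$, the tuple count grows faster in $n$ than $\binom{k}{2} n^2$; hence for $n$ large, some single difference value arises from arbitrarily many tuples, and by pigeonhole among the $k-1$ sets $A_\ell$, one $A_\ell$ exhibits that nonzero difference in more than $g$ ways. This must be checked to be a genuine contradiction: distinct tuples $(i_1,\ldots,i_d)$ yielding the same value $(v_i-v_j)$ and the same target difference give genuinely distinct unordered pairs $\{(x^1_{i_1},\ldots,x^d_{i_d})\cdot v_i,\ (x^1_{i_1},\ldots,x^d_{i_d})\cdot v_j\}$ in $A_\ell$ because the map to $W^\circ$ is injective (Lemma~\ref{lem:uniqueExpressions2}) and the two coordinates $i_i, i_j$ are recoverable from the difference value. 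This contradicts $A_\ell$ being $B^\circ_2[g]$.

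\textbf{Expected obstacle.} The only subtlety, and the place I would be most careful, is the bookkeeping that keeps the number of \emph{target difference values} polynomially bounded while the number of \emph{witnessing tuples} is superpolynomial of strictly higher degree: one must confirm that $v_i - v_j$ really has support size exactly $2$ for this choice of the $v_j$'s (so the difference is both nonzero — using $i \neq j$ — and depends on only two of the $d$ index coordinates), and that distinct $S$-tuples producing the same difference truly produce distinct representations rather than accidental coincidences. Both follow from uniqueness of base-$5$ expansions with coefficients in $[-2,2]$ and from Lemma~\ref{lem:uniqueExpressions2}, exactly parallel to the sum case. Once the support-size-$2$ observation is in hand, the inequality $\lceil d/2 \rceil > 2$ for $k \geq 5$ closes the argument precisely as in Lemma~\ref{lem:decomposition}.
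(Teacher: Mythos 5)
Your proposal matches the paper's proof essentially step for step: the same pigeonhole-on-tuples argument, the same observation that $v_i-v_j$ has support of size exactly $2$, the same tuple count of order $n^{\lceil d/2\rceil}$ versus value count $\binom{k}{2}n^2$, and the same appeal to Lemma~\ref{lem:uniqueExpressions2} for distinctness of the representations. (The only quibble is a harmless sign swap: with the paper's $v_j$, the difference $v_i-v_j$ is $-2$ in coordinate $i$ and $+2$ in coordinate $j$, not the reverse; this does not affect the argument.)
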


\begin{proof} We suppose that this is not true, i.e. there exist sets $A^\circ_1, \ldots, A^\circ_{k-1}$ such that $W^\circ = A^\circ_1 \cup A^\circ_2 \cup \ldots \cup A^\circ_{k-1}$, where each $A^\circ_i$ is a $B^\circ_2[g]$ set for some fixed $g$. We consider each $d$-tuple $(i_1, \ldots, i_d)$ in $S$. This corresponds to $k$ elements of $W^\circ$, namely $(x^1_{i_1}, \ldots, x^d_{i_d}) \cdot v_1, \ldots, (x^1_{i_1}, \ldots, x^d_{i_d}) \cdot v_k$. By the pigeonhole principle, some pair of these must belong to the same set $A^\circ_{\ell}$. This means we have a distinct way of achieving a difference of the form $(x^1_{i_1}, \ldots, x^d_{i_d}) \cdot v_i - (x^1_{i_1}, \ldots, x^d_{i_d}) \cdot v_j$ in $A^\circ_\ell - A^\circ_{\ell}$ (this is a distinct way of achieving this difference because elements of $W^\circ$ have unique representations as $(x^1_{i_1}, \ldots, x^d_{i_d}) \cdot v_i$ by Lemma~\ref{lem:uniqueExpressions2}). We note that:
\[(x^1_{i_1}, \ldots, x^d_{i_d}) \cdot v_i - (x^1_{i_1}, \ldots, x^d_{i_d}) \cdot v_j = (x^1_{i_1}, \ldots, x^d_{i_d}) \cdot (v_i-v_j),\]
and that $v_i - v_j$ is 0 in all but 2 of the coordinates.

We consider tuples $(i_1, \ldots, i_d) \in S$ such that each of $i_1, \ldots, i_d \leq n$, for some fixed positive integer $n$. From the proof of Lemma~\ref{lem:decomposition}, we know there are at least $\left( \frac{n}{2d \lceil \frac{d}{2} \rceil }\right) ^{\lceil \frac{d}{2} \rceil}$ of these tuples.

As discussed above, each of these $d$-tuples in $S$ contributes a unique way of forming a difference $(x^1_{i_1}, \ldots, x^d_{i_d}) \cdot (v_i-v_j)$ in $A^\circ_{\ell} - A^\circ_\ell$ for some $A^\circ_\ell$. When all of $i_1, \ldots, i_d$ are $\leq n$, there are at most ${k \choose 2} n^{2}$ possibilities for the value of $(x^1_{i_1}, \ldots, x^d_{i_d}) \cdot (v_i-v_j)$. We can see this by noting that there are ${k \choose 2}$ possibilities for $v_i - v_j$, and each of them only has 2 nonzero coordinates. In each such coordinate, we know our index value is at most $n$.

We note that $d$ is a fixed function of $k$, and we consider letting $n$ grow to infinity. Since $\left( \frac{n}{2d \lceil \frac{d}{2} \rceil }\right) ^{\lceil \frac{d}{2} \rceil}$ grows faster as a function of $n$ than ${k \choose 2} n^{2}$, and there are only $k$ possibilities for $A^\circ_\ell$, we must have that for any fixed $g$, there is some $A^\circ_\ell$ such that some element of $A^\circ_\ell + A^\circ_\ell$ can be expressed in $> g$ ways as a difference of two elements of $A^\circ_\ell$. This contradicts that $A^\circ_\ell$ is a $B^\circ_2[g]$ set. Hence we have proven that $W^\circ$ is not a union of $k-1$ $B^\circ_2[g]$ sets for any finite $g$.
\end{proof}

By employing the same counting argument as above with a fixed $n$ (sufficiently large with respect to $k$ and $g$), we can state our result in the context of finite sets:
\begin{theorem}
\label{thm:main2}
For any positive integers $g$ and $k\geq 5$, there exists a finite $B_2[2]$ set $W^\circ_{n,k} \subseteq \Z$ such that $W^\circ_{n,k}$ is a union of $k$ $B^\circ_2[1]$ sets but cannot be decomposed as a union of $k-1$ $B^\circ_2[g]$ sets.
\end{theorem}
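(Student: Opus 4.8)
The plan is to take $W^\circ_{n,k}$ to be the restriction of $W^\circ$ to those tuples $(i_1,\ldots,i_d)\in S$ all of whose coordinates are $\leq n$, i.e.
\[
W^\circ_{n,k} := \bigcup_{j=1}^k \big\{ (x^1_{i_1},\ldots,x^d_{i_d})\cdot v_j : (i_1,\ldots,i_d)\in S,\ i_1,\ldots,i_d \leq n \big\},
\]
and then to read off the three required properties. Two of them are immediate inheritance statements: $W^\circ_{n,k}$ is a subset of $W^\circ$, and every subset of a $B_2[g]$ set is again a $B_2[g]$ set, so $W^\circ_{n,k}$ is a $B_2[2]$ set by the lemma above showing $W^\circ$ is; likewise $W^\circ_{n,k}$ is the union of the $k$ sets $W^\circ_j \cap W^\circ_{n,k}$, each a subset of the $B^\circ_2[1]$ set $W^\circ_j$, hence a union of $k$ $B^\circ_2[1]$ sets.

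The one point requiring work is the indecomposability, and here I would re-run the counting argument from the proof of the preceding (infinite) lemma, but make it quantitative in $n$. Suppose for contradiction that $W^\circ_{n,k} = A^\circ_1 \cup \cdots \cup A^\circ_{k-1}$ with each $A^\circ_\ell$ a $B^\circ_2[g]$ set. Each tuple $(i_1,\ldots,i_d)\in S$ with all coordinates $\leq n$ yields $k$ distinct elements $(x^1_{i_1},\ldots,x^d_{i_d})\cdot v_1,\ldots,(x^1_{i_1},\ldots,x^d_{i_d})\cdot v_k$ of $W^\circ_{n,k}$, so by pigeonhole two of them, say those for $v_i$ and $v_j$, lie in a common $A^\circ_\ell$, giving a representation of the difference $(x^1_{i_1},\ldots,x^d_{i_d})\cdot(v_i - v_j)$ as a difference of two elements of $A^\circ_\ell$; this representation is genuinely determined by the tuple because elements of $W^\circ$ have unique expansions (Lemma~\ref{lem:uniqueExpressions2}). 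By the count in the proof of Lemma~\ref{lem:decomposition}, there are at least $\big(\tfrac{n}{2d\lceil \frac{d}{2}\rceil}\big)^{\lceil \frac{d}{2}\rceil}$ such tuples, while the number of pairs (choice of $\ell$, value of the difference) available is at most $(k-1)\cdot{k \choose 2}\cdot n^2$, since $v_i - v_j$ has only $2$ nonzero coordinates and each relevant index value is $\leq n$.

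Since $d = k$ and $k \geq 5$, we have $\lceil \frac{d}{2}\rceil \geq 3$, so $\big(\tfrac{n}{2d\lceil \frac{d}{2}\rceil}\big)^{\lceil \frac{d}{2}\rceil}$ grows strictly faster in $n$ than $(k-1){k \choose 2}n^2$. Hence there is an explicit threshold $n_0 = n_0(g,k)$ such that for all $n \geq n_0$ the number of tuples exceeds $g$ times the number of available pairs; fixing any such $n$, the pigeonhole principle forces some $A^\circ_\ell$ to represent some nonzero difference in more than $g$ ways, contradicting that it is a $B^\circ_2[g]$ set. I expect the only "obstacle" to be bookkeeping: one must check that the exponent $\lceil d/2\rceil$ strictly beats $2$ for every $k \geq 5$ (which is precisely where the hypothesis $k\geq 5$ enters, exactly as in the lemmas on $W^\circ$) and then record the choice of $n$; there should be no genuine difficulty beyond this finitization of an already-proven statement.
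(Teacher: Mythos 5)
Your proof is correct and takes essentially the same approach as the paper's: you restrict $W^\circ$ to tuples with coordinates at most $n$, observe that the $B_2[2]$ property and the decomposition into $k$ $B_2^\circ[1]$ sets are inherited by subsets, and then rerun the paper's pigeonhole counting argument with a fixed but sufficiently large $n$, which is precisely what the paper means by ``employing the same counting argument as above with a fixed $n$.'' Your observation that $\lceil d/2 \rceil \geq 3 > 2$ for $k = d \geq 5$ is the correct quantitative point making the growth-rate comparison go through (though note that in the preceding lemmas the hypothesis $k \geq 5$ is also separately needed so that $v_i + v_j$ is nonzero in more than half the coordinates, which is what gives the $B_2[2]$ property).
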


Here, $W^\circ_{n,k}$ is the finite subset of $W^\circ$ formed by restricting to tuples $(i_1, \ldots, i_d) \in S$ such that $i_1, \ldots, i_d \leq n$.

We now fix $k$ and $g$ and consider the set $W^\circ_{n,k} \times W_{n,k} \subseteq \Z^2$, where $n$ is chosen to be sufficiently large with respect to $k$ and $g$, and $W_{n,k}$ is defined as in Theorem~\ref{thm:main}.

\begin{theorem}
\label{lem:tensorDecomposition}
For each fixed $g$ and $k \geq 5$, there exists a sufficiently large $n$ such that $W^\circ_{n,k} \times W_{n,k} \subseteq \Z^2$ cannot be expressed as a union of $\leq \frac{k}{3}-1$ $B^\circ_2[g]$ and $B_2[g]$ sets.
\end{theorem}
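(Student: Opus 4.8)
The plan is to combine the two one-dimensional counting arguments (Lemma~\ref{lem:decomposition} and its analogue for $W^\circ$) by projecting onto the appropriate coordinate of $\Z^2$. Suppose for contradiction that $W^\circ_{n,k} \times W_{n,k} = B_1 \cup \cdots \cup B_m$ with $m \leq \frac{k}{3} - 1$, where each $B_t$ is either a $B_2[g]$ set or a $B^\circ_2[g]$ set in $\Z^2$. First I would observe that for a fixed $d$-tuple $(i_1, \ldots, i_d) \in S$ (the Vandermonde set defining $W_{n,k}$) and a fixed $d'$-tuple $(i'_1, \ldots, i'_{d'}) \in S'$ (the set defining $W^\circ_{n,k}$), we obtain $k \times k$ points of $W^\circ_{n,k} \times W_{n,k}$, namely $\left( (x^1_{i'_1}, \ldots) \cdot v^\circ_a,\ (x^1_{i_1}, \ldots) \cdot v_b \right)$ for $1 \leq a, b \leq k$. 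Since $m \leq \frac{k}{3} - 1$, a pigeonhole argument on these $k^2$ points (or, more economically, just on the $k$ points with $a = b$) forces a repeated membership in some $B_t$; the factor of $3$ is the slack we need to guarantee the repeated pair is ``useful'' for whichever type ($B_2[g]$ or $B^\circ_2[g]$) the set $B_t$ happens to be.

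The key step is to handle both types of $B_t$ at once. If $B_t$ is a $B_2[g]$ set: two points $\left(\phi(M^\circ \cdot y^\circ)\cdot v^\circ_a, \phi(M \cdot y)\cdot v_b\right)$ and $\left(\phi(M^\circ \cdot y^\circ)\cdot v^\circ_{a'}, \phi(M \cdot y)\cdot v_{b'}\right)$ lying in $B_t$ give a representation of a point of $B_t + B_t$ whose \emph{second} coordinate is $(x^1_{i_1}, \ldots) \cdot (v_b + v_{b'})$ — and since the $W$-construction uses vectors with $v_b + v_{b'}$ having $> d/2$ zero coordinates, this second coordinate depends on fewer than $\lceil d/2 \rceil$ of the $i_j$'s, hence takes few values (as in Lemma~\ref{lem:decomposition}), while $y$ ranges over polynomially many tuples. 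If $B_t$ is a $B^\circ_2[g]$ set: the analogous pair gives a representation of a point of $B_t - B_t$ whose \emph{first} coordinate is $(x^1_{i'_1}, \ldots) \cdot (v^\circ_a - v^\circ_{a'})$, and since the $W^\circ$-construction uses $v^\circ_a - v^\circ_{a'}$ supported on exactly $2$ coordinates, this first coordinate takes only $O(n^2)$ values as in the $W^\circ$ argument. In either case, one coordinate of the relevant sum/difference is ``pinned down'' to few possibilities while the preimage tuples run over $\gtrsim n^{\lceil d/2\rceil}$ or $\gtrsim n^{\lceil d'/2 \rceil}$ possibilities.

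With this dichotomy in hand, I would run the counting argument: fix attention on tuples with all indices $\leq n$. There are at least $\left(\frac{n}{2d\lceil d/2\rceil}\right)^{\lceil d/2\rceil}\cdot\left(\frac{n}{2d'\lceil d'/2\rceil}\right)^{\lceil d'/2\rceil}$ pairs of tuples $\big((i_1,\ldots,i_d),(i'_1,\ldots,i'_{d'})\big) \in S \times S'$, each contributing (after pigeonholing over the $k$ diagonal points into the $\leq \frac{k}{3}-1$ sets $B_t$) a distinct representation, via Lemmas~\ref{lem:uniqueExpressions} and \ref{lem:uniqueExpressions2}, of a sum in some $B_t + B_t$ (if $B_t$ is $B_2[g]$) or a difference in some $B_t - B_t$ (if $B_t$ is $B^\circ_2[g]$). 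For the $B_2[g]$ sets, the number of distinct values of the resulting point is at most $\binom{k}{2}\, n^{\lceil d/2\rceil - 1} \cdot n^{d'}$ (the second coordinate pinned as above, the first coordinate free); for the $B^\circ_2[g]$ sets, at most $\binom{k}{2}\, n^2 \cdot n^{\lceil d/2 \rceil}$ with roles reversed. Choosing $d, d'$ as in the earlier constructions, in both cases the number of available ``slots'' grows strictly slower in $n$ than the number of tuples, so taking $n$ large enough forces some $B_t$ to contain more than $g$ representations of a common sum (resp. difference), contradicting that it is a $B_2[g]$ (resp. $B^\circ_2[g]$) set.

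The main obstacle I expect is \textbf{bookkeeping the pigeonhole slack correctly}: one must verify that $m \leq \frac{k}{3} - 1$ really does guarantee, for each $(i,i')$-tuple-pair, a repeated pair in a single $B_t$ that is exploitable \emph{no matter which of the two types $B_t$ is}. The cleanest route is probably to note that among the $k$ diagonal points one gets $\geq \lceil k / m \rceil \geq 3$ points in some single $B_t$ (since $m \le k/3 - 1 < k/3$), hence $\binom{3}{2} = 3$ pairs with distinct index-pairs $\{a,a'\}$; then whichever type $B_t$ is, at least one of these pairs has $v_a + v_{a'} \ne 0$ in the needed coordinate pattern (for $B_2[g]$) or $v^\circ_a - v^\circ_{a'} \ne 0$ (for $B^\circ_2[g]$, which is automatic for $a \ne a'$). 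A secondary subtlety is confirming the ``distinctness of representations'' — that different tuple-pairs $(i,i')$ genuinely give different representations and not merely different values — but this is exactly what Lemmas~\ref{lem:uniqueExpressions}, \ref{lem:uniqueExpressions2}, and \ref{lem:general} were set up to deliver, applied coordinatewise in $\Z^2$.
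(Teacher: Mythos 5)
Your approach is genuinely different from the paper's (a per-tuple-pair pigeonhole on the diagonal points, rather than the paper's density dichotomy followed by a fiber-wise count), and it can be made to work, but as written the counting in the $B_2[g]$ branch has a gap. You bound the number of $B_2[g]$-sum slots by $\binom{k}{2}\,n^{\lceil d/2\rceil-1}\cdot n^{d'}$, treating the $W^\circ$-coordinate of the sum as ``free,'' and then assert this grows strictly slower in $n$ than the $\approx n^{\lceil d/2\rceil+\lceil d'/2\rceil}$ tuple-pairs. But $\lceil d'/2\rceil \leq d'-1$ whenever $d'\geq 2$, so $n^{\lceil d/2\rceil-1+d'} \geq n^{\lceil d/2\rceil+\lceil d'/2\rceil}$; the inequality you need fails and no contradiction is reached. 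The repair is to note that the $W^\circ$-coordinate of a diagonal sum is not free: both members of the pair share the same $W^\circ$-tuple $(i')^t$, so this coordinate equals $\phi'\bigl(M'(i')^t\bigr)\cdot\bigl(v^\circ_a+v^\circ_{a'}\bigr)$, a function of $(i')^t$ and $\{a,a'\}$ alone, and therefore ranges over at most $\binom{k}{2}\,|S'_n| \approx \binom{k}{2}\,n^{\lceil d'/2\rceil}$ values. (You applied exactly this tight reasoning to the $W$-coordinate in your $B^\circ_2[g]$ branch, writing $n^{\lceil d/2\rceil}$, but reverted to the loose bound in the $B_2[g]$ branch.) With the correction the slot count becomes $\lesssim n^{\lceil d/2\rceil-1+\lceil d'/2\rceil}$, one power of $n$ below the tuple-pair count, and the argument closes.

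Once repaired, your route is arguably cleaner and actually yields a stronger constant: all $k$ diagonal points lie in $W^\circ_{n,k}\times W_{n,k}$, so pigeonholing requires only $m\leq k-1$ (any collision produces a pair with $a\neq a'$, and such a pair is exploitable for either type of $B_t$, so the ``$3$-point'' slack you worried about is unnecessary), giving non-decomposability into $k-1$ mixed sets rather than $\frac{k}{3}-1$. The paper instead first notes that at least half the product set lies in the $B_2$-type pieces (or else the $B^\circ_2$-type pieces), averages to find a fixed $a\in W^\circ_{n,k}$ whose fiber $a\times W_{n,k}$ is at least half covered by those pieces, and then averages again to find a $\frac{1}{4}$-fraction of tuples $j$ with $\geq k/3$ elements of $a\times I_j$ in them, so that pigeonholing into $<k/3$ sets collides. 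Fixing the $W^\circ$-coordinate entirely is why the paper never needs the Reed--Solomon structure of $W^\circ$ inside this theorem, at the price of the weaker union bound $\frac{k}{3}-1$.
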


\begin{proof} We let $k' := \frac{k}{3} -1$. We suppose that
\[W^\circ_{n,k} \times W_{n,k} = \left(\bigcup_{i=1}^{j} A_i\right) \bigcup \left(\bigcup_{i=j+1}^{k'} A_i^\circ\right),\] where each $A_i$ is a $B_2[g]$ set and each $A^\circ_i$ is a $B_2^\circ[g]$ set. We note that at least half of the elements of $W^\circ_{n,k} \times W_{n,k}$ must be contained in either the union of the $A_i$'s or the union of the $A^\circ_i$'s. We suppose that $\geq \frac{1}{2}$ the elements are contained in the $A_i$'s. This implies that there must exist some $a \in W^\circ_{n,k}$ such that at least half of the elements $a \times b$ for $b \in W_{n,k}$ are contained in the union of the $A_i$'s.

We let $S_n$ denote the set of $d$-tuples $(i_1, \ldots, i_d) \in S$ such that $i_1, \ldots, i_d \leq n$. We define $N:= |S_n|$, and we number these tuples from 1 to $N$. For each $j$ from 1 to $N$, we let $I_j$ denote the set of $k$ elements of $W_{n,k}$ corresponding to the tuple $j$. We suppose that for $(1-\alpha) N$ of these sets $I_j$, we have less than $\frac{k}{3}$ elements of $a \times I_j$ in the union of the $A_i$'s. Then $\alpha$ must satisfy:
\[(1-\alpha)N\left(\frac{k}{3}\right) + \alpha N k \geq \frac{1}{2} Nk
\Leftrightarrow (1-\alpha)\left(\frac{1}{3}\right) + \alpha \geq \frac{1}{2}
\Leftrightarrow \frac{1}{3} - \frac{\alpha}{3} + \alpha \geq \frac{1}{2}
\Leftrightarrow \alpha \geq \frac{1}{4}.\]

This means that for at least $\frac{1}{4}N$ values of $j$, we have at least $\frac{k}{3}$ elements of $a \times I_j$ in the union of the $A_i$'s. Now, there are at most $k' < \frac{k}{3}$ of the $A_i$'s, so for these tuples $j$, we must have that two distinct elements of $a \times I_j$ will be in the same $A_i$. Each of these will correspond to a distinct representation of one of ${k \choose 2} n^{\lceil \frac{d}{2} \rceil -1}$ possible sum values in $\Z^2$ (note that all of these will be equal to $2a$ in the first coordinate). Since this will occur at least
\[\frac{1}{4} N  \geq \frac{1}{4} \left( \frac{n}{2d \lceil \frac{d}{2} \rceil }\right) ^{\lceil \frac{d}{2} \rceil}\]
times, and there are only $k'$ $A_i$'s, we can choose $n$ large enough to contradict that each $A_i$ is a $B_2[g]$ set (note that $k, d, g$ are all fixed).

Similarly, if at least half of the elements of $W^\circ_{n,k} \times W_{n,k}$ are contained in the union of the $A^\circ_i$'s, then there must be some fixed $b \in W_{n,k}$ such that at least half of the elements of $W^\circ_{n,k} \times b$ are contained in the $A^\circ_i$'s. Then for at least $\frac{1}{4}$ of the $N$ $d$-tuples in $S_n$, we will get a distinct representation of one of ${k \choose 2} n^2$ values as a difference of two elements of some $A^\circ_i$. We can then choose $n$ large enough to contradict that each $A^\circ_i$ is a $B^\circ_2[g]$ set.
\end{proof}

\section{A Counterexample to the Weak Anti-Freiman Conjecture}

We now use our sets $W^\circ_{n,k} \times W_{n,k}$ to disprove the weak anti-Freiman conjecture (Conjecture~\ref{con:anti-Freiman}).
We first prove a lemma about $\Lambda(4)$ sets. This is essentially Lemma 4.30 from \cite{TV1}.

\begin{lemma}\label{LambdatoSum}
\label{lem:subsetConditionHelper}
Let $S \subset \Z^{d}$ such that $K_{4}(S)<\infty$. (Recall the definition of $K_4(S)$ from equation (\ref{LambdaDef}) in subsection \ref{sec:definitions}.) Furthermore if $(h_{1},h_{2})\in \{(2,0),(1,1)\}$, then for any finite $S' \subseteq S$,
\[|h_{1}S'-h_{2}S'| \geq \frac{|S'|^2}{\left(K_{4}(S)\right)^{4} }.\]
\end{lemma}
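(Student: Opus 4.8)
The plan is to test the hypothesis $K_4(S)<\infty$ against one well-chosen trigonometric polynomial, the exponential sum
\[ f(x) := \sum_{\xi \in S'} e(\xi \cdot x), \]
whose Fourier coefficients form the indicator function $\mathbf 1_{S'}$ and are therefore supported in $S$. The defining inequality (\ref{LambdaDef}) then gives $\vectornorm{f}_{L^4} \leq K_4(S) \vectornorm{f}_{L^2}$, and since $\vectornorm{f}_{L^2}^2 = |S'|$ by Parseval, raising to the fourth power yields $\vectornorm{f}_{L^4}^4 \leq (K_4(S))^4 |S'|^2$. Everything then reduces to bounding $\vectornorm{f}_{L^4}^4$ from below by $|h_1 S' - h_2 S'|$.

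For the sumset case $(h_1,h_2)=(2,0)$, I would expand $f^2(x) = \sum_\xi r(\xi) e(\xi\cdot x)$, where $r(\xi) = \#\{(a,b) \in S'\times S' : a+b=\xi\}$, so that $\vectornorm{f}_{L^4}^4 = \vectornorm{f^2}_{L^2}^2 = \sum_\xi r(\xi)^2$ by Parseval again. The representation function $r$ is supported exactly on $2S' = S'+S'$ and satisfies $\sum_\xi r(\xi) = |S'|^2$, so Cauchy--Schwarz gives
\[ |S'|^4 = \Big(\sum_\xi r(\xi)\Big)^2 \leq |2S'|\sum_\xi r(\xi)^2 = |2S'|\cdot \vectornorm{f}_{L^4}^4 \leq |2S'|\,(K_4(S))^4 |S'|^2, \]
whence $|2S'| \geq |S'|^2/(K_4(S))^4$. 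For the difference case $(h_1,h_2)=(1,1)$, the identical argument applies with $f^2$ replaced by $|f|^2(x) = f(x)\overline{f(x)} = \sum_\xi s(\xi) e(\xi\cdot x)$, where $s(\xi) = \#\{(a,b)\in S'\times S' : a-b=\xi\}$; here $\vectornorm{f}_{L^4}^4 = \vectornorm{|f|^2}_{L^2}^2 = \sum_\xi s(\xi)^2$, the function $s$ is supported exactly on $S'-S'$, and $\sum_\xi s(\xi) = |S'|^2$, so the same Cauchy--Schwarz step yields $|S'-S'| \geq |S'|^2/(K_4(S))^4$.

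There is no real obstacle here: this is the standard ``$L^4$ norm controls additive energy'' computation (essentially Lemma 4.30 of \cite{TV1}). The only points needing a little care are (i) choosing the correct quadratic expression in each case --- $f^2$ for sums and $|f|^2$ for differences --- and noting that $|f^2| = |f|^2$, so that integrating the square of either recovers $\vectornorm{f}_{L^4}^4$; and (ii) observing that the support of the relevant representation function is \emph{precisely} the set $2S'$ (respectively $S'-S'$) whose cardinality we wish to lower bound, which is what makes the Cauchy--Schwarz inequality tight enough to be useful. With these observations the statement follows at once.
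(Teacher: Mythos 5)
Your proof is correct and follows essentially the same route as the paper: test the $\Lambda(4)$ inequality on the exponential sum over $S'$, express $\vectornorm{f}_{L^4}^4$ as the $\ell^2$ norm of the relevant representation function via Parseval, and apply Cauchy--Schwarz against the indicator of the (sum or difference) set. The paper merely writes the two cases $(h_1,h_2)\in\{(2,0),(1,1)\}$ in a single unified notation $R_{h_1,h_2}$, whereas you treat $f^2$ and $|f|^2$ separately; the content is the same.
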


\begin{proof} First, from the definition of $K_{4}(S)$ we have
\begin{equation}\label{Size1}\left|\left|\sum_{\xi \in S'} e(\xi \cdot x) \right|\right|_{L^4}^{4} \leq \left(K_{4}(S) \right)^4 |S'|^2. \end{equation}
Now we also have that
\[\vectornorm{\sum_{\xi \in S'} e(\xi \cdot x)}_{L^4}^{4} = \vectornorm{\left(\sum_{\xi \in S'}e(\xi \cdot x) \right)^{h_{1}}  \left(\sum_{\xi \in S'}e(-\xi \cdot x) \right)^{h_{2}} }_{L^2}^{2}.\]
We let \[R_{2,0}(\nu) = |\left((\xi_{1},\xi_{2}) \in S' \times S') : \xi_{1}+\xi_{2}= \nu \right)|\] and we also let \[R_{1,1}(\nu) = |\left((\xi_{1},\xi_{2}) \in S' \times S') : \xi_{1}-\xi_{2}= \nu \right)|.\] We then have:
\[ \sum_{\nu \in \Z^{d}} R_{h_{1},h_{2}}^2(\nu) \leq \left(K_{4}(S)\right)^{4} |S'|^2.\]
We also note that:
\begin{equation}\label{size2}\sum_{\nu \in \Z^{d}} R_{h_{1},h_{2}}(\nu) = |S'|^{2}.
\end{equation}

Finally, by Cauchy-Schwarz, (\ref{Size1}), and the fact that $R_{h_{1},h_{2}}(\nu)$ is supported on the set $h_{1}S' - h_{2}S'$, we have
\[\sum_{\nu \in \Z^{d}} 1_{\{h_{1}S' - h_{2}S'\}}(\nu)  R_{h_{1},h_{2}}(\nu) \leq ||1_{\{h_{1}S' - h_{2}S'\}}||_{L^2(\Z^d)} \vectornorm{ R_{h_{1},h_{2}} }_{L^2(\Z^d)}\]
\begin{equation}\label{size3} \leq |h_{1}S' - h_{2}S'|^{1/2}\left(K_{4}(S)\right)^{2} |S'|.
\end{equation}
From (\ref{size2}) and (\ref{size3}), we have that
\[|S'|^2 \leq |h_{1}S'- h_{2}S'|^{1/2} \left(K_{4}(S)\right)^{2} |S'|, \]
which completes the proof.

\end{proof}

\begin{lemma} There is a universal constant $\delta > 0$ such that for any $k \geq 5$, for any finite subset $W'$ of $W^\circ \times W$ (recall that $W^\circ \times W$ is defined with respect to $k$), $|W'+W'| \geq \delta |W'|^2$ and $|W'-W'| \geq \delta |W'|^2$.
\end{lemma}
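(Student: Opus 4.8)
The plan is to reduce the statement to a single bound on the $\Lambda(4)$ constant of the product set that is uniform in $k$, and then quote Lemma~\ref{lem:subsetConditionHelper}. Concretely, I would establish that $K_4(W^\circ \times W) \le 2^{1/2} 3^{1/4}$, where the right-hand side does not depend on $k$. Granting this, Lemma~\ref{lem:subsetConditionHelper} applied to $S = W^\circ \times W \subset \Z^2$ with $(h_1,h_2) = (2,0)$ gives $|W'+W'| \ge |W'|^2/(2^{1/2}3^{1/4})^4 = |W'|^2/12$ for every finite $W' \subseteq W^\circ \times W$, and the same lemma with $(h_1,h_2) = (1,1)$ gives $|W'-W'| \ge |W'|^2/12$. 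Hence $\delta = 1/12$ works for all $k \ge 5$; the precise constant is immaterial, only its independence of $k$ matters.

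To get a $k$-independent bound on $K_4(W^\circ \times W)$ one must resist estimating $K_4(W)$ (or $K_4(W^\circ)$) by the triangle inequality (\ref{LambdaUnion}) over the $k$ pieces $W_j$, which would only yield the useless bound $K_4(W) \le k$. Instead I would use the two global boundedness properties already proved, each uniform in $k$: $W^\circ$ is a $B_2[2]$ set and $W$ is a $B_2^\circ[2]$ set. For $W^\circ$: expanding $\|f\|_{L^4}^4 = \|f^2\|_{L^2}^2 = \sum_\xi \bigl| \sum_{\xi_1+\xi_2 = \xi} \hat f(\xi_1)\hat f(\xi_2) \bigr|^2$ and using that, since $W^\circ$ is $B_2[2]$, every $\xi$ has at most $4$ ordered representations $\xi_1+\xi_2$ with $\xi_1,\xi_2 \in W^\circ$, Cauchy--Schwarz bounds the sum by $4\|f\|_{L^2}^4$, so $K_4(W^\circ) \le 2^{1/2}$. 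For $W$: expanding instead through differences, $\|f\|_{L^4}^4 = \bigl\| \, |f|^2 \, \bigr\|_{L^2}^2 = \sum_\nu \bigl| \sum_{\xi_1-\xi_2 = \nu} \hat f(\xi_1)\overline{\hat f(\xi_2)} \bigr|^2$, the term $\nu = 0$ equals $\|f\|_{L^2}^4$ while, since $W$ is $B_2^\circ[2]$, each $\nu \ne 0$ has at most $2$ representations $\xi_1-\xi_2$ with $\xi_1,\xi_2 \in W$, so Cauchy--Schwarz gives $\|f\|_{L^4}^4 \le \|f\|_{L^2}^4 + 2\|f\|_{L^2}^4 = 3\|f\|_{L^2}^4$, i.e.\ $K_4(W) \le 3^{1/4}$. (This is precisely the remark, made in the introduction, that $B_2^\circ[g]$ sets are $\Lambda(4)$ with constant depending only on $g$.)

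To finish I would combine these via the tensorization inequality $K_4(A \times B) \le K_4(A)\,K_4(B)$, valid for any $A,B \subseteq \Z$. Given $F$ on $\T^2$ with $\hat F$ supported on $A \times B$: for each fixed $y$ the function $x \mapsto F(x,y)$ has Fourier support in $A$, so $\|F\|_{L^4(\T^2)}^4 = \int_\T \|F(\cdot,y)\|_{L^4_x}^4 \, dy \le K_4(A)^4 \int_\T \bigl( \|F(\cdot,y)\|_{L^2_x}^2 \bigr)^2 dy$; setting $h_\xi(y) := \sum_{\eta \in B} \hat F(\xi,\eta) e(\eta y)$, which has Fourier support in $B$, one has $\|F(\cdot,y)\|_{L^2_x}^2 = \sum_{\xi \in A} |h_\xi(y)|^2$, and Minkowski's inequality in $L^2(\T)$ followed by the $\Lambda(4)$ inequality for $B$ applied to each $h_\xi$ yields $\bigl\| \sum_\xi |h_\xi|^2 \bigr\|_{L^2(\T)} \le \sum_\xi \|h_\xi\|_{L^4(\T)}^2 \le K_4(B)^2 \sum_\xi \|h_\xi\|_{L^2(\T)}^2 = K_4(B)^2 \|F\|_{L^2(\T^2)}^2$; squaring and substituting back gives $K_4(A \times B) \le K_4(A) K_4(B)$. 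Taking $A = W^\circ$, $B = W$ produces the required bound $K_4(W^\circ \times W) \le 2^{1/2} 3^{1/4}$.

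The main obstacle is conceptual rather than computational: one must notice that although $W$ and $W^\circ$ are only presented as unions of $k$ Sidon-type pieces, each carries a single $k$-uniform ``dual'' property ($W$ is $B_2^\circ[2]$, $W^\circ$ is $B_2[2]$) which yields a $\Lambda(4)$ constant independent of $k$, and that a sum-bounded set and a difference-bounded set can be married through the direct product because tensorization multiplies the $\Lambda(4)$ constants regardless of which mechanism produced them. (A direct representation count on $W^\circ \times W$ cannot work, since $W^\circ$ has unbounded pairwise differences and $W$ has unbounded pairwise sums.) Everything else is the routine Cauchy--Schwarz and Minkowski bookkeeping above, together with the appeal to Lemma~\ref{lem:subsetConditionHelper}.
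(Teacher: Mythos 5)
Your proof is correct and follows exactly the route the paper takes: observe that $W^\circ$ (being $B_2[2]$) and $W$ (being $B_2^\circ[2]$) each have $\Lambda(4)$ constants bounded independently of $k$, pass to the product via the tensorization lemma, and then invoke Lemma~\ref{lem:subsetConditionHelper}. The only difference is that the paper simply cites Lemma~\ref{LambdaProd} and its $B_2^\circ[g]\Rightarrow\Lambda(4)$ lemma rather than re-deriving them with explicit constants as you do.
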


\begin{proof} We fix a value of $k \geq 5$. We note that $W^\circ$ is a $B_2[2]$ set, and hence it is a $\Lambda(4)$ set, with its $\Lambda(4)$ constant bounded independently of $k$. Similarly, $W$ is $B^\circ_2[2]$, so it is also a $\Lambda(4)$ set, with its $\Lambda(4)$ constant bounded independently of $k$. Thus, by Lemma~\ref{LambdaProd}, we conclude that $W^\circ \times W$ is also a $\Lambda(4)$ set, with its $\Lambda(4)$ constant bounded independently of $k$. By the lemma above, there exists $\delta > 0$ independent of $k$ such that $|W'+W'| \geq \delta |W'|^2$ and $|W'-W'| \geq \delta |W'|^2$ for any finite subset $W'$ of $W^\circ \times W$. (We note that this can be proved directly from the combinatorial properties of our construction, but we prefer this proof because it highlights the connection between the anti-Freiman problem and $\Lambda(4)$ sets.)
\end{proof}

\begin{theorem}\label{nolargeb2} We let $\delta$ be as above, so for every $n$ and $k\geq 5$, we have that $|W'+W'| \geq \delta |W'|^2$ and $|W'-W'| \geq \delta |W'|^2$ for all finite subsets $W'$ of $W^\circ_{n,k} \times W_{n,k}$. For every $g$ and $\delta'$, there exist $k$ and $n$ sufficiently large such that $W^\circ_{n,k} \times W_{n,k}$ does not contain either a $B_2[g]$ set or a $B^\circ_2[g]$ set of size $\geq \delta' |W^\circ_{n,k} \times W_{n,k}|$.
\end{theorem}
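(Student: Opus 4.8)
The plan is to argue by contradiction: assuming $W^\circ_{n,k}\times W_{n,k}$ contains a $B_2[g]$ (or a $B_2^\circ[g]$) subset of relative size at least $\delta'$, I would pass to a heavy one‑dimensional fibre and reduce to the sumset/difference‑counting argument already used in Lemma~\ref{lem:decomposition} and Theorem~\ref{lem:tensorDecomposition}. We may assume $\delta'\le 1$; the parameters are chosen by first fixing $k=\max\{5,\lceil 2/\delta'\rceil\}$ and then taking $n$ sufficiently large in terms of $g$, $k$, and $\delta'$. Since the set in question is genuinely two–dimensional, the point is that a $B_2[g]$ condition on a subset of $W^\circ_{n,k}\times W_{n,k}$ is only about the total sum, so it restricts to a $B_2[g]$ condition on each fibre over a first coordinate, and dually for $B_2^\circ[g]$ over second coordinates.

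For the $B_2[g]$ case, suppose $A'\subseteq W^\circ_{n,k}\times W_{n,k}$ is a $B_2[g]$ set with $|A'|\ge \delta'\,|W^\circ_{n,k}\times W_{n,k}|$. Averaging over the first coordinate yields some $a\in W^\circ_{n,k}$ whose fibre $T_a:=\{\,b\in W_{n,k}:(a,b)\in A'\,\}$ satisfies $|T_a|\ge \delta'\,|W_{n,k}|$, and since $(a,b_1)+(a,b_2)=(2a,\,b_1+b_2)$, the set $T_a$ is itself a $B_2[g]$ subset of $W_{n,k}$. Now I would run the count from the proof of Lemma~\ref{lem:decomposition}: with $d=2^j-1$ as in the main construction, $W_{n,k}$ splits, by Lemma~\ref{lem:uniqueExpressions}, into $N\ge\big(\tfrac{n}{2d\lceil d/2\rceil}\big)^{\lceil d/2\rceil}$ blocks of $k$ elements, one per tuple $(i_1,\dots,i_d)\in S$ with all coordinates $\le n$, and a block with tuple $(i_1,\dots,i_d)$ produces a repeated sum in $T_a$ whenever it meets $T_a$ in two elements, indexed by some $v_i,v_{i'}$, of the form
\[(x^1_{i_1},\dots,x^d_{i_d})\cdot v_i+(x^1_{i_1},\dots,x^d_{i_d})\cdot v_{i'}=(x^1_{i_1},\dots,x^d_{i_d})\cdot(v_i+v_{i'}),\]
which takes one of at most $\binom{k}{2}n^{\lceil d/2\rceil-1}$ values because $v_i+v_{i'}$ has at most $\lceil d/2\rceil-1$ nonzero coordinates by Lemma~\ref{lem:vectorSumDistinctness}. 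Since $|T_a|\ge \delta' kN$ and each block has size $k$, at least $(\delta'-\tfrac1k)N\ge\tfrac{\delta'}{2}N$ of the blocks meet $T_a$ in $\ge2$ elements, and the resulting unordered pairs are pairwise distinct (distinct tuples give distinct elements, again by Lemma~\ref{lem:uniqueExpressions}). Pigeonholing these $\ge\tfrac{\delta'}{2}N$ distinct representations into $\le\binom{k}{2}n^{\lceil d/2\rceil-1}$ sum values, some element of $T_a+T_a$ has at least $\tfrac{\delta'}{2\binom{k}{2}}\,(2d\lceil d/2\rceil)^{-\lceil d/2\rceil}\,n$ representations, which exceeds $g$ once $n$ is large, contradicting that $T_a$ (hence $A'$) is a $B_2[g]$ set.

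The $B_2^\circ[g]$ case is entirely symmetric, using the $W^\circ$ factor and differences. A $\delta'$‑dense $B_2^\circ[g]$ subset $A'\subseteq W^\circ_{n,k}\times W_{n,k}$ has, after averaging over the second coordinate, a fibre $T_b^\circ:=\{\,a\in W^\circ_{n,k}:(a,b)\in A'\,\}$ with $|T_b^\circ|\ge\delta'\,|W^\circ_{n,k}|$, and since $(a_1,b)-(a_2,b)=(a_1-a_2,\,0)$, the set $T_b^\circ$ is a $B_2^\circ[g]$ subset of $W^\circ_{n,k}$. For $W^\circ$ one has $d=k$ and the vectors $v_i$ with a single $-1$, so each $v_i-v_{i'}$ has exactly two nonzero coordinates, and by the count in the decomposition argument for $W^\circ$, a block meeting $T_b^\circ$ in $\ge2$ elements contributes a repeated difference
\[(x^1_{i_1},\dots,x^d_{i_d})\cdot v_i-(x^1_{i_1},\dots,x^d_{i_d})\cdot v_{i'}=(x^1_{i_1},\dots,x^d_{i_d})\cdot(v_i-v_{i'})\]
taking one of at most $\binom{k}{2}n^2$ nonzero values, while at least $\tfrac{\delta'}{2}\big(\tfrac{n}{2k\lceil k/2\rceil}\big)^{\lceil k/2\rceil}$ blocks do so, with pairwise distinct representations by Lemma~\ref{lem:uniqueExpressions2}. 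Since $\lceil k/2\rceil\ge3$ for $k\ge5$, pigeonholing forces some nonzero value to have more than $g$ representations as a difference of elements of $T_b^\circ$ once $n$ is large, contradicting that $T_b^\circ$ is $B_2^\circ[g]$. Taking $k=\max\{5,\lceil 2/\delta'\rceil\}$ and then $n$ above the (explicit, finite) thresholds coming from the two cases gives the theorem.

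I expect the only step requiring real care to be the reduction to a fibre — the observation that a dense $B_2[g]$ (resp. $B_2^\circ[g]$) subset of the product restricts, on a heavy fibre, to a dense $B_2[g]$ (resp. $B_2^\circ[g]$) subset of a single factor $W_{n,k}$ (resp. $W^\circ_{n,k}$), so that the counting of Lemma~\ref{lem:decomposition} (resp. its $W^\circ$ analogue) applies almost verbatim. Everything after that is bookkeeping of the same flavour as in Theorem~\ref{lem:tensorDecomposition}, and no genuinely new idea beyond those lemmas appears to be needed; the one arithmetic subtlety is merely checking that the number of distinct blocks ($\sim n^{\lceil d/2\rceil}$) dominates the number of possible sum values ($\sim n^{\lceil d/2\rceil-1}$), respectively difference values ($\sim n^{2}$ with $\lceil k/2\rceil\ge 3$), as $n\to\infty$ for $k$ fixed.
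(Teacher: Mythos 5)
Your proposal is correct and follows essentially the same route as the paper: average over one coordinate to find a heavy fibre, split that fibre into the $N$ blocks of $k$ elements indexed by tuples in $S$, and pigeonhole the resulting repeated sums (resp. differences) against the $\sim n^{\lceil d/2\rceil-1}$ (resp. $\sim n^2$) possible values, noting that $N \sim n^{\lceil d/2\rceil}$ wins. The only cosmetic difference is in the bookkeeping: the paper parameterizes by a fraction $\gamma$ of blocks containing at least $\delta' k/2$ elements of $A$ (and so requires $k \geq 4/\delta'$), whereas you count blocks meeting the fibre in at least two elements (needing only $k \geq 2/\delta'$); your explicit observation that the heavy fibre is itself a $B_2[g]$ (resp. $B_2^\circ[g]$) subset of the one-dimensional factor is a clean way of phrasing what the paper does implicitly.
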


\begin{proof} We again let $N$ denote the size of $S_n$, so $|W_{n,k}| = |W^\circ_{n,k}| = k N$. We suppose we have $A \subseteq W^\circ_{n,k} \times W_{n,k}$ such that $|A| \geq \delta' |W^\circ_{n,k} \times W_{n,k}| = \delta' k^2 N^2$. We again number the tuples of $S_n$ as 1 to $N$. We let $I_j$ denote the set of $k$ elements of $W_{n,k}$ corresponding to tuple $j$ and we let $I^\circ_j$ denote the set of $k$ elements of $W^\circ_{n,k}$ corresponding to tuple $j$. We note that for some fixed $a \in W^\circ_{n,k}$, $A$ must contain at least $\delta' k N$ elements of $a \times W_{n,k}$. We consider the sets $a \times I_j$. We suppose that $1 - \gamma$ of them have $< \frac{\delta'}{2} k$ elements in $A$. Then $\gamma$ must satisfy:
\[ (1-\gamma)\frac{\delta'}{2} + \gamma \geq \delta' \Leftrightarrow \gamma \geq \frac{\frac{\delta'}{2}}{1 - \frac{\delta'}{2}}.\]

We can then set $\gamma = \frac{\frac{\delta'}{2}}{1 - \frac{\delta'}{2}}$, and we have that at least a $\gamma$-fraction of the $I_j$'s have at least $\frac{\delta'}{2} k$ elements of $a \times I_j$ in $A$. As long as we choose $k$ so that $\delta' \frac{k}{2} \geq 2$, these will lead to repeated sums in $A$. More precisely, each pair of distinct elements in $a \times I_j$ will sum to one of ${k \choose 2} n^{\lceil \frac{d}{2}\rceil -1}$ values, and there are at least ${\delta' \frac{k}{2} \choose 2} \gamma N$ such pairs in $A$. Since $N$ is a faster growing function of $n$ than $n^{\lceil \frac{d}{2}\rceil -1}$, we can choose $n$ sufficiently large to contradict that $A$ is a $B_2[g]$ set.

Similarly, there is some fixed $b \in W_{n,k}$ such that at least $\delta' k N$ elements of $W^\circ_{n,k} \times b$ are contained in $A$. We then have that at least a $\gamma$-fraction of the sets $I_j^\circ \times b$ have at least $\frac{\delta'}{2}k $ elements in $A$. This will lead to repeated differences in $A$: each pair of distinct elements in $I_j^\circ \times b$ will have a difference equal to one of ${k \choose 2} n^2$ values, and there are at least ${ \delta' \frac{k}{2} \choose 2} \gamma N$ such pairs in $A$. Since $N$ is a faster growing function of $n$ than $n^2$, we can choose $n$ sufficiently large to contradict that $A$ is a $B^\circ_2[g]$ set. Thus, if we choose $k$ so that $\delta' \frac{k}{2} \geq 2$ and $n$ sufficiently large with respect to $k$, $g$, $d$, $\delta'$, we have that $A$ cannot be a $B_2[g]$ set or a $B^\circ_2[g]$ set.
\end{proof}

This is a counterexample to Conjecture~\ref{con:anti-Freiman} in $\Z^2$. To obtain a counterexample in $\Z$, we can use $F_2$-isomorphisms, which are discussed in the next section. We note that each $W^\circ_{n,k} \times W_{n,k}$ is a finite set, and thus there is a $F_2$-isomorphic copy of this set inside $\Z$ by Lemma~\ref{F2iso} (which we prove in the next section). If this image in $\Z$ contained a large $B_2[g]$ or $B^\circ_2[g]$ set, then this would correspond to a $B_2[g]$ or $B^\circ_2[g]$ set in $W^\circ_{n,k} \times W_{n,k}$, which we know does not exist. (If two finite sets are $F_2$-isomorphic, then one is a $B_2[g]$ or $B^\circ_2[g]$ set if and only if the other one is as well, by Lemma~\ref{LambdaIso}, which is also proved in the next section.)

\section{$\Lambda(4)$ Sets}
\label{sec:reduction}
We provide an alternate counterexample to Rudin's question for $\Lambda(4)$ sets: we give an explicit set in $\Z$ that is a $\Lambda(4)$ set, but cannot be expressed as finite union of $B_2[g]$ sets for any $g$. However, one might also ask about $B^\circ_2[g]$ sets, since all $B^\circ_2[g]$ sets are $\Lambda(4)$ sets as well:

\begin{lemma}Let $S \subset \Z^{d}$ be a $B_{2}^{\circ}[g]$ set. Then for any function $f \in L^2(\T^d)$ such that $\hat{f}$ is supported on $S$, we have:
\[ \left|\left| \sum_{\xi \in S} \hat{f}(\xi)e(\xi \cdot x) \right|\right|_{L^4} \leq (1+g^2)^{1/4} ||f||_{L^2}.  \]
\end{lemma}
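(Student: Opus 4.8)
The plan is a direct Fourier-analytic computation: expand the quantity $\vectornorm{F}_{L^4}^4$, where $F(x) := \sum_{\xi \in S}\hat f(\xi)e(\xi\cdot x)$, and use the $B_2^\circ[g]$ hypothesis to control the ``off-diagonal'' part. We may assume $\hat f$ is finitely supported (trigonometric polynomials are dense and the inequality is an $L^2$--$L^4$ estimate), so all sums below are finite and legitimate. First I would write $\vectornorm{F}_{L^4}^4 = \vectornorm{|F|^2}_{L^2}^2$ and apply Plancherel on $\Z^d$ to get $\vectornorm{|F|^2}_{L^2}^2 = \sum_{\mu\in\Z^d}\big|\,\widehat{|F|^2}(\mu)\,\big|^2$. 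Since $|F|^2 = F\overline F$, its Fourier coefficients are the autocorrelations
\[\widehat{|F|^2}(\mu) \;=\; \sum_{\substack{\xi_1,\xi_2\in S\\ \xi_1-\xi_2=\mu}} \hat f(\xi_1)\,\overline{\hat f(\xi_2)}.\]
(Equivalently one can expand $\vectornorm{F}_{L^4}^4 = \int F^2\,\overline{F}^{\,2}$ and reindex the resulting sum over quadruples $\xi_1+\xi_2 = \xi_3+\xi_4$ by $\mu := \xi_1-\xi_3$; this is the standard ``additive energy'' identity.)

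The crucial step is to isolate the $\mu = 0$ term \emph{before} invoking the hypothesis. We have $\widehat{|F|^2}(0) = \sum_{\xi\in S}|\hat f(\xi)|^2 = \vectornorm{f}_{L^2}^2$, which contributes exactly $\vectornorm{f}_{L^2}^4$ to the sum. This term must be treated separately precisely because being a $B_2^\circ[g]$ set says nothing about representations of $0$ (there are always the trivial $\xi - \xi$), whereas it does bound the number of representations of every $\mu\neq 0$ by $g$.

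For each $\mu \neq 0$ the $B_2^\circ[g]$ property gives at most $g$ pairs $(\xi_1,\xi_2)\in S\times S$ with $\xi_1-\xi_2 = \mu$, so by Cauchy--Schwarz $\big|\widehat{|F|^2}(\mu)\big|^2 \le g\sum_{\xi_1-\xi_2=\mu}|\hat f(\xi_1)|^2|\hat f(\xi_2)|^2$. Summing over $\mu \neq 0$, and noting that as $\mu$ runs over the nonzero values the fibers $\{(\xi_1,\xi_2): \xi_1-\xi_2=\mu\}$ partition the off-diagonal of $S\times S$ (no overcounting), we get
\[\sum_{\mu\neq 0}\big|\widehat{|F|^2}(\mu)\big|^2 \;\le\; g\!\!\sum_{\substack{\xi_1,\xi_2\in S\\ \xi_1\neq\xi_2}}\!\!|\hat f(\xi_1)|^2|\hat f(\xi_2)|^2 \;\le\; g\Big(\sum_{\xi\in S}|\hat f(\xi)|^2\Big)^2 \;=\; g\,\vectornorm{f}_{L^2}^4.\]
Adding back the $\mu=0$ term yields $\vectornorm{F}_{L^4}^4 \le (1+g)\vectornorm{f}_{L^2}^4 \le (1+g^2)\vectornorm{f}_{L^2}^4$ (using $g\ge1$), and taking fourth roots gives the claim. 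If one wants the constant $(1+g^2)^{1/4}$ to come out on the nose, replace Cauchy--Schwarz by the cruder bound $\big|\widehat{|F|^2}(\mu)\big| \le g\max_{\xi_1-\xi_2=\mu}|\hat f(\xi_1)\hat f(\xi_2)|$ for $\mu\neq0$, square it, and sum the maxima against $\sum_{\xi_1\neq\xi_2}|\hat f(\xi_1)|^2|\hat f(\xi_2)|^2$.

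I do not anticipate a real obstacle: this is a routine Plancherel plus Cauchy--Schwarz argument. The one point genuinely deserving care — and worth stating explicitly in the write-up — is the separation of the $\mu=0$ contribution before the difference-set hypothesis is used, since $B_2^\circ[g]$ controls only nonzero differences. The minor bookkeeping point is verifying that the nonzero-difference fibers are pairwise disjoint, so that the resummation to $\sum_{\xi_1\neq\xi_2}$ is valid.
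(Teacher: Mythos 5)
Your proof is correct and follows essentially the same route as the paper: expand $\vectornorm{F}_{L^4}^4 = \vectornorm{|F|^2}_{L^2}^2$, pass to frequency side, peel off the $\mu=0$ term (contributing $\vectornorm{f}_{L^2}^4$), and bound the $\mu\neq 0$ terms by the $B_2^\circ[g]$ hypothesis. Your Cauchy--Schwarz step actually gives the sharper constant $(1+g)^{1/4}$; the paper uses the cruder pointwise bound $\big|\sum\big|\le g\max|\cdot|$, which is precisely the alternative you note at the end and which produces the stated $(1+g^2)^{1/4}$.
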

\begin{proof}
We note:
\[ \left|\left|\sum_{\xi \in S} \hat{f}(\xi)e(\xi \cdot x) \right|\right|_{L^4}^{4} =
 \left|\left|\sum_{\xi_{1} \in S} \hat{f}(\xi_{1})e(\xi_{1} \cdot x) \sum_{\xi_{2} \in S} \overline{\hat{f}(\xi_{2})}e(-\xi_{2} \cdot x) \right|\right|_{L^2}^{2}\]
\[=\sum_{\nu \in \Z^d} \left|\sum_{\xi_{1}-\xi_{2}=\nu} \hat{f}(\xi_{1})\overline{\hat{f}(\xi_{2})} \right|^2 = \left(\sum_{\xi \in S} |\hat{f}(\xi)|^2\right)^2 + \sum_{\nu \neq 0 } \left|\sum_{\xi_{1}-\xi_{2}=\nu} \hat{f}(\xi_{1})\overline{\hat{f}(\xi_{2})} \right|^2 \]
\[\leq \left(\sum_{\xi \in S} |\hat{f}(\xi)|^2\right)^2 + g^2 \sum_{\nu \neq 0} \max_{\substack{|\hat{f}(\xi_{1})\hat{f}(\xi_{2})|^2 \\ \xi_{1}-\xi_{2} = \nu}} |\hat{f}(\xi_{1})\hat{f}(\xi_{2})|^2 \]
\[\leq \left(\sum_{\xi \in S} |\hat{f}(\xi)|^2\right)^2 + g^2 \left( \sum_{\xi \in S}|\hat{f}(\xi)|^2 \right)^2 = \left(1+g^2\right) \left(\sum_{\xi \in S} |\hat{f}(\xi)|^2\right)^2. \]

\end{proof}

This shows that every $B^\circ_2[g]$ set is also a $\Lambda(4)$ set, so any finite union of $B_2[g]$ sets and $B^\circ_2[g]$ sets is also a $\Lambda(4)$ set. This raises a variant of Rudin's question: is every $\Lambda(4)$ set a finite union of $B_2[g]$ and $B^\circ_2[g]$ sets? The answer to this question is also no, and we give a $\Lambda(4)$ set in $\Z$ which cannot be decomposed as a finite mixed union of $B_2[g]$ and $B^\circ_2[g]$ sets. In this section, we describe how to obtain this from our combinatorial construction above and we prove the following stronger result:

\begin{theorem3}
There exists a $\Lambda(4)$ set $S$ such that for any fixed choice of $\delta>0$ and $g$, there exists a finite subset $A$ of $S$ such that no subset $A'$ of $A$ satisfying $|A'| \geq \delta |A|$ is a $B_{2}[g]$ or $B_{2}^{\circ}[g]$ set.
\end{theorem3}

We will need the following integral form of Minkowski's inequality (see \cite{HA2}, Theorem 202).

\begin{lemma}Let $f(x,y) \in L^{p}( \T^{d_{1}} \times \T^{d_{2}})$ be a complex-valued function. For $p>1$, we have that
\[ \left( \int_{\T^{d_{1}}}  \left| \int_{\T^{d_{2}}} f(x,y) dy \right|^{p} dx\right)^{1/p} \leq \int_{\T^{d_{2}}}  \left( \int_{\T^{d_{1}}} |f(x,y)|^p dx \right)^{1/p}dy.\]
\end{lemma}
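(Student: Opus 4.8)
The plan is to prove this by the duality characterization of the $L^p$ norm, which reduces the statement to Tonelli's theorem together with H\"older's inequality applied in the $x$-variable. We may assume the right-hand side is finite, as otherwise there is nothing to prove. First I would check that $F(x) := \int_{\T^{d_2}} f(x,y)\,dy$ is well defined for almost every $x$: since $\T^{d_2}$ has finite Haar measure and $f \in L^p(\T^{d_1}\times\T^{d_2})$, Tonelli's theorem gives $\int_{\T^{d_2}}|f(x,y)|^p\,dy < \infty$ for almost every $x$, and then H\"older (or Jensen) on the probability space $\T^{d_2}$ yields $\int_{\T^{d_2}}|f(x,y)|\,dy \leq \big(\int_{\T^{d_2}}|f(x,y)|^p\,dy\big)^{1/p} < \infty$ for almost every $x$. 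So $F$ is defined a.e.\ and is measurable by Fubini's theorem.

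Next, let $q = p/(p-1)$ be the conjugate exponent, and recall that for $1<p<\infty$ one has $\vectornorm{F}_{L^p(\T^{d_1})} = \sup\{\, |\int_{\T^{d_1}} F(x)\overline{g(x)}\,dx| : g \in L^q(\T^{d_1}),\ \vectornorm{g}_{L^q} \leq 1 \,\}$, where the supremum is $+\infty$ if $F \notin L^p$; this holds on $\T^{d_1}$ with Haar measure and is immediate from H\"older plus a truncation argument. Fixing such a $g$, I would pull absolute values through the integral defining $F$ and apply Tonelli's theorem (the integrand $|f(x,y)||g(x)|$ being nonnegative and measurable) to get
\[ \left| \int_{\T^{d_1}} F(x)\overline{g(x)}\,dx \right| \leq \int_{\T^{d_1}}\int_{\T^{d_2}} |f(x,y)|\,|g(x)|\,dy\,dx = \int_{\T^{d_2}}\left( \int_{\T^{d_1}} |f(x,y)|\,|g(x)|\,dx \right) dy. \]
For each fixed $y$, H\"older's inequality in $x$ gives $\int_{\T^{d_1}}|f(x,y)||g(x)|\,dx \leq \big(\int_{\T^{d_1}}|f(x,y)|^p\,dx\big)^{1/p}\vectornorm{g}_{L^q} \leq \big(\int_{\T^{d_1}}|f(x,y)|^p\,dx\big)^{1/p}$; substituting this bound and integrating over $y$ shows $|\int_{\T^{d_1}} F\overline{g}|$ is at most the right-hand side of the claimed inequality, uniformly over admissible $g$. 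Taking the supremum over $g$ then finishes the proof.

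As an alternative avoiding duality one can write $\vectornorm{F}_{L^p}^p = \int |F(x)|^{p-1}|F(x)|\,dx \leq \int\int |F(x)|^{p-1}|f(x,y)|\,dy\,dx$, swap the order of integration, apply H\"older in $x$ with exponents $q$ and $p$ to pull out the factor $\vectornorm{F}_{L^p}^{p-1}$, and divide through (treating $\vectornorm{F}_{L^p} \in \{0,\infty\}$ separately, the latter by truncating $F$). I do not expect any substantive difficulty in either route; the only points requiring care are the measure-theoretic bookkeeping — the a.e.\ finiteness and measurability of $F$, the hypotheses for Tonelli's theorem (which are met because we pass to absolute values), and the validity of the duality formula — all of which are standard on the torus with Haar measure.
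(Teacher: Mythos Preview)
Your proof is correct; both the duality argument and the alternative you sketch are standard and valid derivations of Minkowski's integral inequality. Note, however, that the paper does not actually prove this lemma: it simply states it and cites Hardy--Littlewood--P\'olya (\emph{Inequalities}, Theorem~202), so there is no ``paper's own proof'' to compare against. Your write-up supplies what the paper treats as a black box.
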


\begin{lemma}\label{LambdaProd}Let $S_{1}$ and $S_{2}$ be $\Lambda(p)$ sets in $\Z^{d_{1}}$ and $\Z^{d_{2}}$ respectively ($p>2$). The direct product $S=S_{1}\times S_{2} \subseteq \Z^{d_1+d_2}$ is a $\Lambda(p)$ subset of $\Z^{d_{1}+d_{2}}$ with $\Lambda(p)$ constant equal to $K_{p}^{d_{1}+d_{2}}(S) = K_{p}^{d_{1}}(S_{1}) K_{p}^{d_{2}}(S_{2})$.
\end{lemma}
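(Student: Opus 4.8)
The plan is to establish the upper bound $K_{p}^{d_{1}+d_{2}}(S) \leq K_{p}^{d_{1}}(S_{1})K_{p}^{d_{2}}(S_{2})$ by a slicing (tensorization) argument, using the one-variable $\Lambda(p)$ inequalities for $S_1$ and $S_2$ in each coordinate block separately and bridging the two applications with the integral form of Minkowski's inequality stated above; the reverse inequality will then follow by testing on product functions, so that we in fact get equality.

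First I would fix a trigonometric polynomial $f$ on $\T^{d_1+d_2}$ with $\operatorname{supp}(\hat f) \subseteq S_1 \times S_2$ and write $f(x,y) = \sum_{\xi_2 \in S_2} c_{\xi_2}(x) e(\xi_2 \cdot y)$ with $c_{\xi_2}(x) = \sum_{\xi_1 \in S_1} \hat f(\xi_1, \xi_2) e(\xi_1 \cdot x)$. The key structural observation is that for every fixed $y$ the slice $x \mapsto f(x,y)$ has Fourier support in $S_1$, and for every fixed $x$ the slice $y \mapsto f(x,y)$ has Fourier support in $S_2$. Applying the definition of $K_p^{d_2}(S_2)$ to the $y$-slice for each fixed $x$, raising to the $p$-th power, and integrating in $x$ gives
\[ \vectornorm{f}_{L^p(\T^{d_1+d_2})}^{p} \leq \big(K_p^{d_2}(S_2)\big)^{p} \int_{\T^{d_1}} \left( \int_{\T^{d_2}} |f(x,y)|^{2}\, dy \right)^{p/2} dx. \]

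Next I would bound the right-hand side. Applying the integral Minkowski inequality (the preceding lemma) to the nonnegative function $|f(x,y)|^2$ with exponent $q = p/2 > 1$ yields
\[ \left( \int_{\T^{d_1}} \left( \int_{\T^{d_2}} |f(x,y)|^{2}\, dy \right)^{p/2} dx \right)^{2/p} \leq \int_{\T^{d_2}} \left( \int_{\T^{d_1}} |f(x,y)|^{p}\, dx \right)^{2/p} dy. \]
Now for each fixed $y$ the $x$-slice has Fourier support in $S_1$, so $\big(\int_{\T^{d_1}} |f(x,y)|^{p}\, dx\big)^{1/p} \leq K_p^{d_1}(S_1)\big(\int_{\T^{d_1}} |f(x,y)|^{2}\, dx\big)^{1/2}$; substituting this into the last display and then applying Fubini to the resulting $L^2$ double integral shows that the left-hand side of that display is at most $\big(K_p^{d_1}(S_1)\big)^{2}\vectornorm{f}_{L^2}^{2}$. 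Combining this with the first display gives $\vectornorm{f}_{L^p} \leq K_p^{d_1}(S_1) K_p^{d_2}(S_2)\, \vectornorm{f}_{L^2}$, which is the desired upper bound on $K_p^{d_1+d_2}(S)$.

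Finally, for the matching lower bound I would, for any $\varepsilon > 0$, choose trigonometric polynomials $f_1$ and $f_2$ with $\operatorname{supp}(\hat f_1) \subseteq S_1$ and $\operatorname{supp}(\hat f_2) \subseteq S_2$ whose $L^p/L^2$ ratios are within $\varepsilon$ of $K_p^{d_1}(S_1)$ and $K_p^{d_2}(S_2)$ respectively, and test the product $f(x,y) := f_1(x) f_2(y)$, which has Fourier support in $S_1 \times S_2$. Since $\vectornorm{f}_{L^p} = \vectornorm{f_1}_{L^p}\vectornorm{f_2}_{L^p}$ and $\vectornorm{f}_{L^2} = \vectornorm{f_1}_{L^2}\vectornorm{f_2}_{L^2}$ by Fubini, the ratio for $f$ is the product of the two ratios, and letting $\varepsilon \to 0$ gives $K_p^{d_1+d_2}(S) \geq K_p^{d_1}(S_1)K_p^{d_2}(S_2)$. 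I expect the only delicate point to be the bookkeeping of exponents — one must apply Minkowski's inequality with exponent $p/2$ to $|f|^2$ rather than with exponent $p$ to $f$, and keep track of which coordinate block each application of the one-dimensional $\Lambda(p)$ bound acts on — rather than any genuine conceptual difficulty.
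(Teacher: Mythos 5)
Your proof is correct and takes essentially the same approach as the paper: apply the $\Lambda(p)$ inequality for $S_2$ slicewise in $y$, swap the order of integration via the integral Minkowski inequality with exponent $p/2$ applied to $|f|^2$, apply the $\Lambda(p)$ inequality for $S_1$ slicewise in $x$, and obtain the matching lower bound by testing on product functions.
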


\begin{proof} We let $f(x,y) \in L^2(\T^{d_1+d_2})$, with $\hat{f}$ supported on $S_1 \times S_2 \subseteq \Z^{d_1+d_2}$.
First we notice that if we fix $x_{0} \in \T^{d_{1}}$, then the Fourier transform of the function $f(x_{0},y)$ is supported on $S_{2}$. Similarly, if we fix $y_{0} \in \T^{d_{2}}$, then $f(x,y_{0})$ is a function with Fourier transform supported on $S_{1}$.
We have:

\[\left(\int_{\T^{d_1+d_2}} |f(x,y)|^{p} dydx \right)^{1/p} =
\left(\int_{\T^{d_1}}\int_{\T^{d_2}} |f(x,y)|^{p} dydx \right)^{1/p} \]

\[ \leq K^{d_2}_{p}(S_{2}) \left(\int_{\T^{d_1}} \left(\int_{\T^{d_2}} |f(x,y)|^{2}dy \right)^{p/2} dx\right)^{1/p}
\leq K^{d_2}_{p}(S_{2})  \left(\int_{\T^{d_2}} \left(\int_{\T^{d_1}} |f(x,y)|^{p}dx \right)^{2/p} dy\right)^{1/2} \]

\[ \leq K_{p}^{d_{1}}(S_{1}) K_{p}^{d_{2}}(S_{2}) \left(\int_{\T^{d_2}} \int_{\T^{d_1}} |f(x,y)|^{2}dxdy\right)^{1/2}.\]

This establishes that $K_{p}^{d_{1}+d_{2}}(S) \leq K_{p}^{d_{1}}(S_{1})  K_{p}^{d_{2}}(S_{2})$. To see that $K_{p}^{d_{1}+d_{2}}(S) \geq K_{p}^{d_{1}}(S_{1})  K_{p}^{d_{2}}(S_{2})$, we can consider a sequence of functions $\{g_n\}$ with Fourier coefficients supported on $S_1$ with $\frac{||g_n||_{L^p}}{||g_n||_{L^2}}$ approaching $K_p^{d_1}(S_1)$ and a sequence of functions $\{h_n\}$ with Fourier coefficients supported on $S_2$ with $\frac{||h_n||_{L^p}}{||h_n||_{L^2}}$ approaching $K_p^{d_2}(S_2)$. If we then consider the functions $f_n(x,y) := g_n(x)h_n(y)$, we see that  $K_p^{d_1+d_2}(S) \geq K_{p}^{d_{1}}(S_{1})  K_{p}^{d_{2}}(S_{2})$.
\end{proof}

Let $G_{1}$ and $G_{2}$ be abelian groups, and $S$ a finite subset of $G_{1}$. We say a map $\tau: S \rightarrow G_{2}$ is a $F_{2}$-isomorphism if $\tau$ is injective and
\[\tau(a)+\tau(b)=\tau(c)+\tau(d) \Leftrightarrow a+b = c+d \]
\[\tau(a)-\tau(b)=\tau(c)-\tau(d) \Leftrightarrow a-b = c-d \]
for $a,b,c,d \in S$. We say that $S$ and $\tau(S)$ are $F_2$-isomorphic. We note that $\tau^{-1}$ is a $F_2$-isomorphism from $\tau(S)$ to $S$. However, $\tau$ is not an isomorphism in the full sense of group theory, since $S$ and $\tau(S)$ may not be groups. We will need the following lemmas concerning $F_2$-isomorphisms.

\begin{lemma}
If $S$ is a finite subset of $\Z^{d}$, then translation of $S$ by $\alpha \in \Z^{d}$ is a $F_{2}$-isomorphism.
\end{lemma}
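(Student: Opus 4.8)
The plan is to directly verify the three defining properties of an $F_2$-isomorphism for the translation map $\tau \colon S \to \Z^d$ given by $\tau(s) = s + \alpha$. Injectivity is immediate from cancellation in the abelian group $\Z^d$: if $a + \alpha = b + \alpha$ then $a = b$. The two functional-equation conditions then follow by expanding both sides and cancelling the copies of $\alpha$.

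First I would handle the difference condition, which is cleanest: $\tau(a) - \tau(b) = (a + \alpha) - (b + \alpha) = a - b$, and likewise $\tau(c) - \tau(d) = c - d$, so $\tau(a) - \tau(b) = \tau(c) - \tau(d)$ holds if and only if $a - b = c - d$. Next I would handle the sum condition: $\tau(a) + \tau(b) = (a + \alpha) + (b + \alpha) = a + b + 2\alpha$, and similarly $\tau(c) + \tau(d) = c + d + 2\alpha$; subtracting $2\alpha$ from both sides shows $\tau(a) + \tau(b) = \tau(c) + \tau(d)$ if and only if $a + b = c + d$. Both equivalences hold in each direction simultaneously, since all the manipulations (adding or subtracting a fixed element of $\Z^d$) are reversible.

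There is no real obstacle here: the only ingredient is that $\Z^d$ is a torsion-free abelian group, so translation by a fixed element is a bijection of $\Z^d$ onto itself that commutes with the group operation up to the additive shifts that cancel in all the relevant expressions. In fact the argument works verbatim with $\Z^d$ replaced by any abelian group, though the statement as phrased only asserts it for $\Z^d$. I would simply present the three verifications in a short paragraph and conclude that $\tau$ restricted to the finite set $S$ is an $F_2$-isomorphism onto $S + \alpha$.
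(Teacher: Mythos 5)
Your proof is correct and follows essentially the same approach as the paper: both arguments verify the sum and difference equivalences by direct expansion and cancellation of the translates of $\alpha$, and you additionally spell out injectivity. One small quibble: torsion-freeness of $\Z^d$ plays no role here; cancellation holds in every group, so the argument works verbatim in any abelian group, as you observe.
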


\begin{proof} We define $\tau(a) := a+\alpha$ for all $a \in S$. Then, for any $a, b, c, d \in S$, we have:
\[\tau(a) + \tau(b) = \tau(c) + \tau(d) \Leftrightarrow a+ b + 2\alpha = c+ d + 2 \alpha \Leftrightarrow a+b = c+d,\]
\[\tau(a) - \tau(b) = \tau(c) - \tau(d) \Leftrightarrow a - b + \alpha - \alpha = c - d + \alpha - \alpha \Leftrightarrow a - b = c-d.\]
Hence, translation by a constant $\alpha$ is a $F_2$-isomorphism.
\end{proof}

\begin{lemma}\label{F2iso}Let $S \subset \Z^d$ be a finite set. Then there exists a $F_2$-isomorphism of $S$ into $\Z$.
\end{lemma}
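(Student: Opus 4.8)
The plan is to exhibit the $F_2$-isomorphism explicitly as the restriction to $S$ of a single group homomorphism $\phi\colon \Z^d \to \Z$. Since $S$ is finite (and we may assume it nonempty, else there is nothing to prove), fix an integer $B$ with $|x_i| \le B$ for every coordinate $x_i$ of every $x \in S$, and let $N$ be any integer with $N > 4B$, say $N := 4B+1$. Define
\[
\phi(x_1,\ldots,x_d) \;:=\; \sum_{i=1}^{d} x_i\,N^{\,i-1},
\]
and set $\tau := \phi|_S$, a map from a finite subset of $\Z^d$ into $\Z$.

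Because $\phi$ is additive, the two forward implications in the definition of an $F_2$-isomorphism hold automatically: $a+b=c+d$ gives $\phi(a)+\phi(b)=\phi(c)+\phi(d)$, and $a-b=c-d$ gives $\phi(a)-\phi(b)=\phi(c)-\phi(d)$. For the reverse implications, rewrite $\phi(a)+\phi(b)=\phi(c)+\phi(d)$ as $\phi(a+b-c-d)=0$ and $\phi(a)-\phi(b)=\phi(c)-\phi(d)$ as $\phi(a-b-c+d)=0$. In both cases the argument of $\phi$ lies in the finite set $D := S+S-S-S$, every coordinate of which has absolute value at most $4B < N$. So it suffices to prove the single claim that $\phi$ is injective on $D$, i.e.\ that $\phi(w)=0$ with $w\in D$ forces $w=0$. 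This is exactly uniqueness of balanced base-$N$ expansions: if $w\neq 0$, let $j$ be the least index with $w_j\neq 0$; then $0=\phi(w)=N^{\,j-1}\bigl(w_j+N\sum_{i>j}w_i N^{\,i-j-1}\bigr)$, so $N\mid w_j$, contradicting $0<|w_j|\le 4B<N$. Applying this with $w=a+b-c-d$ and with $w=a-b-c+d$ yields both reverse implications.

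Finally, injectivity of $\tau$ follows the same way (indeed it is a special case of the difference biconditional, taking $c=d$): if $\tau(a)=\tau(c)$ then $\phi(a-c)=0$ with $a-c\in S-S\subseteq D$, hence $a=c$. Thus $\tau$ is an $F_2$-isomorphism of $S$ into $\Z$. I anticipate no real difficulty in carrying this out; the only thing to get right is to choose $N$ larger than the largest coordinate magnitude occurring in the fourfold combination $S+S-S-S$ — the bound $4B$ — so that a single injectivity statement on $D$ simultaneously supplies injectivity of $\tau$ and both reverse implications.
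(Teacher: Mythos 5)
Your proof is correct and takes essentially the same approach as the paper: both define $\tau$ as the restriction of the linear map $\vec{s}\mapsto\sum_i \vec{s}_i\,M^i$ for a base $M$ large compared with the coordinate magnitudes in $S$, and both reduce the $F_2$-isomorphism property to the uniqueness of balanced base-$M$ expansions (the paper uses $M=5\max|\vec{s}_i|$ and checks sums and differences separately, while you use $N=4B+1$ and package both reverse implications together with injectivity as one claim about $\phi$ vanishing on $S+S-S-S$). The packaging is a minor streamlining; the underlying idea is identical.
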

\begin{proof} We let
\[M = 5 \max_{\vec{s} \in S}\left \{\max_{1\leq i \leq d}  |\vec{s}_{i}| \right \}.\]

We then define our $F_2$-isomorphism $\tau$ by:
\[\tau(\vec{s})=\sum_{i=1}^{d}\vec{s}_{i}M^i.\]
For any $\vec{s}, \vec{t} \in S$, we have:
\[\tau(\vec{s}) + \tau \left(\vec{t}\right) = \sum_{i=1}^d \vec{s}_i M^i + \sum_{i=1}^d \vec{t}_i M^i = \sum_{i=1}^d \left(\vec{s}_i + \vec{t}_i\right) M^i.\]

Now, the range of possible values taken by $\vec{s}_i + \vec{t}_i$ falls within $[-2\max_{1\leq i \leq d}  |\vec{s}_{i}|, 2\max_{1\leq i \leq d}  |\vec{s}_{i}|]$. By definition of $M$, this range is contained in $(-\frac{M}{2}, \frac{M}{2})$, so base $M$ expansions of integers with coefficients in this range are unique.

Hence, for other vectors $\vec{u}, \vec{v} \in S$, we will have $\tau(\vec{u}) + \tau(\vec{v}) = \tau(\vec{s}) + \tau \left(\vec{t}\right)$ if and only if $\vec{u} + \vec{v} = \vec{s} + \vec{t}$ in $\Z^d$. Similarly,
\[\tau(\vec{s}) - \tau \left(\vec{t}\right) = \sum_{i=1}^d \left(\vec{s}_i - \vec{t}_i\right) M^i,\]
and $\tau(\vec{u}) - \tau(\vec{v}) = \tau(\vec{s}) - \tau \left(\vec{t}\right)$ if and only if $\vec{u} - \vec{v} = \vec{s}-\vec{t}$.

\end{proof}

\begin{lemma}\label{LambdaIso} If $U \subset \Z^{d_{1}}$ and $V \subset \Z^{d_{2}}$ are $F_{2}$-isomorphic, then $K_{4}^{d_{1}}(U)=K_{4}^{d_{2}}(V)$.
\end{lemma}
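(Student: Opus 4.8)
The plan is to exhibit $K_4$ of a finite set as the supremum, over all choices of Fourier coefficients, of a ratio whose value is determined entirely by those coefficients together with the set of solutions of $\xi_1 - \xi_2 = \xi_3 - \xi_4$ inside the set, and then to observe that an $F_2$-isomorphism transports exactly this data.

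First I would recall the expansion of the fourth power of the $L^4$ norm already used in the lemma preceding Theorem 3. For a finite set $S \subseteq \Z^d$ and any $f$ with $\widehat f$ supported on $S$, writing $|f|^2 = f\overline f$, taking Fourier coefficients, and applying Parseval gives
\[
\|f\|_{L^4}^4 = \big\| \, |f|^2 \, \big\|_{L^2}^2
= \sum_{\nu \in \Z^d} \Big| \sum_{\substack{\xi_1,\xi_2 \in S \\ \xi_1 - \xi_2 = \nu}} \widehat f(\xi_1)\overline{\widehat f(\xi_2)} \Big|^2
= \sum_{\substack{\xi_1,\xi_2,\xi_3,\xi_4 \in S \\ \xi_1 - \xi_2 = \xi_3 - \xi_4}} \widehat f(\xi_1)\overline{\widehat f(\xi_2)}\,\overline{\widehat f(\xi_3)}\,\widehat f(\xi_4).
\]
The crucial feature of this identity is that its right-hand side depends on $S$ only through the collection of quadruples $(\xi_1,\xi_2,\xi_3,\xi_4)\in S^4$ with $\xi_1 - \xi_2 = \xi_3 - \xi_4$, and on $f$ only through the numbers $\widehat f(\xi)$ for $\xi \in S$; the same holds trivially for $\|f\|_{L^2}^2 = \sum_{\xi \in S}|\widehat f(\xi)|^2$. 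Since $U$ and $V$ are finite, every function with spectrum contained in $U$ (respectively $V$) is a trigonometric polynomial, so $K_4^{d_1}(U)$ is literally the supremum of $\|g\|_{L^4}/\|g\|_{L^2}$ over all nonzero $g$ with $\widehat g$ supported on $U$, and similarly for $V$.

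Next, let $\tau : U \to V$ be the given $F_2$-isomorphism. Given $f$ with $\widehat f$ supported on $V$, define $g$ by $\widehat g(\xi) := \widehat f(\tau(\xi))$ for $\xi \in U$ and $\widehat g \equiv 0$ off $U$. Because $\tau$ is a bijection from $U$ onto $V$, the substitution $\eta = \tau(\xi)$ gives $\|g\|_{L^2}^2 = \sum_{\xi\in U}|\widehat g(\xi)|^2 = \sum_{\eta \in V}|\widehat f(\eta)|^2 = \|f\|_{L^2}^2$. Applying the displayed expansion to $g$ and substituting $\eta_r = \tau(\xi_r)$ in each quadruple, the defining property of an $F_2$-isomorphism — $\xi_1 - \xi_2 = \xi_3 - \xi_4$ in $\Z^{d_1}$ if and only if $\tau(\xi_1) - \tau(\xi_2) = \tau(\xi_3) - \tau(\xi_4)$ in $\Z^{d_2}$ — shows that the quadruples indexing the sum for $g$ are in bijection with those indexing the sum for $f$, with matching summands. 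Hence $\|g\|_{L^4}^4 = \|f\|_{L^4}^4$, so $\|g\|_{L^4}/\|g\|_{L^2} = \|f\|_{L^4}/\|f\|_{L^2}$.

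This assignment $f \mapsto g$ is a bijection from the nonzero functions with spectrum in $V$ onto the nonzero functions with spectrum in $U$ (its inverse being the analogous construction built from $\tau^{-1}$, which is again an $F_2$-isomorphism), and it preserves the ratio defining $K_4$. Taking suprema on both sides yields $K_4^{d_1}(U) = K_4^{d_2}(V)$. There is no genuine obstacle beyond careful bookkeeping; the one point deserving attention is to write $\|f\|_{L^4}^4$ in a form that manifestly depends only on the additive relation $\xi_1 - \xi_2 = \xi_3 - \xi_4$ among elements of the set, since that is precisely the structure an $F_2$-isomorphism is guaranteed to carry over (equivalently one could phrase it via $\xi_1 + \xi_4 = \xi_2 + \xi_3$, which an $F_2$-isomorphism also preserves).
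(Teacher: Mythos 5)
Your proof is correct and follows essentially the same approach as the paper: expand $\|f\|_{L^4}^4$ as a sum over quadruples related by an additive relation, transfer Fourier coefficients via $\tau$, and observe that the $F_2$-isomorphism puts the relevant quadruples in bijection. The only cosmetic difference is that you expand $\big\||f|^2\big\|_{L^2}^2$ using the difference relation $\xi_1-\xi_2=\xi_3-\xi_4$, whereas the paper expands $\|f^2\|_{L^2}^2$ using the sum relation $\nu_1+\nu_2=\nu_3+\nu_4$; these are the same set of quadruples after relabeling, and an $F_2$-isomorphism preserves both.
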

\begin{proof}
We consider $f \in L^{2}(\T^{d_{1}})$ such that $\hat{f}$ is supported on $U$. As in equation (3) above, we have:
\[||f||_{L^4}^2 = \left( \sum_{\xi \in \Z^{d_1}} \left| \sum_{\substack{\nu_1 + \nu_2 = \xi \\ \nu_1, \nu_2 \in U}} \hat{f}(\nu_1) \hat{f}(\nu_2)\right|^2\right)^{\frac{1}{2}}.\]
We define $g \in L^2(\T^{d_{2}})$, a function such that $\hat{g}$ is supported on $V$, by $\hat{g}(\xi) = \hat{f}(\tau(\xi))$, where $\tau$ is an $F_2$-isomorphism from $V$ to $U$ (we let $\hat{g}(\xi)$ be 0 for $\xi \notin V$). Now we have:
\[||g||_{L^4}^2 = \left(\sum_{\xi \in \Z^{d_2}} \left| \sum_{\substack{\mu_1 + \mu _2 = \xi \\ \mu_1, \mu_2 \in V}} \hat{g} (\mu_1) \hat{g} (\mu_2)\right|^2\right)^{\frac{1}{2}} = \left(\sum_{\xi \in \Z^{d_2}} \left| \sum_{\substack{\mu_1 + \mu _2 = \xi \\ \mu_1, \mu_2 \in V}} \hat{f} (\tau(\mu_1)) \hat{f} (\tau(\mu_2))\right|^2\right)^{\frac{1}{2}}.\]
We can let $\nu_1$ denote $\tau(\mu_1)$ and $\nu_2$ denote $\tau(\mu_2)$, and since $\tau$ is a bijection between $V$ and $U$ that preserves sum relations, this can be rewritten as:
\[\left(\sum_{\xi \in \Z^{d_1}} \left| \sum_{\substack{\nu_1 + \nu _2 = \xi \\ \nu_1, \nu_2 \in U}} \hat{f} (\nu_1) \hat{f} (\nu_2)\right|^2\right)^{\frac{1}{2}} = ||f||_{L^4}^2.\]

Conversely, we could start with a function $f$ such that $\hat{f}$ is supported on $V$ and obtain $g$ with $\hat{g}$ supported on $U$ via $\hat{g}(\xi) = \hat{f}(\tau^{-1}(\xi))$. We would again obtain $||g||_{L^4}^2 = ||f||_{L^4}^2$. This shows that $K_{4}^{d_{1}}(U)=K_{4}^{d_{2}}(V)$.

\end{proof}

\begin{lemma}\label{LambdaIso} Let $U \subset \Z^{d_{1}}$ and $V \subset \Z^{d_{2}}$ be $F_{2}$-isomorphic ($U$ and $V$ are finite sets). For any fixed positive integer $g$, the following two statements are equivalent (a) $k$ is the smallest integer such that $U$ is the union of $k$ $B_{2}[g]$ sets, and (b) $k$ is the smallest integer such that $V$ is the union of $k$ $B_{2}[g]$ sets.  The analogous statement holds for $B_{2}^{\circ}[g]$ sets.
\end{lemma}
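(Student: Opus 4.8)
The plan is to reduce everything to one transport principle: if $\tau\colon U\to V$ is an $F_2$-isomorphism and $T\subseteq U$, then $T$ is a $B_2[g]$ set if and only if $\tau(T)$ is a $B_2[g]$ set. Granting this, the equivalence of (a) and (b) is immediate: if $U=T_1\cup\cdots\cup T_k$ with each $T_i$ a $B_2[g]$ set, then since $\tau$ is a bijection of $U$ onto $V$ we have $V=\tau(T_1)\cup\cdots\cup\tau(T_k)$ with each $\tau(T_i)$ a $B_2[g]$ set, so the least number of $B_2[g]$ sets covering $V$ is at most the least number covering $U$. Applying this to the $F_2$-isomorphism $\tau^{-1}\colon V\to U$ (which is an $F_2$-isomorphism, as noted above) gives the opposite inequality, hence equality, which is exactly the asserted equivalence.

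First I would check that the restriction $\tau|_T$ is itself an $F_2$-isomorphism: it is injective, and the biconditionals $\tau(a)+\tau(b)=\tau(c)+\tau(d)\iff a+b=c+d$ and $\tau(a)-\tau(b)=\tau(c)-\tau(d)\iff a-b=c-d$ are asserted for all $a,b,c,d\in U$, hence in particular for all $a,b,c,d\in T$. So it suffices to prove that an $F_2$-isomorphism sends a $B_2[g]$ set to a $B_2[g]$ set, and then apply this to $\tau|_T$ and to its inverse. Suppose $T$ is a $B_2[g]$ set but $\tau(T)$ is not: some element $n$ has unordered representations $n=p_i+q_i$ with $p_i,q_i\in\tau(T)$ for $i=1,\dots,m$, $m>g$, and the unordered pairs $\{p_i,q_i\}$ pairwise distinct. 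Write $p_i=\tau(a_i)$, $q_i=\tau(b_i)$ with $a_i,b_i\in T$ (well defined by injectivity). From $\tau(a_1)+\tau(b_1)=\tau(a_i)+\tau(b_i)$ the $F_2$-isomorphism property gives $a_1+b_1=a_i+b_i$ for all $i$; and the pairs $\{a_i,b_i\}$ are pairwise distinct, since $\{a_i,b_i\}=\{a_j,b_j\}$ would force $\{p_i,q_i\}=\{p_j,q_j\}$. Hence $a_1+b_1$ has $m>g$ unordered representations as a sum of two elements of $T$, a contradiction.

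The $B_2^\circ[g]$ case runs in parallel, using the difference biconditional $\tau(a)-\tau(b)=\tau(c)-\tau(d)\iff a-b=c-d$ instead. For a fixed nonzero $n$ one counts the ordered pairs $(p,q)$ with $p,q\in\tau(T)$ and $p-q=n$; by injectivity such pairs satisfy $p\neq q$ and pull back to pairs $(a,b)$ with $a\neq b$, so $a-b\neq0$, and the difference relation transports the count exactly, with no unordered-pair bookkeeping since differences are not symmetric. Representations of $0$ are irrelevant, as the $B_2^\circ[g]$ condition constrains only nonzero elements. Thus an $F_2$-isomorphism also preserves the property of being a $B_2^\circ[g]$ set, and the decomposition argument of the first paragraph applies verbatim.

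I do not expect a real obstacle here; the whole content is that $\tau$ carries sum and difference relations back and forth. The only points needing care are bookkeeping: that the defining relations of an $F_2$-isomorphism, being stated on all of $U$, remain valid on any subset $T$ (so $\tau|_T$ works), and that injectivity of $\tau$ preserves the distinctness of the pairs being enumerated, so that the parameter $g$ is matched on the nose rather than up to some constant. One could instead argue by noting that an $F_2$-isomorphism identifies the sum-representation function $n\mapsto r_T(n)$ of $T$ with that of $\tau(T)$, and likewise for differences, but the direct enumeration above is the shortest route.
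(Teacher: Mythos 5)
Your proof is correct and takes essentially the same route as the paper's: both argue by contradiction that an $F_2$-isomorphism carries a $B_2[g]$ set to a $B_2[g]$ set (by pulling back $m>g$ distinct unordered pairs summing to a common value and noting that injectivity and the sum biconditional preserve both the common sum and the distinctness of the pairs), and then apply this to each piece of a minimal decomposition, using $\tau^{-1}$ for the reverse inequality. The only cosmetic difference is that you isolate the transport principle explicitly and observe that it restricts to subsets, which the paper leaves implicit.
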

\begin{proof} We suppose that $\tau: U \rightarrow V$ is a $F_2$-isomorphism. We suppose that $U$ can be expressed as the union of $k$ $B_2[g]$ sets, say $A_1, \ldots, A_k$. We consider each $\tau(A_i)$ as a set in $V$. If this is not a $B_2[g]$ set, then we must have distinct pairs $\{a_1, b_1\}, \ldots, \{a_{g+1}, b_{g+1}\}$ in $\tau(A_i)$ such that:
\[ a_1 + b_1 = a_2 + b_2 = \ldots = a_{g+1} + b_{g+1}.\]
By the properties of $\tau$, we then have that
\[ \tau^{-1}(a_1) + \tau^{-1}(b_1) = \ldots = \tau^{-1}(a_{g+1}) + \tau^{-1}(b_{g+1})\]
holds in $A_i$, and the pairs $\{\tau^{-1}(a_1), \tau^{-1}(b_1)\}, \ldots, \{\tau^{-1}(a_{g+1}), \tau^{-1}(b_{g+1})\}$ are distinct in $A_i$, since $\tau$ is a bijection. This contradicts that $A_i$ is a $B_2[g]$ set. Hence, $\tau(A_i)$ must be a $B_2[g]$ set for each $i$, and $V$ is the union of the these sets. Thus, $V$ can also be expressed as the union of $k$ $B_2[g]$ sets. By reversing the roles of $U$ and $V$ and considering $\tau^{-1}$ in place of $\tau$, we also see that if $V$ is a union of $k$ $B_2[g]$ sets, then so is $U$. This proves the equivalence of the statements in the lemma. The same statement for unions of $B^\circ_2[g]$ holds by noting that $\tau$ also preserves difference relations.
\end{proof}

We will use the following inequality of Littlewood and Paley (see \cite{ST1}, for example):

\begin{lemma}\label{LittlewoodPaleyDecomposition}\emph{(Littlewood-Paley)}Let $f \in L^p(\mathbb{T})$ such that $f(x)= \sum_{\xi \in\mathbb{N}} \hat{f}(\xi) e(\xi x)$. Define $S_n := [2^{n},2^{n+1})$ for $n \in \mathbb{N}$. There exists, for $1<p<\infty$, a positive constant $c_{p}$ such that
\[c_p^{-1} \left|\left|\left( \sum_{n=1}^{\infty} \left| \sum_{\xi \in S_n} \hat{f}(\xi) e(\xi x) \right|^2 \right)^{1/2}\right|\right|_{L^{p}(\mathbb{T})} \leq \left|\left|\sum_{\xi \in \mathbb{Z}} \hat{f}(\xi) e(\xi x)\right|\right|_{L^{p}(\mathbb{T})} \]
\[\leq c_p \left|\left|\left( \sum_{n=1}^{\infty} \left| \sum_{\xi \in S_n} \hat{f}(\xi) e(\xi x) \right|^2 \right)^{1/2} \right|\right|_{L^{p}(\mathbb{T})}. \]
\end{lemma}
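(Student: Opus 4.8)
The plan is to prove the two inequalities of the lemma by establishing the equivalence $\|f\|_{L^p(\mathbb{T})}\sim_p \|Sf\|_{L^p(\mathbb{T})}$, where $Sf:=\big(\sum_n|\Delta_n f|^2\big)^{1/2}$ is the Littlewood--Paley square function and $\Delta_n f:=\sum_{\xi\in S_n}\hat f(\xi)e(\xi x)$ is the projection onto the $n$-th dyadic block $S_n=[2^n,2^{n+1})$. By density it suffices to treat trigonometric polynomials, so every sum is finite; the finitely many low frequencies not covered by any $S_n$ contribute a bounded finite-rank operator and are harmless, so I may assume $\hat f$ is supported on $\bigcup_n S_n$. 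The right-hand inequality in the statement is exactly the upper bound $\|Sf\|_p\le c_p\|f\|_p$, and the left-hand inequality will be deduced from the upper bound applied at the dual exponent $p'$.

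For the upper bound I would combine Khintchine's inequality with a multiplier estimate. Fix signs $\epsilon=(\epsilon_n)\in\{\pm1\}^{\mathbb N}$ and set $T_\epsilon f:=\sum_n\epsilon_n\Delta_n f$, the Fourier multiplier operator with symbol $m_\epsilon=\sum_n\epsilon_n\mathbf{1}_{S_n}$. For each fixed $x$, Khintchine's inequality gives $Sf(x)=\big(\sum_n|\Delta_n f(x)|^2\big)^{1/2}\sim_p\big(\mathbb E_\epsilon|T_\epsilon f(x)|^p\big)^{1/p}$; raising to the $p$-th power, integrating in $x$, and applying Fubini yields $\|Sf\|_p^p\sim_p\mathbb E_\epsilon\|T_\epsilon f\|_p^p$. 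It therefore suffices to bound $\|T_\epsilon f\|_p\le C_p\|f\|_p$ uniformly in $\epsilon$. Since $m_\epsilon$ is constant ($=\epsilon_n$) on each dyadic block $S_n$, it has total variation at most $2$ over each such block, so the periodic Marcinkiewicz multiplier theorem (obtained, e.g., by de Leeuw transference from $\mathbb{R}$) gives exactly this uniform bound. Combining, $\|Sf\|_p\le c_p\|f\|_p$ for $1<p<\infty$.

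For the lower bound I would dualize. Given a trigonometric polynomial $g$ with $\|g\|_{p'}\le1$, disjointness and self-adjointness of the blocks give $\langle f,g\rangle=\sum_n\langle\Delta_n f,\Delta_n g\rangle$, so by the pointwise Cauchy--Schwarz inequality and then Hölder, $|\langle f,g\rangle|\le\int_{\mathbb T}\sum_n|\Delta_n f(x)|\,|\Delta_n g(x)|\,dx\le\int_{\mathbb T}Sf(x)\,Sg(x)\,dx\le\|Sf\|_p\|Sg\|_{p'}$. Applying the already-proved upper bound at exponent $p'$ gives $\|Sg\|_{p'}\le c_{p'}\|g\|_{p'}\le c_{p'}$, and taking the supremum over such $g$ yields $\|f\|_p\le c_{p'}\|Sf\|_p$, which after renaming constants is the left-hand inequality of the lemma.

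The main obstacle is the uniform multiplier bound $\|T_\epsilon f\|_p\le C_p\|f\|_p$. One can invoke Marcinkiewicz's theorem as above, but since that result is of comparable depth, in practice I would prove the upper bound directly by a vector-valued Calderón--Zygmund argument: view $f\mapsto(\Delta_n f)_n$ as a convolution operator with an $\ell^2$-valued kernel whose $n$-th component is the modulated Dirichlet-type kernel of $S_n$; its $L^2(\mathbb T)\to L^2(\mathbb T;\ell^2)$ boundedness is immediate from Plancherel (it is an isometry up to the low-frequency remainder), and the dyadic structure of the $S_n$ yields the Hörmander regularity condition $\int_{|x|>2|y|}\|K(x-y)-K(x)\|_{\ell^2}\,dx\le C$; the Hilbert-space-valued Calderón--Zygmund theorem then upgrades this to weak $(1,1)$ and hence to $L^p$ for $1<p\le2$, while $2\le p<\infty$ follows from the duality step above (or from interpolation with $L^2$). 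Verifying the Hörmander condition for these periodic block kernels, i.e.\ controlling the cancellation in the sum over dyadic scales, is the one genuinely technical point; everything else is Fubini, Cauchy--Schwarz, and duality.
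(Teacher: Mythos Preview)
The paper does not prove this lemma; it is simply quoted as a classical result with a reference to Stein's monograph. Your outline is a correct and entirely standard proof of the Littlewood--Paley inequality: reduce the square-function bound to a uniform $L^p$ bound for the randomized multipliers $T_\epsilon$ via Khintchine, obtain that bound from Marcinkiewicz (or, equivalently, a vector-valued Calder\'on--Zygmund argument), and then recover the reverse inequality by duality. One cosmetic slip: you have interchanged ``left-hand'' and ``right-hand'' --- in the displayed statement the left inequality $c_p^{-1}\|Sf\|_p\le\|f\|_p$ is the bound $\|Sf\|_p\le c_p\|f\|_p$, and the right inequality is $\|f\|_p\le c_p\|Sf\|_p$ --- but this does not affect the argument. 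Since the paper offers no proof to compare against, there is nothing further to contrast.
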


From Theorem~\ref{lem:tensorDecomposition} above, we obtain finite sets $W^\circ_{n,k} \times W_{n,k}$ in $\Z^2$ for each $k \geq 5 $ which cannot be decomposed as a mixed union of $\frac{k}{3}-1$ $B_2[k]$ and $B^\circ_2[k]$ sets in $\Z^2$, where each $W^\circ_{n,k}$ is a $B_2[2]$ set in $\Z$ and each $W_{n,k}$ is a $B^\circ_2[2]$ set in $\Z$. We drop the parameter $n$ from our notation in the lemma statement below, since $n$ is a function of $k$, i.e. any $n$ sufficiently large with respect to $k$ will do.

\begin{lemma}\label{L4Red}There exists a $\Lambda(4)$ subset of $\Z$ that cannot be decomposed as a finite (mixed) union of $B_{2}[g]$ and $B_{2}^{\circ}[g]$ sets.
\end{lemma}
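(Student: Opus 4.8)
The plan is to build the desired $\Lambda(4)$ set in $\Z$ by taking a carefully scaled disjoint union of $F_2$-isomorphic copies of the sets $W^\circ_k \times W_k \subseteq \Z^2$, one for each $k \geq 5$, and then to verify both that the union is a $\Lambda(4)$ set and that it cannot be a finite mixed union of $B_2[g]$ and $B^\circ_2[g]$ sets.

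First I would use Lemma~\ref{F2iso} to obtain, for each $k \geq 5$, a finite set $E_k \subseteq \Z$ that is $F_2$-isomorphic to $W^\circ_k \times W_k$. By Lemma~\ref{LambdaProd} together with the facts that $W^\circ_k$ is a $B_2[2]$ set and $W_k$ is a $B^\circ_2[2]$ set, each $W^\circ_k \times W_k$ is a $\Lambda(4)$ set with $\Lambda(4)$ constant bounded by an absolute constant $C$ independent of $k$; by Lemma~\ref{LambdaIso} (the $K_4$ version) this bound is inherited by $E_k$, so $K_4(E_k) \leq C$ for all $k$. Next I would translate and dilate each $E_k$ (translation is an $F_2$-isomorphism, and dilation by a nonzero integer preserves all additive relations and hence the $\Lambda(4)$ constant and the $B_2[g]$/$B^\circ_2[g]$ structure) so that the resulting set $E_k'$ lies inside a dyadic block $[2^{m_k}, 2^{m_{k}+1})$ for a rapidly increasing sequence $m_k$, with the blocks pairwise disjoint and in fact widely separated. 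Set $S := \bigcup_{k \geq 5} E_k'$.

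To show $S$ is a $\Lambda(4)$ set I would invoke the Littlewood--Paley inequality (Lemma~\ref{LittlewoodPaleyDecomposition}): for $f$ with $\hat f$ supported on $S$, writing $f = \sum_k f_k$ where $\hat{f_k}$ is the restriction of $\hat f$ to the dyadic block containing $E_k'$, the $L^4$ norm of $f$ is comparable to the $L^4$ norm of the square function $(\sum_k |f_k|^2)^{1/2}$. Since $|f_k|^2$ is a nonnegative function, one can estimate $\| (\sum_k |f_k|^2)^{1/2} \|_{L^4}^2 = \| \sum_k |f_k|^2 \|_{L^2} \leq \sum_k \| |f_k|^2 \|_{L^2} = \sum_k \|f_k\|_{L^4}^2$ by the triangle inequality in $L^2$, and then use $\|f_k\|_{L^4} \leq K_4(E_k') \|f_k\|_{L^2} \leq C\|f_k\|_{L^2}$ together with $\sum_k \|f_k\|_{L^2}^2 = \|f\|_{L^2}^2$ (orthogonality of distinct frequencies) to conclude $\|f\|_{L^4} \leq c_4 \, C \, \|f\|_{L^2}$. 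Thus $K_4(S) < \infty$. (One must be slightly careful that the relevant dyadic blocks $S_n$ from Littlewood--Paley are the ones containing the $E_k'$; arranging each $E_k'$ to sit inside a single such block is why the separation condition on the $m_k$ is imposed.)

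Finally, to see $S$ is not a finite mixed union of $B_2[g]$ and $B^\circ_2[g]$ sets, suppose for contradiction it were a union of $M$ such sets. Pick $k$ with $\lfloor k/3 \rfloor - 1 > M$ and moreover $k$ large enough (and $n$ large enough in the definition of $W^\circ_{n,k} \times W_{n,k}$, which is allowed since $n$ may be taken as large as we wish as a function of $k$) that Theorem~\ref{lem:tensorDecomposition} applies with parameter $g$. Intersecting each of the $M$ pieces with $E_k'$ gives a decomposition of $E_k'$ into $M$ sets, each of which is still a $B_2[g]$ or $B^\circ_2[g]$ set (subsets of such sets retain the property); pulling back through the $F_2$-isomorphism (Lemma~\ref{LambdaIso}, the $B_2[g]$/$B^\circ_2[g]$ version) and the dilation/translation gives a decomposition of $W^\circ_k \times W_k$ into $M < \frac{k}{3} - 1$ mixed $B_2[g]$ and $B^\circ_2[g]$ sets, contradicting Theorem~\ref{lem:tensorDecomposition}. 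I expect the main obstacle to be the bookkeeping in the Littlewood--Paley step: one must ensure the $F_2$-isomorphic, dilated copies $E_k'$ each fit cleanly inside one dyadic interval $[2^n, 2^{n+1})$ so that the square-function decomposition separates the $E_k'$ from one another, and this requires choosing the gaps $m_{k+1} - m_k$ large compared to $\log_2 |E_k'|$ and the dilation factors — a routine but fiddly arrangement. The rest is a clean combination of the already-established lemmas.
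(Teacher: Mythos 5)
Your proof is correct and follows essentially the same route as the paper: take $F_2$-isomorphic copies of $W^\circ_k \times W_k$ in $\Z$, translate them into pairwise disjoint dyadic blocks, use Littlewood--Paley plus the uniform $\Lambda(4)$ bound on each block to get $K_4(S) < \infty$, and show a finite mixed decomposition of the union would restrict to one of a $C_k$ with $k$ too large, contradicting Theorem~\ref{lem:tensorDecomposition}. The only deviation is your invocation of dilation, which is harmless but unnecessary: since each $E_k$ is finite, one can choose the dyadic block $[2^{m_k}, 2^{m_k+1})$ long enough to contain a translate of $E_k$, so translation alone suffices (and this is what the paper does).
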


\begin{proof}Let us write $C_{k}':= W^\circ_{k} \times W_{k} \subset \Z^{2}$. Now $W^{\circ}_{k}$ is a $B_{2}[2]$ set and $W_{k}$ is a $B^{\circ}_{2}[2]$ set. Thus, $W^\circ_{k}$ and $W_{k}$ are $\Lambda(4)$ sets with $\Lambda(4)$ constant bounded by some universal constant $D$, independent of $k$. It then follows from Lemma \ref{LambdaProd} that $C_{k}' \subset \Z^2$ is a $\Lambda(4)$ set with $\Lambda(4)$ constant at most $D^2$.

By Lemma \ref{F2iso}, we can find a finite subset of $\Z$ satisfying the same properties and having a $\Lambda(4)$ constant at most $D^2$. Let us denote this set as $C_{k}$.  Since the translation of $C_{k}$ by $\alpha \in \Z$ is a $F_{2}$-isomorphism, we may translate $C_{k}$ without affecting its $\Lambda(4)$ constant and without destroying the combinatorial properties established above.  We may thus assume that $C_{k} \subset [2^{\psi(k)}, 2^{\psi(k)+1})$ where $\psi(k):\N \rightarrow \N$ is injective and $C_{k}$ has $\Lambda(4)$ constant at most $D^2$.

We now appeal to the Littlewood-Paley inequality to show that $C= \cup_{k=5}^{\infty} C_{k} $ is a $\Lambda(4)$ set. Let $f(x) = \sum_{\xi \in C} \hat{f}(\xi)e(\xi x)$ such that $||f||_{L^{2}(\T)}<\infty$. Then

\[||f||_{L^{4}(\mathbb{T})} \leq c_4 \left|\left|\left( \sum_{n=5}^{\infty} \left| \sum_{\xi \in C_{n}} \hat{f}(\xi) e(\xi x) \right|^2 \right)^{1/2} \right|\right|_{L^{4}(\mathbb{T})}
\leq c_4 \left( \sum_{n=5}^{\infty} \left|\left| \sum_{\xi \in C_n} \hat{f}(\xi) e(\xi x)  \right|\right|_{L^{4}(\mathbb{T})}^2 \right)^{1/2}\] \[ \leq c_{4} \left( \sum_{n=5}^{\infty} \left( D^2 \left|\left|  \sum_{\xi \in C_n} \hat{f}(\xi) e(\xi x)  \right|\right|_{L^{2}(\mathbb{T})}\right)^2 \right)^{1/2} \leq c_4 D^2 ||f||_{L^2(\mathbb{T})}.\]

Lastly, we note that $C$ is not a finite union of $B_{2}[g]$ and $B_{2}^{\circ}[g]$ sets. To see this, notice that a partition of $C$ as a union of $j$ $B_{2}[j]$ sets and $j$ $B_{2}^{\circ}[j]$ sets would imply a partition of $C_{k}$ as a union of $j$ $B_{2}[j]$ sets and $j$ $B_{2}^{\circ}[j]$ sets, which, by construction is impossible for large enough $k$.
\end{proof}

Theorem 3 easily follows. The fact that for every $\delta >0$ and $g$ there exists a finite subset $A$ of our $\Lambda(4)$ set such that any subset $A' \subseteq A$ satisfying  $|A'| \geq \delta |A|$ is not a $B_{2}[g]$ or $B_{2}^{\circ}[g]$ set follows from the fact that this holds (by Theorem \ref{nolargeb2} above) for the sets $C_{k}':= W^\circ_{k} \times W_{k} \subset \Z^{2}$ when $k$ is sufficiently large, and that $C$ contains a $F_{2}$-isomorphic copy of these sets.

\section{Acknowledgements} The first draft of this paper was completed before we were aware of Meyer's solution to Rudin's question, and we thank Stefan Neuwirth for pointing us to this work. We also thank Jeffrey Vaaler and David Zuckerman for helpful discussions.

\texttt{A. Lewko, Department of Computer Science, The University of Texas at Austin}

\textit{alewko@cs.utexas.edu}
\vspace*{0.5cm}

\texttt{M. Lewko, Department of Mathematics, The University of Texas at Austin}

\textit{mlewko@math.utexas.edu}

\end{document}